\RequirePackage{amsthm}
\documentclass[pdflatex,sn-mathphys-num]{sn-jnl}

\usepackage{graphicx}%
\usepackage{amsmath,amssymb,amsfonts}%
\usepackage{amsthm,mathtools}%
\usepackage{mathrsfs}%
\usepackage{bm}%
\usepackage[dvipsnames]{xcolor}%
\usepackage{booktabs}%
\usepackage{algorithm}%
\usepackage{comment}

\usepackage{newfloat}
\DeclareFloatingEnvironment[name=Algorithm]{algofloat}

\theoremstyle{thmstyleone}%
\newtheorem{theorem}{Theorem}
\newtheorem{proposition}[theorem]{Proposition}%
\newtheorem{lemma}[theorem]{Lemma}%
\newtheorem{corollary}[theorem]{Corollary}%

\theoremstyle{thmstyletwo}%
\newtheorem{example}{Example}%
\newtheorem{remark}{Remark}%

\theoremstyle{thmstylethree}%
\newtheorem{definition}{Definition}%

\DeclareFontFamily{OT1}{pzc}{}
\DeclareFontShape{OT1}{pzc}{m}{it}{<-> s * [1.10] pzcmi7t}{}
\DeclareMathAlphabet{\mathpzc}{OT1}{pzc}{m}{it}

\newcommand{\wid}[1]{\mathrm{w}\left( #1 \right)}

\begin{document}

\title[Relaxation via Separable Estimators]{Relaxation via Separable Estimators: Arithmetic and Implementation}


\author[1]{\fnm{Yanlin} \sur{Zha}}\email{y.zha18@imperial.ac.uk}
\author[2]{\fnm{Mario Eduardo} \sur{Villanueva}}\email{me.villanueva@imtlucca.it}
\author[3]{\fnm{Boris} \sur{Houska}}\email{borish@shanghaitech.edu.cn}
\author*[1]{\fnm{Beno\^it} \sur{Chachuat}}\email{b.chachuat@imperial.ac.uk}


\affil*[1]{\orgdiv{The Sargent Centre for Process Systems Engineering, Department of Chemical Engineering}, \orgname{Imperial College London}, \orgaddress{\city{London}, \country{UK}}}

\affil[2]{\orgdiv{DYSCO Unit}, \orgname{IMT School for Advanced Studies Lucca}, \orgaddress{\city{Lucca}, \country{Italy}}}

\affil[3]{\orgdiv{School of Information Science and Technology}, \orgname{ShanghaiTech University}, \orgaddress{\city{Shanghai}, \country{China}}}


\abstract{
This article presents an arithmetic, called superposition relaxation, for bracketing the graph of a multivariate factorable function on a compact domain between a pair of underestimating and overestimating functions that are both separable. Propagation rules are established for affine and nonlinear composition operations, with a focus on exploiting global monotonicity and convexity properties in the composition. The local convergence properties of this arithmetic are also analyzed in both the pointwise and Hausdorff sense, including conditions under which quadratic pointwise convergence propagates through composition. Parameterizations of the univariate summands in a superposition relaxation either as piecewise-constant or continuous piecewise-linear functions are discussed for a practical implementation. It is shown through numerical case studies that superposition relaxations can be consistently tighter than McCormick relaxations, including for the relaxation of artificial neural networks. But superposition relaxations also incur a higher computational cost than McCormick relaxations. Further investigations are thus warranted as applications in global optimization seek to balance a relaxation's tightness with its computational cost.
}

\keywords{Computer algebra, Set arithmetic, Superposition relaxation, Convergence analysis, Global optimization}

\pacs[MSC Classification]{26B40, 41A30, 90C26, 90C59}

\maketitle

\section{Introduction}
\label{sec:introduction}

Advances in algorithmic and computing technology over several decades have made it possible to solve nonconvex optimization problems of practical relevance to global optimality within reasonable computational time. Spatial branch-and-bound (sBB) is one of the most prominent complete-search methods and has been implemented in state-of-the-art global solvers such as {\sf BARON} \citep{Tawarmalani2005}, {\sf SCIP} \citep{Vigerske2017} and, more recently, {\sf GUROBI} \citep{gurobi}. It entails the successive partitioning of the decision variable domain in combination with the repeated construction of convex or concave relaxations for the participating objective and constraint functions, so as to exclude subdomains where a global optimum cannot be found \citep{Horst1996,Neumaier2004}. 

The prevalent relaxation approach in sBB hinges on the factorable programming technique \citep{McCormick1976}, where each function is treated as a recursive sum and/or product of univariate functions, introducing auxiliary variables and constraints for these intermediate factors, then relaxing any nonconvex auxiliary constraints, typically by constructing a polyhedral outer-approximation. This approach can guarantee the asymptotic convergence of the sBB algorithm for a broad class of optimization problems \citep{Horst1996,Tawarmalani2002}. However, factorable programming relaxations are subject to the cluster problem \citep{Du1994,Wechsung2014}, where the sBB algorithm can visit a large number of small boxes in the vicinity of a global optimum. Furthermore, the complexity of this polyhedral relaxation increases with the number of nonlinear factors in the objective and constraint functions, and a large number of factors can also cause a dependency problem that weakens the relaxations. A prototypical example of this are optimization problems embedding artificial neural networks, which quickly become intractable to global optimality with the factorable programming technique as the numbers of hidden layers and/or neurons in each layer increase \citep{Schweidtmann2019}. To enhance the convergence rate, it is customary to supplement the relaxations with bounds-tightening techniques and tailored branching strategies \citep{Belotti2009,Gleixner2017}. Significant research has furthermore been devoted to strengthening factorable programming relaxations by detecting convex/concave or componentwise convex/concave subexpressions \citep{Fourer2010,Misener2012}; exploiting special function structures such as multilinear or polynomial subexpressions \citep{Rikun1997,Sherali2012,Zorn2014,Bao2015}; and developing relaxation hierarchies \citep{Sherali1999,Lasserre2001,Parrilo2003,Ahmadi2019}.

Complementing the classical factorable programming technique, factorable reduced-space formulations \cite{Eperly1997,Mitsos2009,Kearfott2013} exploit special structures to reduce the dimensionality of the optimization problem that is exposed to the sBB algorithm, thereby taming the exponential growth of branch-and-bound trees. Any variable for which an explicit relationship with other variables exists through an equality constraint in the original problem can be eliminated by substitution. This strategy can deliver significant speedups, e.g., in optimization with neural networks embedded \citep{Schweidtmann2019} or algorithms with finitely-many steps \citep{Mitsos2009}, flowsheet optimization \citep{Bongartz2019}, and superstructure optimization \citep{Burre2023}. It can also be extended to optimization problems that embed non-factorable functions, including nonlinear algebraic and differential equations \citep{Singer2006,Stuber2015,Scott2015,Villanueva2015a}. 

McCormick relaxations \citep{McCormick1976} have so far been the method of choice in reduced-space formulations and are implemented in global solvers such as {\sf MAiNGO} \citep{MAiNGO} and {\sf EAGO} \citep{Wilhelm2022}. Although fast to compute and quadratically convergent under mild regularity assumptions \citep{Bompadre2012}, McCormick relaxations are not known in closed form, necessitating their repeated calculations at given points. They are furthermore nonsmooth, needing subgradient propagation in practice \citep{Mitsos2009}. And like classical factorable programming relaxations, McCormick relaxations may produce weak relaxations with a large number of intermediate factors, which can induce a cluster problem \citep{Wechsung2014}. Higher-order inclusion techniques such as Taylor and Chebyshev models~\cite{Ratschek1984,Berz1997,Bompadre2013,Rajyaguru2017} provide alternative arithmetics that do not introduce auxiliary variables and describe under- and overestimators in closed form. But the polynomial approximant is generally a nonconvex function that needs to be relaxed as well in a sBB implementation, which may add to the overestimation. Moreover, the computational cost of these estimators grows combinatorially with the degree of the polynomial approximant or the number of variables and the remainder bounds is often overconservative as the variable domain is enlarged.

In this paper, we propose a new type of relaxation, called superposition relaxation, to enclose the graph of a multivariate factorable function between a pair of separable under- and overestimators. A key advantage over McCormick relaxations, where the under- and overestimators are respectively convex and concave by construction, is that the univariate summands in the under- and overestimator of a superposition relaxation can capture global nonlinearities and nonconvexities in the original function. Moreover, these summands can be parameterized in such a way that their extrema can be recovered both efficiently and exactly, for instance as continuous piecewise-linear functions; then, adding up the summand bounds yields the exact range of the superposition relaxation. Due to their separable structure, the computational cost of superposition relaxations is essentially linear in the number of variables. Precursor work includes the interval superposition arithmetic \citep{Zha2018,Su2019}, where the estimators are parameterized by a matrix with interval components and the superposition relies on adding selected components to form an enclosure of the function's graph. The present superposition relaxations are much more general in the sense that they combine univariate summands (as opposed to interval partitions) and do not require the underestimator and overestimator to be symmetric.

A key contribution of the paper is an arithmetic for propagating the superposition relaxations through affine and nonlinear composition operations in a factorable function (Section~\ref{sec:arithmetic}). In particular, the composition rule between a superposition relaxation and an outer univariate function builds upon the concept of ridge function \citep{logan1975,pinkus2015} and exploits global properties of the outer function such as convexity and monotonicity to construct tight under- and overestimators. Related work that exploit separable structures includes the convex relaxation of componentwise convex function \citep{Najman2019}, which was shown to be tighter than its McCormick relaxation counterpart and yields a similar rule as the one derived here for convex ridge function. Our paper takes the next step of combining this underestimator rule with an overestimator rule, then propagating these estimators through multiple operations in factorable expressions. Other contributions of the paper include an analysis of the asymptotic convergence properties of this superposition arithmetic (Section~\ref{sec:convergence}); and two practical implementations of superposition relaxation, where the univariate summands are parameterized as either piecewise constant functions over a fixed partition or continuous piecewise linear functions over an adaptive partition (Section~\ref{sec:parameterization}). Both implementations are available through the open-source library {\sf MC\textsuperscript{++}} \citep{MCPP} and its Python-binding interface {\sf PyMC}, which we use in the numerical case studies (Section~\ref{sec:casestudies}).

\section{Preliminaries and Notation}
\label{sec:background}

Besides common mathematical notation, real vectors are denoted with lowercase bold letters, as in ${\bm x}\in\mathbb{R}^n$. Uppercase letters are used to denote intervals, such as $Z\in\mathbb{IR}$, with $\mathbb{IR}$ the set of intervals in $\mathbb{R}$. $\underline{Z}$ and $\overline{Z}$ refer to the lower and upper bound of $Z$, respectively, and $\wid{Z} \coloneqq \overline{Z} - \underline{Z}$ to its width. Interval vectors, which correspond to the Cartesian product of $n$ intervals, are denoted accordingly with uppercase bold letters, as in ${\bm X}\coloneqq X_1\times\cdots\times X_n\in\mathbb{IR}^{n}$. The diameter of ${\bm X}$ is denoted by $\operatorname{diam}({\bm X}) \coloneqq \sqrt{\sum^{n}_{i=1} \wid{X_{i}}^2}$ \citep{Moore1979}. 

Given a function $f:\mathbb{R}^n\to\mathbb{R}$, we denote the image of the set $\mathcal{X}\subseteq\mathbb{R}^n$ under $f$ by $f(\mathcal{X})\coloneqq \{f({\bm x}) \mid {\bm x}\in\mathcal{X}\}$. Recall that $f(\mathcal{X})$ describes a compact set whenever $\mathcal{X}$ itself is compact and $f$ is continuous on $\mathcal{X}$. In this case, we denote the minimum and maximum of $f$ on $\mathcal{X}$ as $\underline{f}(\mathcal{X}) \coloneqq \min\{f({\bm x}) \mid {\bm x}\in\mathcal{X}\}$ and $\overline{f}(\mathcal{X}) \coloneqq \max\{f({\bm x}) \mid {\bm x}\in\mathcal{X}\}$.

A function $f:\mathbb{R}^n\to\mathbb{R}$ is called \emph{separable} if it can be decomposed as 
\begin{align*}
\forall {\bm x}\in\mathbb{R}^n, \quad f({\bm x}) = \sum_{i=1}^n f_i(x_i) 
\end{align*} 
for some univariate functions $f_i:\mathbb{R}\to\mathbb{R}$. A \emph{ridge function} \citep{logan1975,pinkus2015} is any function $f:\mathbb{R}^n\to\mathbb{R}$ that can be written as the composition of a univariate function $\varphi:\mathbb{R}\to\mathbb{R}$, called profile function, with an affine transformation,
\begin{align*}
\forall {\bm x}\in\mathbb{R}^n, \quad f({\bm x}) \coloneqq \varphi\left({\bm a}^\intercal {\bm x} + b\right)
\end{align*}
for some direction ${\bm a}\in\mathbb{R}^n$ and shift $b\in\mathbb{R}$.

We call \emph{underestimator} and \emph{overestimator} of a function $f:\mathbb{R}^n\to\mathbb{R}$ on the set $\mathcal{X}\subseteq\mathbb{R}^n$ any functions $f^{\rm u},f^{\rm o}:\mathcal{X}\to\mathbb{R}$ such that
\begin{align*}
\forall {\bm x} \in \mathcal{X}, \quad f^{\rm u}({\bm x}) \leq f({\bm x}) \leq f^{\rm o}({\bm x}).
\end{align*}
We further refer to a pair of underestimator and overestimator simply as an {\em estimator}. Specific classes of such estimators are introduced in the following section.

\section{Superposition Relaxation Arithmetic}
\label{sec:arithmetic}

A superposition relaxation is any estimator that brackets the range of an estimated function between an underestimator and an overestimator that are both separable functions. A formal definition follows.

\begin{definition}
\label{def:firstOrderSE}
Let the function $f:\mathcal{X}\to\mathbb{R}$ be defined on the domain $\mathcal{X}\subseteq \mathbb{R}^n$. A \emph{superposition relaxation} of $f$ on ${\bm X}\subseteq \mathcal{X}$, ${\bm X}\in\mathbb{IR}^n$, denoted by $(f^{\rm u},f^{\rm o})_{\bm X}$, is any pair of separable functions $f^{\rm u},f^{\rm o}:{\bm X}\to \mathbb{R}$ that bracket the variations of $f$ on ${\bm X}$,
\begin{align*}
\forall {\bm x}\in{\bm X}, \quad f^{\rm u}({\bm x}) \eqqcolon \sum_{i=1}^n f^{\rm u}_i(x_i) \leq f({\bm x}) \leq f^{\rm o}({\bm x}) \eqqcolon \sum_{i=1}^n f^{\rm o}_i(x_i),
\end{align*}
for some univariate real-valued functions $f^{\rm u}_i,f^{\rm o}_i:X_i\to\mathbb{R}$. We call the separable function $f^{\rm u}$ a \emph{superposition underestimator} and $f^{\rm o}$, a \emph{superposition overestimator}.
\end{definition}

A key property of superposition relaxations lies in their additive complexity in the number of variables $n$, enabled by the separability of $f^{\rm u}$ and $f^{\rm o}$. Another key feature is their ability to describe certain variations and nonconvexities present in the estimated function $f$ through the univariate summands $f^{\rm u}_i$ and $f^{\rm o}_i$. Yet, despite the resulting underestimator $f^{\rm u}$ and overestimator $f^{\rm o}$ being nonconvex functions, the range of a superposition relaxation can still be conveniently recovered by adding the $n$ minima and maxima of the univariate summands $f^{\rm u}_i$ and $f^{\rm o}_i$ on their respective domains,
\begin{align*}
\left[\underline{f}^{\rm u}({\bm X}), \overline{f}^{\rm o}({\bm X})\right] = \left[ \sum_{i=1}^n \underline{f}^{\rm u}_i(X_i), \sum_{i=1}^n \overline{f}^{\rm o}_i(X_i )\right] \supseteq f({\bm X}).
\end{align*}
The choice of a particular parameterization for these univariate underestimators and overestimators should be guided by practical considerations such as the computational burden and ability to calculate tight bounds on the summand range that make it inexpensive to compute the exact range of a superposition relaxation. This discussion is deferred until Section~\ref{sec:parameterization}.

By definition, the estimator $(f,f)_{\bm X}$ provides a trivial superposition relaxation for any univariate or separable function $f$. This is the case, in particular, for any affine functions. The focus in this section, therefore, is on developing an arithmetic to enable the construction of superposition relaxations for nonseparable functions that are factorable---namely, any function which can be represented by a finite combination of binary sums, binary products and outer compositions with a univariate function \citep{McCormick1976}. 

The construction of a superposition relaxation starts with an initialization procedure for the variables participating in the factorable function. Each variable $x_i\in X_i$, $i=1\ldots n$, is readily encoded by a superposition relaxation $(f^{\rm u},f^{\rm o})_{\bm X}$ with the following summands
\begin{align}
\label{eq:initialization}
f^{\rm u}_i(x_i) = f^{\rm o}_i(x_i) = x_i \quad \text{and} \quad f^{\rm u}_{j\neq i}(x_j) = f^{\rm o}_{j\neq i}(x_j) = 0 .
\end{align}

The following results for propagating superposition relaxations through basic addition and scaling operations are straightforward and stated without a proof. The result for a general affine transformation follows through finite composition of these operations.

\begin{proposition}
\label{prop:addition}
Let two functions $g,h:{\bm X}\to\mathbb{R}$ with their respective superposition relaxations $(g^{\rm u},g^{\rm o})_{\bm X}$ and $(h^{\rm u},h^{\rm o})_{\bm X}$ on ${\bm X}$. 
\begin{itemize}
\item A superposition relaxation $(f^{\rm u},f^{\rm o})_{\bm X}$ for $f=g+h$ on ${\bm X}$ is
\begin{align*}
f^{\rm u}_i(x_i) = g^{\rm u}_i(x_i)+h^{\rm u}_i(x_i) \quad \text{and} \quad
f^{\rm o}_i(x_i) = g^{\rm o}_i(x_i)+h^{\rm o}_i(x_i).
\end{align*}
\item A superposition relaxation $(f^{\rm u},f^{\rm o})_{\bm X}$ for $f=a g+b$ with $(a,b)\in\mathbb{R}_+\times\mathbb{R}$ on ${\bm X}$ is
\begin{align*}
f^{\rm u}_i(x_i) = a\: g^{\rm u}_i(x_i)+\frac{b}{n} \quad \text{and} \quad f^{\rm o}_i(x_i) = a\: g^{\rm o}_i(x_i)+\frac{b}{n}.
\end{align*}
\item A superposition relaxation $(f^{\rm u},f^{\rm o})_{\bm X}$ for $f=a g+b$ with $(a,b)\in\mathbb{R}_-\times\mathbb{R}$ on ${\bm X}$ is
\begin{align*}
f^{\rm u}_i(x_i) = a\: g^{\rm o}_i(x_i)+\frac{b}{n} \quad \text{and} \quad f^{\rm o}_i(x_i) = a\: g^{\rm u}_i(x_i)+\frac{b}{n}.
\end{align*}
\end{itemize}
\end{proposition}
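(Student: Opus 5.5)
Each of the three items follows by checking the two requirements of Definition~\ref{def:firstOrderSE}: the proposed $f^{\rm u}$ and $f^{\rm o}$ must be separable, and they must bracket $f$ pointwise on ${\bm X}$. Separability is immediate in every case. Each candidate is written as a sum over $i$ of univariate summands $f^{\rm u}_i,f^{\rm o}_i:X_i\to\mathbb{R}$. Each summand is built only from $g^{\rm u}_i$, $g^{\rm o}_i$, $h^{\rm u}_i$, $h^{\rm o}_i$ and constants, so it depends on $x_i$ alone. The substance is therefore the bracketing inequality, which I will verify pointwise for arbitrary fixed ${\bm x}\in{\bm X}$.

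\textbf{Addition.} Summing the summands over $i$ gives
\begin{align*}
f^{\rm u}({\bm x}) = \sum_{i=1}^n \left(g^{\rm u}_i(x_i)+h^{\rm u}_i(x_i)\right) = g^{\rm u}({\bm x})+h^{\rm u}({\bm x}),
\end{align*}
and likewise $f^{\rm o}({\bm x}) = g^{\rm o}({\bm x})+h^{\rm o}({\bm x})$. The two hypotheses $g^{\rm u}({\bm x})\le g({\bm x})\le g^{\rm o}({\bm x})$ and $h^{\rm u}({\bm x})\le h({\bm x})\le h^{\rm o}({\bm x})$ can then be added, which yields $f^{\rm u}({\bm x})\le f({\bm x})\le f^{\rm o}({\bm x})$.

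\textbf{Scaling and shift.} The constant $b$ is spread evenly across the summands, so that $\sum_{i=1}^n b/n = b$. This gives $f^{\rm u}({\bm x}) = a\,g^{\rm u}({\bm x})+b$ in the second item and $f^{\rm u}({\bm x}) = a\,g^{\rm o}({\bm x})+b$ in the third, with the analogous identities for $f^{\rm o}$.
\begin{itemize}
\item For $a\ge 0$, multiplying $g^{\rm u}\le g\le g^{\rm o}$ by $a$ preserves the inequalities. Adding $b$ then gives the claim.
\item For $a\le 0$, multiplying by $a$ reverses the inequalities, so $a\,g^{\rm o}({\bm x})\le a\,g({\bm x})\le a\,g^{\rm u}({\bm x})$. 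This is exactly why the roles of $g^{\rm u}$ and $g^{\rm o}$ are swapped in the third item. Adding $b$ concludes.
\end{itemize}

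There is no genuine obstacle in this argument. The only points needing care are the bookkeeping of the constant $b$, so that it is counted exactly once across the $n$ summands, and the sign-dependent swap of under- and overestimators when $a<0$. The case $a=0$ lies in both items and is consistent with either, since both then reduce to $f^{\rm u}=f^{\rm o}=b$.
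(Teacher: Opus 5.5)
Your proof is correct and is essentially the argument the paper has in mind. The paper states this proposition without proof as "straightforward," and your direct check (separability of the summands, adding or scaling the bracketing inequalities, $\sum_i b/n = b$, and swapping $g^{\rm u}$ and $g^{\rm o}$ when $a\le 0$) is exactly that routine verification.
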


The focus in the remainder of this section is on constructing superposition relaxations for product and composition operations. We proceed by first deriving valid superposition relaxations for ridge functions, both without and with monotonicity or convexity assumptions (Sections~\ref{sec:cvxridge} \& \ref{sec:ncvxridge}). These results provide key building blocks for propagating superposition relaxations through composition operations in factorable functions (Section~\ref{sec:composition}). A range reduction operation that intersect a superposition relaxation with a set of a priori bounds is also presented (Section~\ref{sec:tighten}). An asymptotic convergence analysis for the resulting superposition relaxations is detailed in Section~\ref{sec:convergence}.

\subsection{Superposition Relaxation of Convex Ridge Functions}
\label{sec:cvxridge}

The focus in this subsection is on ridge functions of the form
\begin{align}
f({\bm x}) \coloneqq \varphi\left(\sum_{i=1}^n x_i\right),
\label{eq:cvxridge}
\end{align}
where the profile function $\varphi$ is convex. The following results exploit this convexity to construct both a superposition overestimator (Proposition~\ref{prop:cvxridge_over}) and a superposition underestimator (Proposition~\ref{prop:cvxridge_under}) for such functions. Similar results for a concave profile function $\varphi$ follow by symmetry.

\begin{proposition}
\label{prop:cvxridge_over}
Let ${\bm X} \in \mathbb{IR}^n$ with $\wid{X_i}>0$ for some $i=1,\ldots,n$. Let the function $\varphi:Z\to \mathbb R$ be defined on a convex set $Z\subseteq\mathbb{R}$ such that $\sum_{i=1}^n x_i \in Z$ for all ${\bm x}\in{\bm X}$ and be convex on $Z$. Then, for all ${\bm x}\in{\bm X}$, we have
\begin{align}
\varphi\left(\sum_{i=1}^n x_i\right)
\leq \sum_{i\in \mathcal{P}}\theta_i\varphi\left(\frac{x_i-\underline{X}_i}{\theta_i}+\underline{\sigma}\right) 
= \sum_{i\in \mathcal{P}}\theta_i\varphi\left(\frac{x_i-\overline{X}_i}{\theta_i}+\overline{\sigma}\right), \label{eq:cvxridge_over}
\end{align}
with $\theta_i \coloneqq \frac{\wid{X_i}}{\sum_{j \in \mathcal{P}}\wid{X_j}}$ for all $i\in \mathcal{P} \coloneqq \left\{j=1,\ldots,n \mid \wid{X_j} > 0 \right\}$, $\underline{\sigma} \coloneqq \sum_{i=1}^n\underline{X}_i$, and $\overline{\sigma} \coloneqq \sum_{i=1}^n\overline{X}_i$.
\end{proposition}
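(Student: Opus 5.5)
The plan is to write the argument of $\varphi$ as a convex combination of points lying in $Z$ and then apply Jensen's inequality. Set $W \coloneqq \sum_{j\in\mathcal{P}}\wid{X_j}>0$. By construction $\theta_i>0$ for $i\in\mathcal{P}$ and $\sum_{i\in\mathcal{P}}\theta_i=1$. For $i\notin\mathcal{P}$ we have $x_i=\underline{X}_i=\overline{X}_i$, so $\sum_{i=1}^n x_i = \underline{\sigma} + \sum_{i\in\mathcal{P}}(x_i-\underline{X}_i)$. This gives the identity
\begin{align*}
\sum_{i=1}^n x_i = \sum_{i\in\mathcal{P}}\theta_i\left(\frac{x_i-\underline{X}_i}{\theta_i}+\underline{\sigma}\right).
\end{align*}
So the argument of $\varphi$ is a convex combination of the points $z_i \coloneqq \frac{x_i-\underline{X}_i}{\theta_i}+\underline{\sigma}$, $i\in\mathcal{P}$.

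The step that needs care is checking that each $z_i$ lies in $Z$; without this, Jensen's inequality cannot be applied on $Z$. Note that $\frac{x_i-\underline{X}_i}{\theta_i} = \frac{x_i-\underline{X}_i}{\wid{X_i}}\,W \in [0,W]$. Hence $z_i\in[\underline{\sigma},\underline{\sigma}+W]=[\underline{\sigma},\overline{\sigma}]$, using $\overline{\sigma}-\underline{\sigma}=\sum_{i=1}^n\wid{X_i}=W$. Both endpoints $\underline{\sigma}$ and $\overline{\sigma}$ are values of $\sum_i x_i$ at corners of ${\bm X}$, so they belong to $Z$ by assumption. Since $Z$ is convex, the whole interval $[\underline{\sigma},\overline{\sigma}]$ lies in $Z$. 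Convexity of $\varphi$ on $Z$ together with Jensen's inequality for finitely many points then gives the inequality in \eqref{eq:cvxridge_over}.

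For the equality in \eqref{eq:cvxridge_over}, compare the arguments term by term:
\begin{align*}
\frac{x_i-\overline{X}_i}{\theta_i}+\overline{\sigma} - \left(\frac{x_i-\underline{X}_i}{\theta_i}+\underline{\sigma}\right) = -\frac{\wid{X_i}}{\theta_i} + W = -W+W = 0 .
\end{align*}
So for every $i\in\mathcal{P}$ the two arguments coincide, and the two sums are identical summand by summand.

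Finally, I will remark that the right-hand side is separable, with summands $\theta_i\varphi(\cdot)$ for $i\in\mathcal{P}$ and zero summands otherwise. It is therefore a superposition overestimator in the sense of Definition~\ref{def:firstOrderSE}.
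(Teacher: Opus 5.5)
Your proof is correct and follows the paper's argument: the same convex-combination identity $\sum_{i=1}^n x_i=\sum_{i\in\mathcal{P}}\theta_i z_i$ combined with Jensen's inequality gives the bound, and the same observation $\wid{X_i}/\theta_i=\overline{\sigma}-\underline{\sigma}$ gives the equality. Your explicit check that each $z_i$ lies in $[\underline{\sigma},\overline{\sigma}]\subseteq Z$ fills in a step the paper leaves implicit.
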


\begin{proof}
We start by noting that, for all $i\in\mathcal{P}$ and all $x_i\in X_i$, we have
\begin{align*}
\overline{\sigma}-\underline{\sigma} = \sum_{j=1}^n \wid{X_j} = \frac{\wid{X_i}}{\theta_i} = \frac{x_i-\underline{X}_i}{\theta_i} - \frac{x_i-\overline{X}_i}{\theta_i},
\end{align*}
which implies the equality in \eqref{eq:cvxridge_over}. Next, by convexity of $\varphi$ on $Z$, Jensen's inequality gives
\begin{align*}
\forall z_i\in Z,\ i\in\mathcal{P}, \quad \varphi\left(\sum_{i\in\mathcal{P}} \alpha_i z_i\right) \leq \sum_{i\in\mathcal{P}} \alpha_i \varphi\left( z_i\right),
\end{align*}
for any $\alpha_i\geq 0$ with $\sum_{i\in\mathcal{P}} \alpha_i=1$. The result follows by choosing $\alpha_i=\theta_i$ and $z_i = \frac{x_i-\underline{X}_i}{\theta_i}+\underline{\sigma}$ for $i\in\mathcal{P}$, and noting that $\sum_{i\in\mathcal{P}} \theta_i z_i = \sum_{i=1}^n x_i$ whenever $x_i=\underline{X}_i$ for $i\notin\mathcal{P}$. 
\end{proof}

\begin{remark}
\label{rmk:cvxridge_over}
The particular weight selection $\theta_i$ in Proposition~\ref{prop:cvxridge_over} comes with the desirable property that, for every $i=1,\ldots,n$, we have
\begin{align*}
\max_{x_i\in X_i} \varphi\left(\frac{x_i-\underline{X}_i}{\theta_i}+\underline{\sigma}\right) = \max_{z\in[\underline{\sigma},\overline{\sigma}]} \varphi(z) \ = \max\{\varphi(\underline{\sigma}),\varphi(\overline{\sigma})\}.
\end{align*}
Therefore, the ridge function \eqref{eq:cvxridge} and its overestimator share the same set of maximizers on $\bm X$. More generally and irrespective of the convexity of $\varphi$, \eqref{eq:cvxridge_over} holds with equality along the diagonal line segment connecting the lower-corner $(\underline{X}_1,\ldots,\underline{X}_n)$ to the upper-corner $(\overline{X}_1,\ldots,\overline{X}_n)$. This is because any point ${\bm x}$ on this line segment is such that $x_i = \underline{X}_i + \eta(\overline{X}_i-\underline{X}_i)=\underline{X}_i+\eta\theta_i(\overline{\sigma}-\underline{\sigma})$ for some $\eta\in[0,1]$, and therefore we have
\begin{align*}
\sum_{i\in\mathcal{P}}\theta_i\varphi\left(\frac{x-\underline{X}_i}{\theta_i}+\underline{\sigma}\right) 
& = \sum_{i \in P}\theta_i\varphi\left(\eta(\overline{\sigma}-\underline{\sigma})+\underline{\sigma}\right) =  \varphi\left(\eta(\overline{\sigma}-\underline{\sigma})+\underline{\sigma}\right) = \varphi\left(\sum_{i=1}^n x_i\right).
\end{align*}
Notice also that the equality in \eqref{eq:cvxridge_over} may only hold for the particular weight selection $\theta_i \coloneqq \frac{\wid{X_i}}{\sum_{j \in \mathcal{P}}\wid{X_j}}$, although the two separate inequalities would of course remain valid for any convex combination of the weights based on Jensen's inequality.
\end{remark}

\begin{proposition}
\label{prop:cvxridge_under}
Let ${\bm X} \in \mathbb{IR}^n$ with $\wid{X_i}>0$ for some $i=1,\ldots,n$. Let the function $\varphi:Z\to \mathbb R$ be defined on a convex set $Z\subseteq\mathbb{R}$ such that $\sum_{i=1}^n x_i \in Z$ for all ${\bm x}\in{\bm X}$ and be convex on $Z$. Then, for all ${\bm x}\in{\bm X}$, we have
\begin{align}
\varphi\left(\sum_{i=1}^n x_i\right) \geq \sum_{i=1}^n\varphi\left(x_i-\underline{X}_i+\underline{\sigma}\right) - (n-1)\varphi(\underline{\sigma}) = \sum_{i\in\mathcal{P}}\left(\varphi\left(x_i-\underline{X}_i+\underline{\sigma}\right) - \left(1-\theta_i\right)\varphi(\underline{\sigma})\right), \label{eq:cvxridge_under1}\\
\varphi\left(\sum_{i=1}^n x_i\right) \geq \sum_{i=1}^n\varphi\left(x_i-\overline{X}_i+\overline{\sigma}\right) - (n-1)\varphi(\overline{\sigma}) = \sum_{i\in\mathcal{P}} \left(\varphi\left(x_i-\overline{X}_i+\overline{\sigma}\right) - (1-\theta_i)\varphi(\overline{\sigma})\right),\label{eq:cvxridge_under2}
\end{align}
with $\theta_i \coloneqq \frac{\wid{X_i}}{\sum_{j \in \mathcal{P}}\wid{X_j}}$ for all $i\in \mathcal{P} \coloneqq \left\{j=1,\ldots,n \mid \wid{X_j} > 0 \right\}$, $\underline{\sigma} \coloneqq \sum_{i=1}^n\underline{X}_i$, and $\overline{\sigma} \coloneqq \sum_{i=1}^n\overline{X}_i$.
\end{proposition}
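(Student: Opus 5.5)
We will prove \eqref{eq:cvxridge_under1} from the superadditivity of increments of a convex function; \eqref{eq:cvxridge_under2} then follows by the mirror argument. Set $d_i \coloneqq x_i-\underline{X}_i \geq 0$ for ${\bm x}\in{\bm X}$, so that $\sum_{i=1}^n x_i = \underline{\sigma} + \sum_{i=1}^n d_i$. The inequality in \eqref{eq:cvxridge_under1} is then equivalent to
\begin{align*}
\varphi\Bigl(\underline{\sigma}+\sum_{i=1}^n d_i\Bigr) - \varphi(\underline{\sigma}) \;\geq\; \sum_{i=1}^n \bigl(\varphi(\underline{\sigma}+d_i)-\varphi(\underline{\sigma})\bigr).
\end{align*}
All arguments $\underline{\sigma}+d_i$ and $\underline{\sigma}+\sum_{j\le k} d_j$ lie in $[\underline{\sigma},\overline{\sigma}]\subseteq Z$, because $Z$ is convex and contains both $\underline{\sigma}$ and $\overline{\sigma}$ (the sums at the lower and upper corners of ${\bm X}$).

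The key lemma concerns a convex $\varphi$ and nonnegative $a,b$ with the relevant points in $Z$. It states that $\varphi(z+a+b)-\varphi(z+b)\geq \varphi(z+a)-\varphi(z)$, i.e.\ that increments of a convex function are nondecreasing in their base point. We will prove it via the slope inequality for convex functions, handling $a=0$ or $b=0$ trivially. For $a,b>0$, the secant slopes over $[z,z+a]$ and $[z+b,z+a+b]$ are compared through the chain of three-point slope inequalities, splitting into the cases $b\le a$ and $b>a$. Alternatively, we can write $z+a$ and $z+b$ as convex combinations of $z$ and $z+a+b$ with weights $\lambda=b/(a+b)$ and $1-\lambda$, apply Jensen twice, and add the two inequalities. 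This second route is cleaner and is the one we will use.

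We then proceed by induction on $k$, applying the lemma with $z=\underline{\sigma}$, $a=d_k$ and $b=\sum_{j<k} d_j$. This gives
\begin{align*}
\varphi\Bigl(\underline{\sigma}+\sum_{j\le k} d_j\Bigr)-\varphi(\underline{\sigma}) \;\geq\; \varphi\Bigl(\underline{\sigma}+\sum_{j<k} d_j\Bigr)-\varphi(\underline{\sigma}) + \varphi(\underline{\sigma}+d_k)-\varphi(\underline{\sigma}).
\end{align*}
Telescoping over $k$ yields the displayed inequality. For \eqref{eq:cvxridge_under2}, we use $e_i\coloneqq x_i-\overline{X}_i\le 0$ and apply the same lemma to $\psi(t)\coloneqq\varphi(-t)$, which is also convex, anchored at $\overline{\sigma}$.

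The equalities with the sums over $\mathcal{P}$ are bookkeeping. For $i\notin\mathcal{P}$ we have $x_i=\underline{X}_i=\overline{X}_i$, so the corresponding summand equals $\varphi(\underline{\sigma})$ (respectively $\varphi(\overline{\sigma})$). The total $\sum_{i=1}^n\varphi(\cdot)-(n-1)\varphi(\underline{\sigma})$ thus becomes $\sum_{i\in\mathcal{P}}\varphi(\cdot) - (|\mathcal{P}|-1)\varphi(\underline{\sigma})$. Since $\sum_{i\in\mathcal{P}}\theta_i=1$, we can write $|\mathcal{P}|-1=\sum_{i\in\mathcal{P}}(1-\theta_i)$, which gives the stated forms. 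The only real step is the increment lemma; the rest is routine telescoping and index accounting.
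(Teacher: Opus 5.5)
Your proposal is correct and follows essentially the same route as the paper. Your increment lemma is the paper's Lemma~\ref{lem:cvxridge_under}, proved the same way (writing $z+a$ and $z+b$ as convex combinations of $z$ and $z+a+b$ and adding the two Jensen inequalities), and it is followed by the same induction on $k$ and the same bookkeeping over $\mathcal{P}$; the only cosmetic difference is that you get \eqref{eq:cvxridge_under2} by reflecting to $\psi(t)=\varphi(-t)$, whereas the paper applies the lemma with anchor $\omega=\overline{Z}$ and nonpositive increments.
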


The following lemma is instrumental to prove Proposition~\ref{prop:cvxridge_under}.

\begin{lemma}
\label{lem:cvxridge_under}
Let $\varphi:Z\to \mathbb R$ be convex on the interval $Z\in\mathbb{IR}$ and either set $\omega = \underline{Z}$ or $\omega = \overline{Z}$. Then, for all $\delta_1,\delta_2\in \mathbb R$ such that $\underline{Z} \leq \omega+\delta_1+\delta_2 \leq \overline{Z}$ and $\delta_1\delta_2\geq 0$, we have 
\begin{align}
\label{eq:cvxridge_under0}
\varphi(\omega+\delta_1+\delta_2)+\varphi(\omega)-\varphi(\omega+\delta_1)-\varphi(\omega+\delta_2) \geq 0 .
\end{align}
\end{lemma}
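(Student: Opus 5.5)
The inequality \eqref{eq:cvxridge_under0} is a discrete second-difference (supermodularity-type) property of convex functions. I plan to reduce it to the increasing-slopes characterization of convexity on an interval. The first step is to pin down the sign of the increments. Since $\omega$ is an endpoint of $Z$ and $\omega+\delta_1+\delta_2\in Z$ with $\delta_1\delta_2\geq 0$:
\begin{itemize}
\item if $\omega=\underline{Z}$, then $\delta_1+\delta_2\geq 0$, so both $\delta_1,\delta_2\geq 0$;
\item if $\omega=\overline{Z}$, both $\delta_1,\delta_2\leq 0$.
\end{itemize}
In either case $\omega+\delta_1$ and $\omega+\delta_2$ lie between $\omega$ and $\omega+\delta_1+\delta_2$. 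By convexity of $Z$ they belong to $Z$, so every term in \eqref{eq:cvxridge_under0} is well defined. This well-posedness is really the only reason the endpoint choice of $\omega$ matters.

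Next, I reduce to a single case by symmetry. The case $\omega=\overline{Z}$ follows from the case $\omega=\underline{Z}$ applied to $\psi(z)\coloneqq\varphi(-z)$ on $-Z$, which is convex, with $-\omega=\underline{(-Z)}$ and increments $-\delta_1,-\delta_2\geq 0$. So assume $\delta_1,\delta_2\geq 0$. Without loss of generality, by the symmetry of \eqref{eq:cvxridge_under0} in $\delta_1,\delta_2$, also assume $\delta_1\leq\delta_2$. If $\delta_1=0$, the expression vanishes identically, so take $0<\delta_1\leq\delta_2$.

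The core step is then to rewrite \eqref{eq:cvxridge_under0} as
\begin{align*}
\varphi(\omega+\delta_1+\delta_2)-\varphi(\omega+\delta_2)\;\geq\;\varphi(\omega+\delta_1)-\varphi(\omega).
\end{align*}
Both sides are increments of $\varphi$ over intervals of the same length $\delta_1$: $[\omega+\delta_2,\omega+\delta_1+\delta_2]$ on the left and $[\omega,\omega+\delta_1]$ on the right. The left interval is shifted to the right by $\delta_2\geq 0$. Dividing by $\delta_1>0$, the claim is that the chord slope of $\varphi$ is nondecreasing under a rightward translation of the chord. This follows from the three-chord lemma for convex functions, applied twice: slopes of chords increase in each endpoint. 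Concretely,
\[
\frac{\varphi(\omega+\delta_1)-\varphi(\omega)}{\delta_1}\leq\frac{\varphi(\omega+\delta_1+\delta_2)-\varphi(\omega)}{\delta_1+\delta_2}\leq\frac{\varphi(\omega+\delta_1+\delta_2)-\varphi(\omega+\delta_2)}{\delta_1}.
\]
Here the first inequality moves the right endpoint outward, and the second moves the left endpoint inward from $\omega$ to $\omega+\delta_2$.

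An alternative I would keep in reserve avoids slopes altogether. Write $\omega+\delta_1$ and $\omega+\delta_2$ as convex combinations of $\omega$ and $\omega+\delta_1+\delta_2$ with weights $\lambda=\delta_2/(\delta_1+\delta_2)$ and $1-\lambda$ respectively, where the weight is on $\omega$. Then apply the convexity inequality to each and add; the weights sum to one on each endpoint, which yields \eqref{eq:cvxridge_under0} directly. I expect no real obstacle. The only delicate point is the first step: the sign argument showing that intermediate points stay in $Z$, which is where $\delta_1\delta_2\geq 0$ and the endpoint choice of $\omega$ are used.
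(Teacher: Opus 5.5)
Your proof is correct, and the argument you keep ``in reserve'' is precisely the paper's proof. The paper writes $\omega+\delta_1$ and $\omega+\delta_2$ as convex combinations of $\omega$ and $\omega+\delta_1+\delta_2$ with weights $\frac{\delta_2}{\delta_1+\delta_2},\frac{\delta_1}{\delta_1+\delta_2}$ (swapped for the second point), applies the two-point convexity inequality to each, and adds. Your main route instead recasts the claim as a comparison of chord slopes over two windows of equal length $\delta_1$, and invokes the three-chord lemma twice. This gives a nice geometric reading: increments of a convex function over a fixed-length window grow as the window slides right. The cost is that it relies on slope monotonicity, which is itself derived from the same two-point inequality. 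It also requires ordering the four points, hence your sign analysis, the reflection $\psi(z)=\varphi(-z)$, and the normalization $\delta_1\le\delta_2$. The last of these is superfluous, since both three-chord applications only need $\delta_1,\delta_2>0$. The paper's direct version avoids this bookkeeping: once $\delta_1\delta_2>0$, both weights lie in $(0,1)$ whatever the common sign of $\delta_1,\delta_2$, so both endpoint cases are treated at once. Relatedly, the endpoint choice of $\omega$ is not what makes the expression well defined. The condition $\delta_1\delta_2\geq 0$ alone puts $\omega+\delta_1,\omega+\delta_2$ between $\omega$ and $\omega+\delta_1+\delta_2$. The endpoint assumption only guarantees $\omega\in Z$, and it matches how the lemma is later used with $\omega=\underline{\sigma}$.
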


\begin{proof}
Notice that \eqref{eq:cvxridge_under0} holds with equality whenever $\delta_1=0$ or $\delta_2=0$. In the other case that $\delta_1\delta_2>0$, we have $0<\frac{\delta_1}{\delta_1+\delta_2},\frac{\delta_2}{\delta_1+\delta_2}<1$ in both scenarios $\omega = \underline{Z}$ and $\omega = \overline{Z}$, and since $\varphi$ is convex on $Z$,
\begin{align*}
\varphi(\omega+\delta_1) =\ & \varphi\left( \frac{\delta_2}{\delta_1+\delta_2}\omega+\frac{\delta_1}{\delta_1+\delta_2}(\omega+\delta_1+\delta_2) \right)\\
\leq\ & \frac{\delta_2}{\delta_1+\delta_2}\varphi(\omega) + \frac{\delta_1}{\delta_1+\delta_2}\varphi(\omega+\delta_1+\delta_2),\\
\varphi(\omega+\delta_2) =\ & \varphi\left( \frac{\delta_1}{\delta_1+\delta_2}\omega+\frac{\delta_2}{\delta_1+\delta_2}(\omega+\delta_1+\delta_2) \right)\\
\leq\ & \frac{\delta_1}{\delta_1+\delta_2}\varphi(\omega) + \frac{\delta_2}{\delta_1+\delta_2}\varphi(\omega+\delta_1+\delta_2).
\end{align*}
The result in \eqref{eq:cvxridge_under0} follows by adding the previous two inequalities.
\end{proof}

\begin{proof}[Proof of Proposition~\ref{prop:cvxridge_under}]
We start by rewriting the inequality in \eqref{eq:cvxridge_under1} as
\begin{align}
\varphi\left(\underline{\sigma}+\sum_{i=1}^k \delta_i\right) \geq \sum_{i=1}^k\varphi\left(\delta_i+\underline{\sigma}\right) - (k-1)\:\varphi(\underline{\sigma}), \label{eq:cvxridge_under1b}
\end{align}
with $\delta_i \coloneqq x_i-\underline{X}_i$. We proceed by induction reasoning to establish the validity of inequality~\eqref{eq:cvxridge_under1b} for $k=n$. Note first that \eqref{eq:cvxridge_under1b} holds with equality for $k=1$. Then, assuming that inequality~\eqref{eq:cvxridge_under1b} holds for $k\geq 1$, we have
\begin{align*}
& \varphi\left(\underline{\sigma}+\sum_{i=1}^{k+1} \delta_i\right) - \sum_{i=1}^{k+1}\varphi\left(\underline{\sigma}+\delta_i\right) + k\varphi(\underline{\sigma})\\
=\ & \underbrace{\varphi\left(\underline{\sigma} + \sum_{i=1}^{k+1} \delta_i\right) + \varphi\left(\underline{\sigma}\right) - \varphi\left(\underline{\sigma} + \sum_{i=1}^{k} \delta_i\right) - \varphi\left(\underline{\sigma} + \delta_{k+1}\right)}_{\displaystyle\geq 0 \text{  by Lemma~\ref{lem:cvxridge_under}}}\\
& + \underbrace{\varphi\left(\underline{\sigma} + \sum_{i=1}^{k} \delta_i\right) + (k-1)\: \varphi\left(\underline{\sigma}\right)  - \sum_{i=1}^{k}\varphi\left(\underline{\sigma}+\delta_i\right)}_{\displaystyle\geq 0 \text{  since \eqref{eq:cvxridge_under1b} holds for $k$}}.
\end{align*}
Therefore, inequality~\eqref{eq:cvxridge_under1b} also holds for $k+1$, which proves the result for $k=n$ and the validity of the inequality in \eqref{eq:cvxridge_under1}. The equality in \eqref{eq:cvxridge_under1} follows by noting that $\sum_{i\notin\mathcal{P}} \varphi(x_i-\underline{X}_i+\underline{\sigma}) = (n-|\mathcal{P}|)\,\varphi(\underline{\sigma})$ and $\sum_{i\in\mathcal{P}} \theta_i=1$. The proof of \eqref{eq:cvxridge_under2} proceeds analogously.
\end{proof}

\begin{remark}
For a continuously-differentiable profile function $\varphi$, the result in Proposition~\ref{prop:cvxridge_under} may also be derived as a special case of Theorem~1 and Remark~2 in \citep{Najman2019}. This is because the ridge function \eqref{eq:cvxridge} is componentwise convex as $x_i\mapsto f(x_1^\ast,\ldots,x_{i-1}^\ast,x_i,x_{i+1}^\ast,\ldots,x_n^\ast)$ is itself convex for any fixed $x_j^\ast\in X_j$, $j\neq i$. However, unlike in \citep{Najman2019}, the proof of Proposition~\ref{prop:cvxridge_under} does not impose any first-order continuity condition on $\varphi$ on top of convexity.
\end{remark}

\begin{remark}
\label{rmk:cvxridge_under}
When $\varphi$ is nondecreasing, the superposition underestimator in \eqref{eq:cvxridge_under1} shares the same minimizer as the ridge function \eqref{eq:cvxridge} on $\bm X$ at the lower corner $(\underline{X}_1,\ldots,\underline{X}_n)$. More generally and irrespective of the convexity of $\varphi$, \eqref{eq:cvxridge_under1} holds with equality along each edge of the interval box ${\bm X}$ with $x_i\in X_i$ and $x_{j\neq i}=\underline{X}_j$,
\begin{align*}
\sum_{j=1}^n\varphi\left(x_j-\underline{X}_j+\underline{\sigma}\right) - (n-1)\varphi(\underline{\sigma}) = \varphi\left(x_i-\underline{X}_i+\underline{\sigma}\right) = \varphi\left(x_i+\sum_{\substack{j=1,\\j\neq i}}^n \underline{X}_j\right).
\end{align*}
The ridge function \eqref{eq:cvxridge} and its superposition underestimator in \eqref{eq:cvxridge_under1} and  thus match along $n$ edges of ${\bm X}$, which define a cone pointing at the lower corner $(\underline{X}_1,\ldots,\underline{X}_n)$. Symmetrically when $\varphi$ is nonincreasing, the superposition underestimator in \eqref{eq:cvxridge_under2} shares the same minimizer as \eqref{eq:cvxridge} at the upper corner $(\overline{X}_1,\ldots,\overline{X}_n)$ and they match along those $n$ edges of ${\bm X}$ defining a cone pointing at the upper corner $(\overline{X}_1,\ldots,\overline{X}_n)$.
\end{remark}

It follows from Propositions~\ref{prop:cvxridge_over} and \ref{prop:cvxridge_under} that a superposition relaxation $(f^{\rm u},f^{\rm o})_{\bm X}$ for the ridge function \eqref{eq:cvxridge} with a convex profile function $\varphi$ is obtained with the following summands for all $i\in\mathcal{P}$
\begin{align}
f^{\rm u}_i(x_{i}) \coloneqq\ & \varphi\left(x_{i}-\underline{X}_i+\underline{\sigma}\right) - \left(1-\theta_i\right)\varphi(\underline{\sigma})\label{eq:monotonic_underest}\\
f^{\rm o}_i(x_{i}) \coloneqq\ & \theta_i\varphi\left(\frac{x_{i}-\underline{X}_i}{\theta_i}+\underline{\sigma}\right), \label{eq:monotonic_overest}
\end{align}
and $f^{\rm u}_i(x_{i})=f^{\rm o}_i(x_{i})=0$ for $i\notin\mathcal{P}$. In particular, the order between each pair of underestimator and overestimator summands $(f^{\rm u}_i,f^{\rm o}_i)_{1\leq i\leq n}$ is preserved due to the convexity of $\varphi$,
\begin{align*}
\forall x_{i}\in X_i, \quad f^{\rm u}_i(x_{i}) \leq f^{\rm o}_i(x_{i}),
\end{align*}
with equality at $f^{\rm u}_i(\underline{X}_i) = f^{\rm o}_i(\underline{X}_i)=\theta_i\varphi(\underline{\sigma})$. Notice also that a superposition underestimator for a convex (respectively, concave) profile function $\varphi$ consists of convex (respectively, concave) summands only; and since the perspective $\varphi_\theta:z\mapsto \theta\varphi(z/\theta)$ is again convex (respectively, concave) for any given $\theta>0$ \citep{Hiriart2001}, a superposition overestimation too consists of convex (respectively, concave) summands only. 

Regarding extremum-tracking properties, it readily follows from Remark~\ref{rmk:cvxridge_over} that the superposition overestimator $f^{\rm o}$ in \eqref{eq:monotonic_overest} satisfies $\overline{f}^{\rm o}({\bm X}) = \overline{f}({\bm X})$; while it follows from Remark~\ref{rmk:cvxridge_under}, the superposition underestimator $f^{\rm u}$ in \eqref{eq:monotonic_underest} satisfies $\underline{f}^{\rm u}({\bm X}) = \underline{f}({\bm X})$ either when $\varphi$ is nondecreasing and the summands $f^{\rm u}_i$ are constructed via \eqref{eq:cvxridge_under1}, or when $\varphi$ is nonincreasing and the summands $f^{\rm u}_i$ are constructed via \eqref{eq:cvxridge_under2}. 

However, Remark~\ref{rmk:cvxridge_under} also suggests that a nonzero estimation gap $\underline{f}^{\rm u}({\bm X}) < \underline{f}({\bm X})$ should be expected in the nonmonotonic case. This occurs specifically when the minimum of $\varphi$ on $[\underline{\sigma},\overline{\sigma}]$ does not belong to any of the edges originating from the lower or upper corner of the domain $\bm X$. 
A simple remedy approach entails decomposing any nonmonotonic function $\varphi$ into the sum of a convex nonincreasing term, a convex nondecreasing term, and a constant term, as follows
\begin{equation}
\label{eq:monotonic_decomp}
\varphi(z) = \varphi(\min\{z,\sigma^\ast\}) + \varphi(\max\{z,\sigma^\ast\}) - \varphi(\sigma^\ast),
\end{equation}
then relaxing the convex nondecreasing and convex nonincreasing terms based on the inequalities \eqref{eq:cvxridge_under1} and \eqref{eq:cvxridge_under2}, respectively. This approach yields the following summands in the superpositon underestimator $f^{\rm u}$ of a nonmonotonic convex function $\varphi$,
\begin{align}
\label{eq:nonmonotonic_underest}
f^{\rm u}_i(x_{i}) \coloneqq\ & \varphi\left(\min\left\{x_{i}-\overline{X}_i+\overline{\sigma},\sigma^\ast\right\}\right) + \varphi\left(\max\left\{x_{i}-\underline{X}_i+\underline{\sigma},\sigma^\ast\right\}\right) - (2-\theta_i)\varphi(\sigma^\ast),
\end{align}
for $i\in\mathcal{P}$, and $f_i^{\rm u}(x_{i})=0$ otherwise. In particular, $f^{\rm u}$ with the summands \eqref{eq:nonmonotonic_underest} is equal to the minimum $\varphi(\sigma^\ast)$ of the ridge function \eqref{eq:cvxridge} at any ${\bm x}\in{\bm X}$ with $x_i\in[\sigma^\ast+\overline{X}_i-\overline{\sigma},\sigma^\ast+\underline{X}_i-\underline{\sigma}]$. It should also be noted that the composition of $\varphi$ with $\min$ and $\max$ functions as in \eqref{eq:monotonic_decomp} does not impede the quadratic convergence of the underestimation error around $\sigma^\ast$ as long as $\varphi$ is differentiable and its derivative is Lipschitz continuous on $[\underline{\sigma},\overline{\sigma}]$---see Section~\ref{sec:convergence} for further details.

\begin{example}
\label{exm:sqr}
We illustrate the previous developments for the ridge function $f({\bm x})=(x_1+x_2)^2$. On the domain ${\bm X}=[0.2,1]^2$, the term $(x_1+x_2)$ varies in the range $[\underline{\sigma},\overline{\sigma}] = [0.4,2]$, where the function $\varphi:z\mapsto z^2$ is convex and nondecreasing. A superposition relaxation that tracks the range of $(x_1+x_2)^2$, therefore, consists of the following summands:
\begin{align*}
f_1^{\rm u}(\xi) =\ & f_2^{\rm u}(\xi) = (\xi+0.2)^2-0.08\\
f_1^{\rm o}(\xi) =\ & f_2^{\rm o}(\xi) = 2\xi^2.
\end{align*}
On the wider domain ${\bm X}=[-1,2]^2$, the term $(x_1+x_2)$ now varies in the range $[\underline{\sigma},\overline{\sigma}] = [-2,4]$, where the function $\varphi:z\mapsto z^2$ is convex and reaches its minimum at $\sigma^\ast=0$. A superposition relaxation that again tracks the range of $(x_1+x_2)^2$, consists of the following summands:
\begin{align*}
f_1^{\rm u}(\xi) =\ & f_2^{\rm u}(\xi) = \max\{\xi-1,0\}^2\\
f_1^{\rm o}(\xi) =\ & f_2^{\rm o}(\xi) = 2\xi^2.
\end{align*}
The superposition overestimation thus remains the same on the wider domain, but not the superposition underestimator. On both domains, however, the superposition relaxation correctly tracks the extrema of the ridge function as illustrated in Figure~\ref{fig:example1}.
\end{example}

\begin{figure}[htb]
\centering
\includegraphics[width=0.48\linewidth]{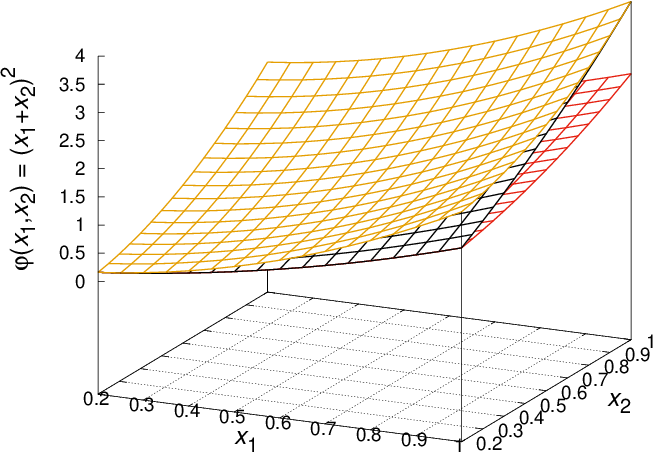}%
\hfill%
\includegraphics[width=0.48\linewidth]{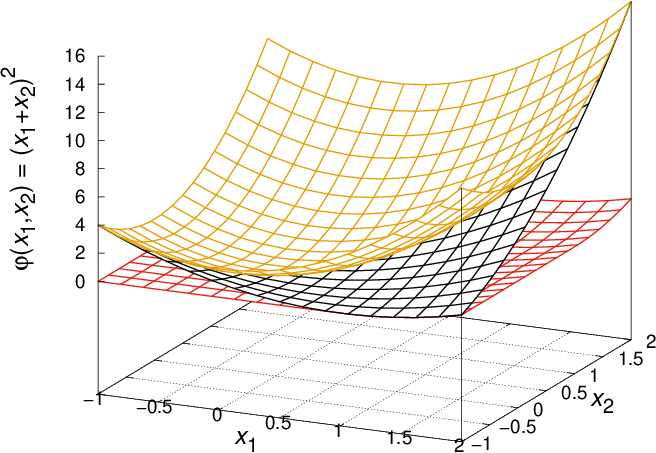}
\caption{Superposition relaxations of the ridge function $f({\bm x})=(x_1+x_2)^2$ on the domains ${\bm X}=[0.2,1]^2$ (left plot) and ${\bm X}=[-1,2]^2$ (right plot). The ridge function is shown in back lines, the superposition underestimator in red lines, and the superposition overestimator in orange lines. }
\label{fig:example1}
\end{figure}

Regarding the optimality of the superposition relaxations of convex ridge functions finally, we conduct a numerical investigation in Appendix~\ref{app:optimality}. The results of this investigation suggest that the summands \eqref{eq:monotonic_overest} may provide an optimal overestimator in the $L_1$ sense, among all possible separable
overestimators of the convex increasing ridge function \eqref{eq:cvxridge}. For a nondecreasing profile function $\varphi$, the summands \eqref{eq:monotonic_underest} may also provide an optimal underestimator both in the $L_1$ and $L_2$ sense, among all possible separable underestimators of the convex increasing ridge function \eqref{eq:cvxridge} that track its exact minimum. However, this optimality may no longer hold for a nonmonotonic profile function $\varphi$ when $f^{\rm u}$ is constructed with the summands \eqref{eq:nonmonotonic_underest}. A more formal analysis of these optimality properties should be conducted as part of future research.

\subsection{Superposition Relaxation of Nonconvex Ridge Functions}
\label{sec:ncvxridge}

With capability to construct superposition relaxations for the ridge function \eqref{eq:cvxridge} with a convex (or concave) profile function $\varphi$, we now shift focus to the case of a nonconvex profile function. One approach to handling a nonconvex $\varphi$ is through its decomposition as a difference of convex (DC) terms, then adding the superposition relaxations of both terms using the results in Section~\ref{sec:cvxridge}. In particular, recall that any twice-continuously differentiable function is DC decomposable and every continuous function can be approximated by a DC function with any desired precision \citep{hartman1959,horst1999}, albeit these decompositions are not unique.

For the nonconvex profile functions of interest, including $\varphi:z\mapsto z^{2p+1}$ with $p\geq 1$, $\varphi:z\mapsto \tanh(z)$ or $\varphi:z\mapsto \cos(z)$, the inflection points on the interval $[\underline{\sigma},\overline{\sigma}]$ are either known or can be computed easily to any finite precision. Formally, any twice-continuously differentiable function $\varphi$ on $[\underline{\sigma},\overline{\sigma}]$ with $m\geq 1$ inflection points, $\underline{\sigma}<\sigma_1<\cdots<\sigma_m<\overline{\sigma}$ can be decomposed as
\begin{equation}
\label{eq:ncvxdecomp}
\varphi(z) = \sum_{k=0}^m\varphi_{k}(z)
\end{equation}
with
\begin{align*}
\varphi_{0}(z) \coloneqq \left\{ \begin{array}{ll}
  \varphi(z) & \text{if $z \leq \sigma_1$}\\
  \varphi'(\sigma_1)\:(z- \sigma_1) + \varphi(\sigma_{1}) & \text{if $z > \sigma_1$,} 
\end{array}\right.
\end{align*}
for $k=1,\ldots, m-1$,
\begin{align*}
\varphi_{k}(z) \coloneqq \left\{ \begin{array}{ll}
  0 & \text{if $z < \sigma_k$}\\
  \varphi(z) - \varphi_{k-1}(z) & \text{if $\sigma_k\leq z\leq \sigma_{k+1}$}\\
  \varphi'(\sigma_{k+1})\:(z-\sigma_{k+1}) + \varphi(\sigma_{k+1}) - \varphi_{k-1}(z) & \text{if $z > \sigma_{k+1}$,}
\end{array}\right.
\end{align*}
and
\begin{align*}
\varphi_{m}(z) \coloneqq \left\{ \begin{array}{ll}
  0 & \text{if $z < \sigma_m$}\\
  \varphi(z) - \varphi_{m-1}(z) & \text{if $z\geq \sigma_m$,}
  \end{array}\right.
\end{align*}
where $\varphi'(\sigma_k)$ denotes the first derivative of $\varphi$ at $\sigma_k$. In particular, the factors $\varphi_k$ are twice-continuously differentiable on $[\underline{\sigma},\overline{\sigma}]$ by construction. A superposition relaxation of \eqref{eq:ncvxdecomp} is then obtained by simply adding superposition relaxations of the factors $\varphi_k$. Notice that $\varphi_1,\ldots,\varphi_m$ are all increasing convex or decreasing concave functions, while $\varphi_0$ will be nonmonotonic when $\varphi$ itself happens nonmonotonic on $[\underline{\sigma},\sigma_1]$. A similar decomposition is also possible when $\varphi$ is nonsmooth---but still Lipschitz-continuous---at any inflection point $\sigma_k$, in which case $\varphi'(\sigma_k)$ is taken as the smallest (respectively, largest) subgradient in the subdifferential of $\varphi$ at $\sigma_k^-$ if $\varphi$ is convex (respectively, concave) on $[\sigma_{k-1},\sigma_k]$. In general, summing the superposition relaxations for factors $\varphi_k$ in the decomposition \eqref{eq:ncvxdecomp} removes any guarantees that the resulting superposition relaxation will track the extrema of the estimated ridge function.

\begin{example}
\label{exm:tanh}
We consider the ridge function $f({\bm x})=\tanh(x_1+x_2)$ on the domain ${\bm X}=[-0.5,1]^2$. The profile function $\varphi:z\mapsto \tanh(z)$ is nonconvex on the corresponding range $[\underline{\sigma},\overline{\sigma}] = [-1,2]$, with a unique inflection point at $\sigma_1=0$. We proceed by decomposing $\varphi$ as in \eqref{eq:ncvxdecomp},
\begin{align*}
\varphi(z) = \underbrace{\max\{\tanh(z),z\}}_{\displaystyle \eqqcolon \varphi_0(z)} + \underbrace{\min\{\tanh(z)-z,0\}}_{\displaystyle \eqqcolon \varphi_1(z)},
\end{align*}
then summing superposition relaxations for the two terms, which are respectively convex increasing and concave increasing on $[-1,2]$:
\begin{align*}
f_1^{\rm u}(\xi) =\ & f_2^{\rm u}(\xi) = \max\{\tanh(\xi-0.5),\xi-0.5\} + \frac{1}{2}\min\{\tanh(2\xi)-2\xi,0\} + \frac{1}{2}\tanh(1)\\
f_1^{\rm o}(\xi) =\ & f_2^{\rm o}(\xi) = \frac{1}{2}\max\{\tanh(2\xi),2\xi\} + \min\{\tanh(\xi-0.5)-\xi+0.5,0\}.
\end{align*}
Notice, in particular, how the superposition underestimator on the left plot of Figure~\ref{fig:example2} correctly tracks the minimum of the ridge function at $x_1=x_2=-0.5$, while the superposition overestimator becomes conservative. The latter is caused by a dependency problem, despite the superposition overestimators of $\varphi_0(x_1+x_2)$ and $\varphi_1(x_1+x_2)$ both tracking the corresponding maxima exactly.
\end{example}

\begin{figure}[htb]
\centering
\includegraphics[width=0.48\linewidth]{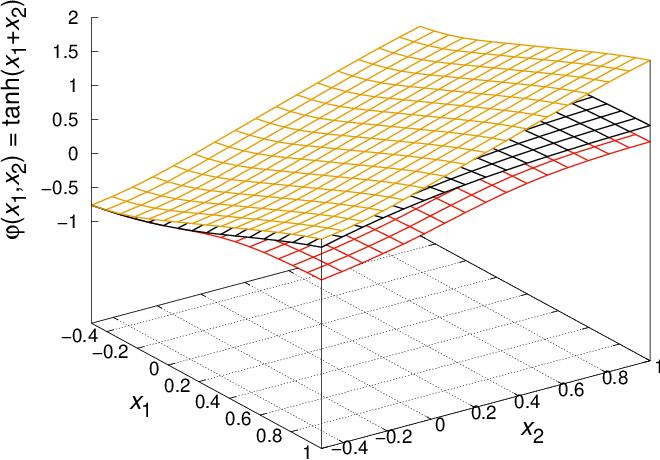}%
\hfill%
\includegraphics[width=0.48\linewidth]{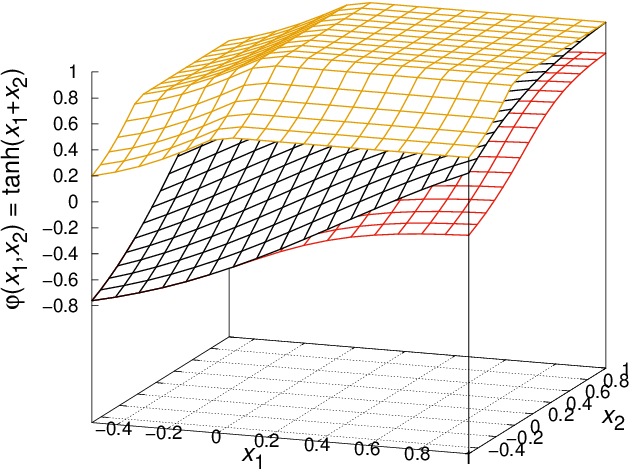}
\caption{Superposition relaxations of the ridge function $f({\bm x})=\tanh(x_1+x_2)$ on the domain ${\bm X}=[-0.5,1]^2$ (left plot) and after intersection with the exact range $[\tanh(-1),\tanh(2)]$ of $f$ (right plot). The ridge function is shown in back lines, the superposition underestimator in red lines, and the superposition overestimator in orange lines. }
\label{fig:example2}
\end{figure}

\subsection{Composition of Superposition Relaxations}
\label{sec:composition}

We exploit the previous results on the construction of superposition relaxations for ridge functions to propagate a superposition relaxation through composition with a univariate function. The following theorem enables this propagation for convex monotonic univariate terms. A similar result holds for concave monotonic terms by symmetry. 

\begin{theorem}
\label{thm:cvxcomposition}
Let $(g^{\rm u},g^{\rm o})_{\bm X}$ be a superposition relaxation of the function $g:{\bm X}\to\mathbb{R}$ on ${\bm X}\in\mathbb{IR}^n$, and define the sets $\mathcal{P}^{\rm u} \coloneqq \{i=1,\ldots, n \mid \wid{g^{\rm u}_i(X_i)}>0 \}$ and $\mathcal{P}^{\rm o} \coloneqq \{i=1,\ldots, n \mid \wid{g^{\rm o}_i(X_i)}>0 \}$. Let the function $\varphi:Z\to\mathbb{R}$ be defined on $Z\supseteq [\underline{g}^{\rm u}({\bm X}),\overline{g}^{\rm o}({\bm X})]$ and assume that $\varphi$ is both convex and monotonic. If $\mathcal{P}^u\neq\emptyset$ and $\mathcal{P}^o\neq\emptyset$, a superposition relaxation $(f^{\rm u},f^{\rm o})_{\bm X}$ of the composite function $f=\varphi\circ g$ on ${\bm X}$ is obtained with the following non-zero summands,
\begin{itemize}
\item case of nondecreasing $\varphi$:
\begin{align*}
f^{\rm u}_i(x_i) =\ & \varphi\left(g^{\rm u}_i(x_i)-\underline{g}^{\rm u}_i(X_i)+\underline{g}^{\rm u}({\bm X})\right) - \left(1-\theta^{\rm u}_i\right)\,\varphi\left(\underline{g}^{\rm u}({\bm X})\right), \quad i\in\mathcal{P}^{\rm u}\\
f^{\rm o}_i(x_i) =\ & \theta^{\rm o}_i \varphi\left(\dfrac{g^{\rm o}_i(x_i)-\overline{g}^{\rm o}_i(X_i)}{\theta^{\rm o}_i} + \overline{g}^{\rm o}({\bm X})\right), \quad i\in\mathcal{P}^{\rm o}
\end{align*}
\item case of nonincreasing $\varphi$:
\begin{align*}
f^{\rm u}_i(x_i) =\ & \varphi\left(g^{\rm o}_i(x_i)-\overline{g}^{\rm o}_i(X_i)+\overline{g}^{\rm o}({\bm X})\right) - \left(1-\theta^{\rm o}_i\right)\,\varphi\left(\overline{g}^{\rm o}({\bm X})\right), \quad i\in\mathcal{P}^{\rm o}\\
f^{\rm o}_i(x_i) =\ & \theta^{\rm u}_i \varphi\left(\dfrac{g^{\rm u}_i(x_i)-\underline{g}^{\rm u}_i(X_i)}{\theta^{\rm u}_i} + \underline{g}^{\rm u}({\bm X})\right), \quad i\in\mathcal{P}^{\rm u}
\end{align*}
\end{itemize}
with $\theta^{\rm u}_i \coloneqq \dfrac{\wid{g^{\rm u}_i(X_i)}}{\wid{g^{\rm u}(\bm X)}}$, $\theta^{\rm o}_i \coloneqq \dfrac{\wid{g^{\rm o}_i(X_i)}}{\wid{g^{\rm o}(\bm X)}}$, and the rest of the summands equal to zero.
\end{theorem}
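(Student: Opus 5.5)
The plan is to reduce the composition to the ridge-function results of Section~\ref{sec:cvxridge} through a change of variables, using monotonicity of $\varphi$ to pass from $g$ to its superposition bound. I treat the nondecreasing case in detail; the nonincreasing case follows by exchanging the roles of $g^{\rm u}$ and $g^{\rm o}$.

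\emph{Monotonicity step.} For nondecreasing $\varphi$, every ${\bm x}\in{\bm X}$ satisfies $g^{\rm u}({\bm x})\le g({\bm x})\le g^{\rm o}({\bm x})$, and all three values lie in $[\underline{g}^{\rm u}({\bm X}),\overline{g}^{\rm o}({\bm X})]\subseteq Z$. Hence
\begin{align*}
\varphi\left(\sum_{i=1}^n g^{\rm u}_i(x_i)\right)\le \varphi(g({\bm x}))\le \varphi\left(\sum_{i=1}^n g^{\rm o}_i(x_i)\right).
\end{align*}
It therefore suffices to underestimate the ridge function $\varphi(\sum_i y_i)$ at $y_i=g^{\rm u}_i(x_i)$, and to overestimate it at $y_i=g^{\rm o}_i(x_i)$.

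\emph{Ridge step, underestimator.} Introduce the intervals $Y_i\coloneqq[\underline{g}^{\rm u}_i(X_i),\overline{g}^{\rm u}_i(X_i)]$. These bounds exist if the summands are bounded. If only inf and sup are available, closures can be used, since Proposition~\ref{prop:cvxridge_under} only needs $y_i\in Y_i$ and $\sum_i y_i\in Z$. The sum of lower bounds is $\underline{\sigma}=\sum_i\underline{g}^{\rm u}_i(X_i)=\underline{g}^{\rm u}({\bm X})$, because the range of a separable function is the sum of the summand ranges. Likewise $\overline{\sigma}-\underline{\sigma}=\wid{g^{\rm u}({\bm X})}$, so the weights $\theta_i$ of Proposition~\ref{prop:cvxridge_under} coincide with $\theta^{\rm u}_i$, and $\mathcal{P}=\mathcal{P}^{\rm u}\neq\emptyset$. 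Apply \eqref{eq:cvxridge_under1} with $y_i=g^{\rm u}_i(x_i)\in Y_i$. The right-hand form restricted to $\mathcal{P}$ gives exactly the stated $f^{\rm u}_i$. For $i\notin\mathcal{P}^{\rm u}$, $g^{\rm u}_i$ is constant equal to $\underline{g}^{\rm u}_i(X_i)$, so its contribution $\varphi(\underline{\sigma})$ has already been absorbed through $\sum_{i\in\mathcal{P}}\theta_i=1$. This is the equality in Proposition~\ref{prop:cvxridge_under}, and it justifies setting those summands to zero.

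\emph{Ridge step, overestimator.} Use $Y_i\coloneqq g^{\rm o}_i(X_i)$ in the same way. Apply the second form of \eqref{eq:cvxridge_over} in Proposition~\ref{prop:cvxridge_over}, the one with $\overline{X}_i$ and $\overline{\sigma}$. Here $\overline{\sigma}=\overline{g}^{\rm o}({\bm X})$ and $\theta_i=\theta^{\rm o}_i$, which yields the stated $f^{\rm o}_i$.

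\emph{Nonincreasing case.} The inequalities from the monotonicity step reverse: $\varphi(g^{\rm o}({\bm x}))\le\varphi(g({\bm x}))\le\varphi(g^{\rm u}({\bm x}))$. I then apply \eqref{eq:cvxridge_under2} to $y_i=g^{\rm o}_i(x_i)$ to get the stated underestimator. For the overestimator, I apply the first form of \eqref{eq:cvxridge_over} to $y_i=g^{\rm u}_i(x_i)$.

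\emph{Main obstacle.} The main difficulty is bookkeeping rather than analysis. One must check that the $\sigma$'s and $\theta$'s produced by the ridge propositions, applied to the auxiliary box ${\bm Y}$, match the quantities written in the theorem. The key fact is that range endpoints and widths of a separable function add over the summands. One must also verify that the convex set $Z$ contains every argument at which $\varphi$ is evaluated. Each such argument lies in $[\underline{\sigma},\overline{\sigma}]$, by the Jensen and convex-combination structure already used in the proofs of Propositions~\ref{prop:cvxridge_over} and~\ref{prop:cvxridge_under}.
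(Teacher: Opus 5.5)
Your proposal is correct and follows the paper's proof: you use monotonicity of $\varphi$ to sandwich $\varphi\circ g$ between $\varphi(\sum_i g^{\rm u}_i(x_i))$ and $\varphi(\sum_i g^{\rm o}_i(x_i))$, then apply Propositions~\ref{prop:cvxridge_under} and~\ref{prop:cvxridge_over} on the box of summand ranges, and handle the nonincreasing case by exchanging the roles of $g^{\rm u}$ and $g^{\rm o}$. Your bookkeeping of $\underline{\sigma}$, $\overline{\sigma}$ and $\theta_i$ is more explicit than the paper's; to finish the domain check, note that $[\underline{g}^{\rm u}({\bm X}),\overline{g}^{\rm u}({\bm X})]$ and $[\underline{g}^{\rm o}({\bm X}),\overline{g}^{\rm o}({\bm X})]$ both lie in $Z$ because $g^{\rm u}\le g^{\rm o}$ pointwise.
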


\begin{proof}
For $(f^{\rm u},f^{\rm o})_{\bm X}$ to define a valid superposition relaxation of $\varphi\circ g$ on ${\bm X}$, we must have
\begin{align}
\label{eq:unary_ieq}
\forall {\bm x}\in{\bm X}, \quad \sum_{i=1}^n f^{\rm u}_i(x_i) \leq \varphi\circ g({\bm x}) \leq \sum_{i=1}^n f^{\rm o}_i(x_i),
\end{align}
We prove the validity of this inequality in the nondecreasing case and note that the result in the nonincreasing case follows by symmetry. Since $(g^{\rm u},g^{\rm o})_{\bm X}$ is a superposition relaxation of $g$ on ${\bm X}$ and $\varphi$ is nondecreasing, we have
\begin{align*}
\varphi\left(\sum_{i=1}^n g^{\rm u}_i(x_i)\right) \leq \varphi\circ g({\bm x}) \leq \varphi\left(\sum_{i=1}^n g^{\rm o}_i(x_i)\right),
\end{align*}
for all $\bm x \in {\bm X}$. Then, by Proposition~\ref{prop:cvxridge_under} and by definition of the $f_i^{\rm u}$ summands,
\begin{align*}
\varphi\circ g({\bm x}) \geq\ & \sum_{i\in\mathcal{P}^{\rm u}} \left( \varphi\left(g^{\rm u}_i(x_i)-\underline{g}^{\rm u}(X_i)+\underline{g}^{\rm u}({\bm X})\right) - (1-\theta_i^{\rm u})\:\varphi\left(\underline{g}^{\rm u}({\bm X})\right) \right)
= \sum_{i=1}^n f^{\rm u}_i(x_i),
\end{align*}
which proves the left-hand side in \eqref{eq:unary_ieq}. Similarly, by Proposition~\ref{prop:cvxridge_over} and by definition of the $f_i^{\rm o}$ summands
\begin{align*}
\varphi\circ g({\bm x}) \leq\ & \sum_{i\in\mathcal{P}^{\rm o}} \theta^{\rm o}_i\varphi\left(\dfrac{g^{\rm o}_i(x_i)-\overline{g}^{\rm o}_i(X_i)}{\theta^{\rm o}_i} + \overline{g}^{\rm o}({\bm X})\right) = \sum_{i=1}^n f^{\rm o}_i(x_i),
\end{align*}
which proves the right-hand side in \eqref{eq:unary_ieq}.
\end{proof}

The composition rule in Theorem~\ref{thm:cvxcomposition} is restricted to monotonic, convex or concave, univariate outer functions, which includes operations such as $z\mapsto \sqrt{z}$, $z\mapsto \frac{1}{z}$, $z\mapsto \exp(z)$, $z\mapsto \log(z)$, $\max\{z,c\}$ and $\min\{z,c\}$ for some constant $c\in\mathbb{R}$. However, combined with the simple addition and scaling rules in Proposition~\ref{prop:addition}, this rule can be applied to a much richer set of operations and thereby enable the propagation of superposition relaxations through a broad range of factorable functions:
\begin{itemize}
\item In light of the previous discussions in Sections~\ref{sec:cvxridge} \& \ref{sec:ncvxridge}, composition with any nonmonotonic or nonconvex univariate functions $\varphi$ is possible, as long as $\varphi$ is decomposable as a sum of nondecreasing/nonincreasing convex/concave terms as in \eqref{eq:monotonic_decomp} and \eqref{eq:ncvxdecomp}. This enables both even and odd power terms $z\mapsto z^p$, hyperbolic functions such as $z\mapsto \tanh(z)$, and trigonometric functions such as $z\mapsto \cos(z)$ and $z\mapsto\sin(z)$.

\item A similar approach can be applied to enable binary product operations, $f({\bm x}) = g({\bm x})\times h({\bm x})$, either via DC-decomposition or log-transform:
\begin{align*}
g(\bm x)\times h(\bm x) =&\ \left(\dfrac{g(\bm x)+h(\bm x)}{2}\right)^2 - \left(\dfrac{g(\bm x)-h(\bm x)}{2}\right)^2\\
g(\bm x)\times h(\bm x) =&\ \exp\left(\log\left(g(\bm x)\right)+\log\left(h(\bm x)\right)\right).
\end{align*}
To compute tighter product relaxations or extend their range of application (in the case of the log-transform), the factors $g$ and $h$ may also be rescaled according to their respective ranges.

\item Binary min and max operations too can be readily decomposed. Recalling the identities
\begin{align*}
\max\{g(\bm x),h(\bm x)\} &\ = g(\bm x) + \max\{(h(\bm x)-g(\bm x)),0\}\\ &\ = h(\bm x) + \max\{(g(\bm x)-h(\bm x)),0\},
\end{align*}
a decomposition that preserves symmetry may be obtained by combining scaling, addition and composition with the nondecreasing convex univariate $z\mapsto\max\{z,0\}$ as follows
\begin{align*}
\max\{g(\bm x),h(\bm x)\} =\ & \frac{1}{2}\left[ (g(\bm x)+h(\bm x)) + \max\{(g(\bm x)-h(\bm x)),0\} \right.\\ & \quad \left. + \max\{(h(\bm x)-g(\bm x)),0\} \right].
\end{align*}
Similarly for $\min\{g(\bm x),h(\bm x)\}$,
\begin{align*}
\min\{g(\bm x),h(\bm x)\} =\ & \frac{1}{2}\left[ (g(\bm x)+h(\bm x)) + \min\{(g(\bm x)-h(\bm x)),0\} \right.\\ & \quad \left. + \min\{(h(\bm x)-g(\bm x)),0\} \right].
\end{align*}
\end{itemize}

\subsection{Range-tightening intersection}
\label{sec:tighten}

We discussed in Sections~\ref{sec:cvxridge}--\ref{sec:composition} that the composition operation between a superposition relaxation with a univariate outer function that is both monotonic and convex or concave can be constructed in such a way that the range of the resulting superposition relaxations matches that of the composite function. For nonmonotonic or nonconvex univariate outer functions, on the other hand, this composition entails a decomposition into a sum of monotonic, convex or concave, terms, which generally leads to range overestimation upon composition. This behavior was illustrated in Example~\ref{exm:tanh} for the function $f({\bm x})=\tanh(x_1+x_2)$. 

However, the exact range of many nonmonotonic/nonconvex univariate functions of interest on any given domain is known a priori. The following corollary of Theorem~\ref{thm:cvxcomposition} provides a mechanism for intersecting a superposition relaxation with such prior bounds. 

\begin{corollary}
\label{cor:intersection}
Let $(f^{\rm u},f^{\rm o})_{\bm X}$ be a superposition relaxation of the function $f:{\bm X}\to\mathbb{R}$ on ${\bm X}\in\mathbb{IR}^n$, and suppose that a bound $F\in\mathbb{IR}$ is also available that encloses the range of $f$ on ${\bm X}$. Then, a superposition relaxation $(f^{\rm u,ref},f^{\rm o,ref})_{\bm X}$ of $f$ on ${\rm X}$ is obtained with the following nonzero summands, for $i=1\ldots n$:
\begin{align*}
f^{\rm u,ref}_i(x_i) =\ & \max\left\{f^{\rm u}_i(x_i)-\underline{f}^{\rm u}_i(X_i)+\underline{f}^{\rm u}({\bm X}),\underline{F}\right\} - (1-\theta_i^{\rm u})\,\max\left\{\underline{f}^{\rm u}({\bm X}),\underline{F}\right\}, \quad i\in\mathcal{P}^{\rm u}\\
f^{\rm o,ref}_i(x_i) =\ & \min\left\{f^{\rm o}_i(x_i)-\overline{f}^{\rm o}_i(X_i)+\overline{f}^{\rm o}({\bm X}),\overline{F}\right\} - (1-\theta_i^{\rm o})\,\min\left\{\overline{f}^{\rm o}({\bm X}),\overline{F}\right\}, \quad i\in\mathcal{P}^{\rm o}.
\end{align*}
\end{corollary}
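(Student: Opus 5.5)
The plan is to read the intersection as a composition with the identity, "clipped" to the prior bound $F$. Since $F\supseteq f({\bm X})$, for every ${\bm x}\in{\bm X}$ we have
\begin{align*}
f({\bm x}) = \max\{f({\bm x}),\underline{F}\} = \min\{f({\bm x}),\overline{F}\}.
\end{align*}
The outer function $\varphi^{\rm u}:z\mapsto\max\{z,\underline{F}\}$ is nondecreasing and convex. The outer function $\varphi^{\rm o}:z\mapsto\min\{z,\overline{F}\}$ is nondecreasing and concave. Both are among the operations listed after Theorem~\ref{thm:cvxcomposition}. I would therefore obtain $f^{\rm u,ref}$ from $\varphi^{\rm u}\circ f$ and $f^{\rm o,ref}$ from $\varphi^{\rm o}\circ f$, using the given relaxation $(f^{\rm u},f^{\rm o})_{\bm X}$ as the inner relaxation $(g^{\rm u},g^{\rm o})_{\bm X}$ in Theorem~\ref{thm:cvxcomposition}.

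\emph{Underestimator.} Monotonicity of $\varphi^{\rm u}$ gives $\varphi^{\rm u}\big(\sum_i f^{\rm u}_i(x_i)\big)\le\varphi^{\rm u}(f({\bm x}))=f({\bm x})$. I then apply Proposition~\ref{prop:cvxridge_under}, inequality \eqref{eq:cvxridge_under1}, to the convex profile $\varphi^{\rm u}$. The "variables" are $z_i=f^{\rm u}_i(x_i)\in f^{\rm u}_i(X_i)$, whose interval lower bounds are $\underline{f}^{\rm u}_i(X_i)$, so that $\underline{\sigma}=\underline{f}^{\rm u}({\bm X})$. The resulting summands are exactly the nondecreasing-case formula of Theorem~\ref{thm:cvxcomposition}, and they coincide term by term with $f^{\rm u,ref}_i$. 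Indices outside $\mathcal{P}^{\rm u}$ contribute $\varphi^{\rm u}(\underline{\sigma})$, and these are absorbed through the weights $\theta_i^{\rm u}$, exactly as in the equality part of \eqref{eq:cvxridge_under1}.

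\emph{Overestimator.} This is the step needing care, because the stated formula is not the perspective form of Proposition~\ref{prop:cvxridge_over}. It is instead the concave mirror of the underestimator rule, anchored at the upper corner. I would derive it by applying \eqref{eq:cvxridge_under2} to $\psi\coloneqq-\varphi^{\rm o}$, which is convex on $\mathbb{R}$. Set $z_i=f^{\rm o}_i(x_i)$ with upper bounds $\overline{f}^{\rm o}_i(X_i)$, so that $\overline{\sigma}=\overline{f}^{\rm o}({\bm X})$. This gives
\begin{align*}
\min\Big\{\sum_i f^{\rm o}_i(x_i),\overline{F}\Big\}\le\sum_{i\in\mathcal{P}^{\rm o}}\Big(\min\big\{f^{\rm o}_i(x_i)-\overline{f}^{\rm o}_i(X_i)+\overline{f}^{\rm o}({\bm X}),\overline{F}\big\}-(1-\theta^{\rm o}_i)\min\{\overline{f}^{\rm o}({\bm X}),\overline{F}\}\Big).
\end{align*}
Combining this with $f({\bm x})=\min\{f({\bm x}),\overline{F}\}\le\min\{\sum_i f^{\rm o}_i(x_i),\overline{F}\}$, which holds by monotonicity, yields the claimed overestimator.

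Convexity of $\psi$ is all that Proposition~\ref{prop:cvxridge_under} requires; no monotonicity is needed there. The main points to check are therefore the following.
\begin{itemize}
\item The hypotheses of Proposition~\ref{prop:cvxridge_under} hold: the ranges $f^{\rm u}_i(X_i)$ and $f^{\rm o}_i(X_i)$ are compact intervals. I would assume the summands are continuous, or else replace the ranges by their interval hulls.
\item The sum of the $z_i$ lies in the domain of the profile, which is trivial here since both profiles are defined on all of $\mathbb{R}$.
\item The degenerate cases $\mathcal{P}^{\rm u}=\emptyset$ or $\mathcal{P}^{\rm o}=\emptyset$. In these cases $f^{\rm u}$ (respectively $f^{\rm o}$) is constant. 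One then simply takes constant summands $\max\{\underline{f}^{\rm u}({\bm X}),\underline{F}\}/n$ (respectively $\min\{\overline{f}^{\rm o}({\bm X}),\overline{F}\}/n$), which are trivially valid.
\end{itemize}
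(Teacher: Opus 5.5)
Your proposal is correct and follows the paper's route. The paper also treats the intersection as composition with $z\mapsto\max\{z,\underline{F}\}$ (convex, nondecreasing) via Theorem~\ref{thm:cvxcomposition}, and gets the overestimator ``by symmetry'' from $z\mapsto\min\{z,\overline{F}\}$, which is what your explicit application of \eqref{eq:cvxridge_under2} to $-\min\{\cdot,\overline{F}\}$ does. You are right that $\min\{z,\overline{F}\}$ is nondecreasing; the paper's proof calls it nonincreasing, but only the nondecreasing concave mirror of the underestimator rule yields the stated upper-corner formula in terms of $f^{\rm o}$.
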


\begin{proof}
Noting that the univariate function $z\mapsto\max\{z,\underline{F}\}$ is nondecreasing and convex on the range $[\underline{f}^{\rm u}({\bm X}),\overline{f}^{\rm o}({\bm X})]$ of the superposition relaxation $(f^{\rm u},f^{\rm o})_{\bm X}$, the validity of the summands $f^{\rm u,ref}_i$ is a direct consequence of Theorem~\ref{thm:cvxcomposition}. The validity of the summands $f^{\rm o,ref}_i$ follows by symmetry, noting that $z\mapsto\min\{z,\overline{F}\}$ is nonincreasing and concave on $[\underline{f}^{\rm u}({\bm X}),\overline{f}^{\rm o}({\bm X})]$.
\end{proof}

\begin{example}
\label{exm:tanh2}
We revisit the superposition relaxation constructed in Example~\ref{exm:tanh} for the ridge function $f({\bm x})=\tanh(x_1+x_2)$ on the domain ${\bm X}=[-0.5,1]^2$ by intersecting it with the a priori bound $F=[\tanh(-1),\tanh(2)]$ based on Corollary~\ref{cor:intersection}. The right plot of Figure~\ref{fig:example2} illustrates that the maximum of the superposition overestimator matches the a priori upper bound, unlike the original superposition overestimator in the left plot. However, this is also seen to be detrimental to the sharpness of the superposition overestimator towards the lower corner $x_1=x_2=-0.5$, where the original superposition overestimator used to connect with the ridge function. On the other hand, the two superposition underestimators are identical, which is expected since the minimum of the ridge function at $x_1=x_2=-0.5$ is correctly tracked.
\end{example}

With the range-tightening intersection given in Corollary~\ref{cor:intersection}, we can construct superposition relaxations whose ranges are always at least as tight as interval bounds derived from natural interval extensions. However, because this construction relies on outer-composition with either a $\min$ or $\max$ function which is not everywhere continuously differentiable, it will impede the quadratic convergence of the superposition relaxations, in general. We formalize these convergence considerations in the next section.

\section{Convergence Analysis}
\label{sec:convergence}

We are interested in studying the convergence of superposition relaxations constructed via the arithmetic rules presented in Section~\ref{sec:arithmetic}. The concept of Hausdorff convergence in traditional interval arithmetic \citep{Moore1979,Alefeld2000}, extends naturally to superposition relaxations, where it bounds the distance between the minima of $f$ and $f^{\rm u}$ and the maxima of $f$ and $f^{\rm o}$ on given variable subsets. 

\begin{definition}
\label{def:hausdorff_convergence}
Let $f:\mathcal{X}\subset\mathbb{R}^n\to\mathbb{R}$ be a continuous function. The superposition relaxations $(f^{\rm u},f^{\rm o})_{{\bm X}\subset\mathcal{X}}$ of $f$ are said to have Hausdorff convergence of order $\beta>0$ if there exists a constant $\tau<\infty$ such that, for all ${\bm X}\subset\mathcal{X}$, ${\bm X}\in\mathbb{IR}^n$,
\begin{align}
\label{eq:hausdorff_distance}
\wid{H_f(\bm X)} - \wid{f(\bm X)} \leq \tau\, {\rm diam}(\bm X)^{\beta},
\end{align}
where $H_f:\mathbb{IR}^n\to\mathbb{IR}$ is the inclusion function such that $H_f(\bm X) \coloneqq \displaystyle \Big[\inf_{\bm x\in\bm X} f^{\rm u}({\bm x}), \sup_{\bm x\in\bm X} f^{\rm o}({\bm x})\Big]$.
\end{definition}

However, that convergence in the Hausdorff sense does not give information about the gap between $f$ and either $f^{\rm u}$ or $f^{\rm o}$ for given points in ${\bm X}$. Following \citep{Bompadre2012,Bompadre2013}, we consider a stronger notion of convergence based on the maximal difference of $f$ with $f^{\rm u}$ and $f^{\rm o}$ on all points of ${\bm X}$.

\begin{definition}
\label{def:pointwise_convergence}
Let $f:\mathcal{X}\subset\mathbb{R}^n\to\mathbb{R}$ be a continuous function. The superposition relaxations $(f^{\rm u},f^{\rm o})_{{\bm X}\subset\mathcal{X}}$ of $f$ are said to have {\em pointwise convergence of order $\alpha>0$} if there exists a constant $\tau<\infty$ such that, for all ${\bm X}\subset\mathcal{X}$, ${\bm X}\in\mathbb{IR}^n$,
\begin{align}
\label{eq:pointwise_distance}
\sup_{\bm x \in \bm X} \left|f(\bm x)-f^{\rm u}(\bm x)\right| \leq \tau\, {\rm diam}(\bm X)^{\alpha} \quad\text{and}\quad \sup_{\bm x \in \bm X} \left|f(\bm x)-f^{\rm o}(\bm x)\right| \leq \tau\, {\rm diam}(\bm X)^{\alpha}.
\end{align}
\end{definition}

\begin{remark}
\label{rmk:hausdorff_vs_pointwise}
Pointwise convergence is indeed stronger than Hausdorff convergence in the sense that the inclusion function associated with a scheme with pointwise convergence $\alpha$ necessarily has Hausdorff convergence of order $\beta\geq \alpha$; see, e.g., Theorem~1 in \citep{Bompadre2012} for a proof.
\end{remark}

It is clear that the initialization operation ${\bm x}\mapsto x_i$ of a superposition relaxation as in \eqref{eq:initialization} has an arbitrarily high pointwise convergence order since it is exact. Moreover, the following proposition summarizes results on the propagation of the pointwise convergence order of superposition relaxations through affine operations. A proof for the addition operation is similar to that of Theorem~3 in \citep{Bompadre2012} for McCormick relaxations, which is independent of the convexity or concavity of the estimators; and a proof for the scaling operation proceeds analogously.

\begin{proposition}
\label{prop:addition_convergence}
Let $g,h:\mathcal{X}\subset\mathbb{R}^n\to\mathbb{R}$ and assume that their respective superposition relaxations $(g^{\rm u},g^{\rm o})_{{\bm X}\subset\mathcal{X}}$ and $(h^{\rm u},h^{\rm o})_{{\bm X}\subset\mathcal{X}}$ have pointwise convergence of order $\alpha_g, \alpha_h>0$, respectively. Then, the superposition relaxations constructed in Proposition~\ref{prop:addition} for the addition operation $g+h$ has pointwise convergence of order $\min\{\alpha_g,\alpha_h\}$ and for the scaling operation $ag+b$ has pointwise convergence of order $\alpha_g$.
\end{proposition}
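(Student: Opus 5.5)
The plan is to work directly from Definition~\ref{def:pointwise_convergence}, using the explicit summands of Proposition~\ref{prop:addition} and the triangle inequality.

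\textbf{Addition.} Let $\tau_g,\tau_h$ be the constants associated with $g$ and $h$, and set $f=g+h$. By construction, $f^{\rm u}=g^{\rm u}+h^{\rm u}$ and $f^{\rm o}=g^{\rm o}+h^{\rm o}$ pointwise on ${\bm X}$. Hence, for every ${\bm x}\in{\bm X}$,
\begin{align*}
\left|f({\bm x})-f^{\rm u}({\bm x})\right| \leq \left|g({\bm x})-g^{\rm u}({\bm x})\right| + \left|h({\bm x})-h^{\rm u}({\bm x})\right| \leq \tau_g\,{\rm diam}({\bm X})^{\alpha_g} + \tau_h\,{\rm diam}({\bm X})^{\alpha_h}.
\end{align*}
The overestimator is handled the same way.

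The only delicate step is merging the two powers into a single power $\min\{\alpha_g,\alpha_h\}$, because ${\rm diam}({\bm X})^{\alpha}$ is not monotone in $\alpha$ when ${\rm diam}({\bm X})>1$. I would take the convention used for McCormick relaxations in \citep{Bompadre2012}: the host set $\mathcal{X}$ is bounded (say, contained in a reference box), so $D\coloneqq\sup_{{\bm X}\subset\mathcal{X}}{\rm diam}({\bm X})<\infty$. Write $\alpha\coloneqq\min\{\alpha_g,\alpha_h\}$. For $d\in[0,D]$ and $\beta\geq\alpha$,
\begin{align*}
d^{\beta} = d^{\alpha}\,d^{\beta-\alpha} \leq \max\{1,D^{\beta-\alpha}\}\,d^{\alpha}.
\end{align*}
Therefore $\tau\coloneqq\tau_g\max\{1,D^{\alpha_g-\alpha}\}+\tau_h\max\{1,D^{\alpha_h-\alpha}\}$ works in both inequalities of \eqref{eq:pointwise_distance}. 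This gives order $\min\{\alpha_g,\alpha_h\}$.

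\textbf{Scaling.} For $f=ag+b$ with $a\geq0$, we have $f^{\rm u}=a\,g^{\rm u}+b$, since the $n$ constant shifts $b/n$ sum to $b$. Thus $f-f^{\rm u}=a(g-g^{\rm u})$, and likewise $f-f^{\rm o}=a(g-g^{\rm o})$. For $a<0$, the roles of the estimators swap: $f-f^{\rm u}=a(g-g^{\rm o})$ and $f-f^{\rm o}=a(g-g^{\rm u})$. In every case the errors are bounded by $|a|\,\tau_g\,{\rm diam}({\bm X})^{\alpha_g}$, so the constant is $|a|\tau_g$ and the order is $\alpha_g$.

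The main obstacle I expect is this exponent-merging issue for boxes with large diameter. If $\mathcal{X}$ were unbounded, the claim would need to be read asymptotically (for ${\rm diam}({\bm X})\leq1$), and there the merged bound holds immediately, since $d^{\beta}\leq d^{\alpha}$ for $d\le 1$.
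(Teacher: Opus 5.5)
Your proposal is correct and takes essentially the route the paper indicates: the paper gives no separate proof but defers to the triangle-inequality argument of Theorem~3 in \citep{Bompadre2012}, which likewise uses boundedness of the host domain to merge the two powers of ${\rm diam}({\bm X})$ into the smaller exponent. Your scaling argument, $f-f^{\rm u}=a(g-g^{\rm u})$ with the estimators swapped when $a<0$, is the ``analogous'' proof the paper alludes to, and you are right that boundedness of $\mathcal{X}$ is an implicit assumption the paper never states.
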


The main focus in the rest of this section is to determine how the convergence order of superposition relaxations propagates through composition. The following theorem states that quadratic pointwise convergence is preserved through composition with any monotonic, convex or concave function that is continuously differentiable with Lipschitz-continuous derivatives.

\begin{theorem}   
\label{thm:cvxcomposition_convergence}
Let $g:\mathcal{X}\subset\mathbb{R}^n\to\mathbb{R}$, and assume that its superposition relaxations $(g^{\rm u},g^{\rm o})_{{\bm X}\subset\mathcal{X}}$ have pointwise convergence of order $\alpha_g\geq 2$. Let $\varphi:Z\to\mathbb{R}$ be a continuously-differentiable function on $Z\supseteq [\underline{g}^{\rm u}(\mathcal{X}),\overline{g}^{\rm o}(\mathcal{X})]$ with Lipschitz-continuous derivatives, and assume $\varphi$ is both convex and monotonic. Then, the superposition relaxations $(f^{\rm u},f^{\rm o})_{{\bm X}\subset\mathcal{X}}$ for the composite function $f=\varphi\circ g$ constructed in Theorem~\ref{thm:cvxcomposition} have pointwise convergence of order $\alpha_f\geq 2$.
\end{theorem}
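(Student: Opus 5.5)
The plan is to split each pointwise error of the composite relaxation into two parts: an \emph{inner} error inherited from $(g^{\rm u},g^{\rm o})$, and an \emph{outer} error coming from the ridge-function relaxations of Propositions~\ref{prop:cvxridge_over} and \ref{prop:cvxridge_under}. I treat the nondecreasing case; the nonincreasing case follows by swapping the roles of $g^{\rm u}$ and $g^{\rm o}$. Write $d \coloneqq \operatorname{diam}({\bm X})$, let $L_\varphi$ be a Lipschitz constant of $\varphi$ on the compact set $[\underline{g}^{\rm u}(\mathcal{X}),\overline{g}^{\rm o}(\mathcal{X})]$, and let $K$ be a Lipschitz constant of $\varphi'$ there. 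For the underestimator I would write
\begin{align*}
0\le f({\bm x})-f^{\rm u}({\bm x}) = \left[\varphi(g({\bm x}))-\varphi(g^{\rm u}({\bm x}))\right] + \left[\varphi(g^{\rm u}({\bm x}))-f^{\rm u}({\bm x})\right].
\end{align*}
The first bracket is at most $L_\varphi\,\tau_g d^{\alpha_g}$. For the overestimator I would use the analogous split through $\varphi(g^{\rm o}({\bm x}))$.

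For the outer underestimation gap, set $\underline{\sigma}\coloneqq\underline{g}^{\rm u}({\bm X})$ and $\delta_i\coloneqq g^{\rm u}_i(x_i)-\underline{g}^{\rm u}_i(X_i)\ge 0$. The gap then equals
\begin{align*}
\Big[\varphi\big(\underline{\sigma}+\textstyle\sum_i\delta_i\big)-\varphi(\underline{\sigma})-\varphi'(\underline{\sigma})\sum_i\delta_i\Big] - \sum_i\Big[\varphi(\underline{\sigma}+\delta_i)-\varphi(\underline{\sigma})-\varphi'(\underline{\sigma})\delta_i\Big].
\end{align*}
Each bracket in the sum is nonnegative by convexity, so the gap is at most the first bracket. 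By the descent lemma this is at most $\frac{K}{2}\big(\sum_i\delta_i\big)^2 \le \frac{K}{2}\,\wid{g^{\rm u}({\bm X})}^2$. For the overestimator, set $z_i\coloneqq(g^{\rm o}_i(x_i)-\overline{g}^{\rm o}_i(X_i))/\theta^{\rm o}_i+\overline{g}^{\rm o}({\bm X})$. Each $z_i$ lies in $g^{\rm o}({\bm X})$ (this was noted in Remark~\ref{rmk:cvxridge_over}), and $\sum_i\theta^{\rm o}_i z_i=g^{\rm o}({\bm x})$. The gap is therefore a Jensen gap $\sum_i\theta^{\rm o}_i\varphi(z_i)-\varphi(\bar z)$ with $\bar z\coloneqq\sum_i\theta^{\rm o}_iz_i$. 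Linearizing at $\bar z$ and using Lipschitz continuity of $\varphi'$ bounds it by $\frac{K}{2}\sum_i\theta^{\rm o}_i(z_i-\bar z)^2\le \frac{K}{2}\,\wid{g^{\rm o}({\bm X})}^2$.

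The remaining and main step is to show that $\wid{g^{\rm u}({\bm X})}$ and $\wid{g^{\rm o}({\bm X})}$ are $O(d)$. From the pointwise convergence of $(g^{\rm u},g^{\rm o})$ we get $\underline{g}({\bm X})-\tau_g d^{\alpha_g}\le g^{\rm u}({\bm x})\le g({\bm x})\le\overline{g}({\bm X})$, so that $\wid{g^{\rm u}({\bm X})}\le \wid{g({\bm X})}+\tau_g d^{\alpha_g}$, and symmetrically for $g^{\rm o}$. I would then invoke Lipschitz continuity of $g$ on $\mathcal{X}$ with constant $L_g$, which holds for the factorable functions considered here and which I would state as a standing assumption if it is not already implied. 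This gives $\wid{g({\bm X})}\le L_g d$. Since $d\le D\coloneqq\operatorname{diam}(\mathcal{X})$ and $\alpha_g\ge 1$, we have $\tau_g d^{\alpha_g}\le\tau_gD^{\alpha_g-1}d$, so both widths are at most $c\,d$ for some constant $c$.

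Combining the pieces gives $\sup_{\bm X}|f-f^{\rm u}|,\ \sup_{\bm X}|f-f^{\rm o}|\le L_\varphi\tau_g d^{\alpha_g}+\tfrac{K}{2}c^2d^2\le \tau_f d^2$, using $d^{\alpha_g}\le D^{\alpha_g-2}d^2$. The degenerate cases $\mathcal{P}^{\rm u}=\emptyset$ or $\mathcal{P}^{\rm o}=\emptyset$ are trivial: the corresponding estimator of $g$ is then constant on ${\bm X}$, the outer gap vanishes, and only the inner term remains.
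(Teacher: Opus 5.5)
Your proof is correct and follows essentially the paper's route: the same split into an inner error controlled by $L_\varphi\tau_g\operatorname{diam}({\bm X})^{\alpha_g}$ and an outer ridge-relaxation gap bounded by second-order (Lipschitz-gradient) estimates, with the estimator widths controlled through Lipschitz continuity of $g$ exactly as in Lemma~\ref{lem:estimator_width}. That Lipschitz assumption is left implicit in the theorem statement, so you are right to make it explicit; your variations (dropping the nonnegative summand remainders by convexity, and linearizing the overestimator's Jensen gap at the weighted mean so that the nonnegative quantity $f^{\rm o}-\varphi\circ g^{\rm o}$ is the one bounded) only sharpen constants and tidy signs.
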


The following lemma is used in the proof of Theorem~\ref{thm:cvxcomposition_convergence}.

\begin{lemma}   
\label{lem:estimator_width}
Let $(g^{\rm u},g^{\rm o})_{{\bm X}\subset\mathcal{X}}$ be superposition relaxations for the function $g:\mathcal{X}\subset\mathbb{R}^n\to\mathbb{R}$. Assume $g$ to be Lipschitz continuous on $\mathcal{X}$ with Lipschitz constant $L_g$ such that, for all $\bm X\in \mathcal{X}$, $\bm X\in\mathbb{IR}^n$,
\begin{align*}
\wid{g(\bm X)} \leq L_g\, {\rm diam}(\bm X).
\end{align*}
If $(g^{\rm u},g^{\rm o})_{{\bm X}\subset\mathcal{X}}$ has pointwise convergence of order $\alpha_g\geq 1$, then there exists a constant $L'_g$ such that, for all $i=1,\ldots,n$ and all $\bm X\in \mathcal{X}$, $\bm X\in\mathbb{IR}^n$,
\begin{align*}
\wid{g_i^{\rm u}(X_i)} \leq L'_g\, {\rm diam}(\bm X) \quad\text{and}\quad \wid{g_i^{\rm o}(X_i)} \leq L'_g\, {\rm diam}(\bm X).
\end{align*}
\end{lemma}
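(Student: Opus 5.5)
The plan is to bound the variation of each summand $g^{\rm u}_i$ (and symmetrically $g^{\rm o}_i$) on $X_i$ by moving only the $i$th coordinate. Separability makes the other summands cancel. Fix ${\bm X}\subset\mathcal{X}$ and an index $i$, and pick $\xi,\xi'\in X_i$. Choose any fixed values $x_j\in X_j$ for $j\neq i$, and let ${\bm x},{\bm x}'\in{\bm X}$ be the points that differ only in their $i$th component, equal to $\xi$ and $\xi'$ respectively. By separability,
\begin{align*}
g^{\rm u}_i(\xi)-g^{\rm u}_i(\xi') = g^{\rm u}({\bm x})-g^{\rm u}({\bm x}').
\end{align*}
This identity converts a statement about a single univariate summand into one about the full underestimator evaluated at two points of ${\bm X}$.

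Next I insert $g$ and apply the triangle inequality:
\begin{align*}
|g^{\rm u}({\bm x})-g^{\rm u}({\bm x}')| \leq |g^{\rm u}({\bm x})-g({\bm x})| + |g({\bm x})-g({\bm x}')| + |g({\bm x}')-g^{\rm u}({\bm x}')|.
\end{align*}
The two outer terms are each at most $\tau\,{\rm diam}({\bm X})^{\alpha_g}$ by the pointwise convergence assumption (Definition~\ref{def:pointwise_convergence}). The middle term is at most $\wid{g({\bm X})}\leq L_g\,{\rm diam}({\bm X})$. Taking the supremum over $\xi,\xi'$ then gives
\begin{align*}
\wid{g^{\rm u}_i(X_i)} \leq 2\tau\,{\rm diam}({\bm X})^{\alpha_g} + L_g\,{\rm diam}({\bm X}).
\end{align*}
The same argument applies verbatim to $g^{\rm o}_i$.

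The main obstacle is turning ${\rm diam}({\bm X})^{\alpha_g}$ into a linear bound when $\alpha_g>1$, because no single constant works for arbitrarily large boxes. I will handle this by boundedness of the domain. Since ${\bm X}\subset\mathcal{X}$, we have ${\rm diam}({\bm X})\leq D\coloneqq{\rm diam}(\mathcal{X})$, which I take to be finite because $\mathcal{X}$ is tacitly bounded, as in the surrounding convergence analysis. Then
\begin{align*}
{\rm diam}({\bm X})^{\alpha_g}\leq D^{\alpha_g-1}\,{\rm diam}({\bm X}) \quad\text{for } \alpha_g\geq 1.
\end{align*}
Setting $L'_g\coloneqq L_g+2\tau D^{\alpha_g-1}$ then completes the proof. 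If $\mathcal{X}$ is not bounded, I would have to add boundedness as an assumption, because the estimate above needs $D$ to be finite.
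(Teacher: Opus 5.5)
Your proof is correct and follows essentially the paper's route. Separability reduces a summand's width to the variation of the full estimator over ${\bm X}$: the paper does this via $\wid{g^{\rm o}_i(X_i)}\leq\wid{g^{\rm o}({\bm X})}$, you by moving only the $i$th coordinate. That variation is then compared with $g$ using Lipschitz continuity and pointwise convergence, where the paper gets a single $\tau$ instead of your $2\tau$ by exploiting the sign $g^{\rm o}-g\geq 0$. Your explicit appeal to boundedness of $\mathcal{X}$, which turns $\tau\,{\rm diam}({\bm X})^{\alpha_g}$ into a linear bound, fills in a step the paper leaves implicit when it concludes ``which proves the result.''
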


\begin{proof}
We consider the case of overestimator summands $g_i^{\rm o}$, noting that a proof for the underestimator summands $g_i^{\rm u}$ proceeds similarly. By separability of the superposition overestimator $g^{\rm o}$, we have for each $i=1,\ldots,n$,
\begin{align*}
\wid{g_i^{\rm o}(X_i)} \leq\ & \wid{g^{\rm o}(\bm X)}\\
\leq\ & \wid{g(\bm X)} + \sup_{\bm x\in\bm X} \left|g^{\rm o}(\bm x) - g(\bm x)\right| - \inf_{\bm x\in\bm X} \left|g^{\rm o}(\bm x) - g(\bm x)\right|\\
\leq\ & \wid{g(\bm X)} + \sup_{\bm x\in\bm X} \left|g^{\rm o}(\bm x) - g(\bm x)\right|.
\end{align*}
Finally, by Lipschitz continuity of $g$ on $\mathcal{X}$ and pointwise convergence of $(g^{\rm u},g^{\rm o})_{{\bm X}\subset\mathcal{X}}$ in $\mathcal{X}$, we have
\begin{align*}
\forall\bm X\in\mathcal{X}, \bm X\in\mathbb{IR}^n, \quad \wid{g_i^{\rm o}(X_i)} \leq\ & L_g\, {\rm diam}(\bm X) + \tau_g \, {\rm diam}(\bm X)^{\alpha_g}.
\end{align*}
for some $\tau_g<\infty$, which proves the result.
\end{proof}

\begin{proof}[Proof of Theorem~\ref{thm:cvxcomposition_convergence}]
We prove that the scheme $(f^{\rm u},f^{\rm o})_{{\bm X}\subset\mathcal{X}}$ has quadratic pointwise convergence in the case of a nondecreasing and convex $\varphi$ and note that the result in the nonincreasing case is proved analogously. We start by decomposing the estimation error as
\begin{align}
\sup_{\bm x \in \bm X} \left|\varphi \circ {g}(\bm x)-\sum_{i=1}^n f^{\rm u}_i(x_i)\right| \leq\ & \sup_{\bm x \in \bm X} \left| \varphi \circ {g}(\bm x) - \varphi\left( \sum_{i=1}^n g^{\rm u}_i(x_i)\right) \right| \nonumber\\ & \qquad\qquad + \sup_{\bm x \in \bm X} \left| \varphi\left( \sum_{i=1}^n g^{\rm u}_i(x_i)\right) - \sum_{i=1}^n f^{\rm u}_i(x_i)\right|\label{eq:supcvxunder}\\
\text{\!\!\!and}\ \ \sup_{\bm x \in \bm X} \left|\varphi \circ {g}(\bm x)-\sum_{i=1}^n f^{\rm o}_i(x_i)\right| \leq\ & \sup_{\bm x \in \bm X}
\left| \varphi \circ {g}(\bm x) - \varphi\left( \sum_{i=1}^n g^{\rm o}_i(x_i)\right) \right| \nonumber\\ & \qquad\qquad + \sup_{\bm x \in \bm X} \left| \varphi\left( \sum_{i=1}^n g^{\rm o}_i(x_i)\right) - \sum_{i=1}^n f^{\rm o}_i(x_i)\right|.\label{eq:supcvxover}
\end{align}
Since $\varphi$ is continuously differentiable on $Z$, let $L_\varphi$ denote its Lipschitz constant so that
\begin{align*}
\sup_{\bm x \in \bm X} \left| \varphi \circ {g}(\bm x) - \varphi\left( \sum_{i=1}^n g^{\rm u}_i(x_i)\right) \right| \leq\ & L_\varphi \sup_{\bm x \in \bm X}
\left| g(\bm x) - \sum_{i=1}^n g^{\rm u}_i(x_i) \right| ,
\end{align*}
and since $(g^{\rm u}_{\bm X},g^{\rm o}_{\bm X})_{{\bm X}\subset\mathcal{X}}$ has pointwise convergence of order $\alpha_g\geq 2$,
\begin{align*}
\sup_{\bm x \in \bm X} \left| \varphi \circ {g}(\bm x) - \varphi\left( \sum_{i=1}^n g^{\rm u}_i(x_i)\right) \right| \leq\ & L_\varphi \tau_g\, {\rm diam}(\bm X)^{\alpha_g}. 
\end{align*}
Similarly, we have
\begin{align*}
\sup_{\bm x \in \bm X} \left| \varphi \circ {g}(\bm x) - \varphi\left( \sum_{i=1}^n g^{\rm o}_i(x_i)\right) \right| \leq\ & L_\varphi \tau_g\, {\rm diam}(\bm X)^{\alpha_g}. 
\end{align*}
Next, consider the second term in the right-hand side of \eqref{eq:supcvxunder}. We introduce the notation $\delta_i^{\rm u}(x_i) \coloneqq g^{\rm u}_i(x_i)-\underline{g}^{\rm u}_i(X_i)$ for $i\in\mathcal{P}^{\rm u}$, noting that $g^{\rm u}_i(x_i)-\underline{g}^{\rm u}_i(X_i)=0$ for $i\notin\mathcal{P}^{\rm u}$. With $\varphi$  convex nondecreasing in Theorem~\ref{thm:cvxcomposition}, we have
\begin{align*}
\varphi\left( \sum_{i=1}^n g^{\rm u}_i(x_i)\right) &- \sum_{i=1}^n f^{\rm u}_i(x_i)\\
=\ & \varphi\left( \sum_{i\in\mathcal{P}^{\rm u}} \delta^{\rm u}_i(x_i) + \underline{g}^{\rm u}({\bm X})\right) - \sum_{i\in\mathcal{P}^{\rm u}} \left(\varphi\left( \delta^{\rm u}_i(x_i) + \underline{g}^{\rm u}({\bm X})\right) - (1-\theta^{\rm u}_i)\underline{g}^{\rm u}({\bm X})\right).
\end{align*}
Second-order Taylor expansion of the composite terms gives
\begin{align*}
\varphi\left( \sum_{i\in\mathcal{P}^{\rm u}} \delta^{\rm u}_i(x_i) + \underline{g}^{\rm u}({\bm X})\right) =\ & \varphi\left( \underline{g}^{\rm u}({\bm X})\right) + \varphi'\left( \underline{g}^{\rm u}({\bm X})\right) \sum_{i\in\mathcal{P}^{\rm u}} \delta^{\rm u}_i(x_i)\\ & + \int_{\underline{g}^{\rm u}({\bm X})}^{\sum_{i\in\mathcal{P}^{\rm u}} \delta^{\rm u}_i(x_i)+\underline{g}^{\rm u}({\bm X})} \varphi''(\zeta)\, \left(\sum_{i\in\mathcal{P}^{\rm u}} \delta^{\rm u}_i(x_i)+\underline{g}^{\rm u}({\bm X})-\zeta\right)\, d\zeta\\
\leq\ & \varphi\left( \underline{g}^{\rm u}({\bm X})\right) + \varphi'\left( \underline{g}^{\rm u}({\bm X})\right) \sum_{i\in\mathcal{P}^{\rm u}} \delta^{\rm u}_i(x_i) + \frac{L_{\varphi'}}{2}\left(\sum_{i\in\mathcal{P}^{\rm u}} \delta^{\rm u}_i(x_i)\right)^2
\end{align*}
and, for all $i\in\mathcal{P}^{\rm u}$,
\begin{align*}
\varphi\left( \delta^{\rm u}_i(x_i) + \underline{g}^{\rm u}({\bm X})\right) =\ & \varphi\left( \underline{g}^{\rm u}({\bm X})\right) + \varphi'\left( \underline{g}^{\rm u}({\bm X})\right) \delta^{\rm u}_i(x_i)\\ & + \int_{\underline{g}^{\rm u}({\bm X})}^{\delta^{\rm u}_i(x_i)+\underline{g}^{\rm u}({\bm X})} \varphi''(\zeta)\, \left(\delta^{\rm u}_i(x_i)+\underline{g}^{\rm u}({\bm X})-\zeta\right)\, d\zeta\\
\geq\ & \varphi\left( \underline{g}^{\rm u}({\bm X})\right) + \varphi'\left( \underline{g}^{\rm u}({\bm X})\right) \delta^{\rm u}_i(x_i) - \frac{L_{\varphi'}}{2} \delta^{\rm u}_i(x_i)^2
\end{align*}
where $L_{\varphi'}$ is the Lipschitz constant of $\varphi'$ on $Z$. Gathering the terms, and noting that $\delta^{\rm u}_i(x_i) \leq \theta^{\rm u}_i\wid{g^{\rm u}(\bm X)} \leq \theta^{\rm u}_i L_g$ for some $L_g<\infty$ by Lemma~\ref{lem:estimator_width}, we obtain
\begin{align*}
& \varphi\left( \sum_{i\in\mathcal{P}^{\rm u}} \delta^{\rm u}_i(x_i) + \underline{g}^{\rm u}({\bm X})\right) - \sum_{i\in\mathcal{P}^{\rm u}} \left(\varphi\left( \delta^{\rm u}_i(x_i) + \underline{g}^{\rm u}({\bm X})\right) - (1-\theta^{\rm u}_i)\underline{g}^{\rm u}({\bm X})\right)\\
& \qquad\qquad\qquad \leq \frac{L_{\varphi'}}{2}\left(\sum_{i\in\mathcal{P}^{\rm u}} \delta^{\rm u}_i(x_i)\right)^2 + \frac{L_{\varphi'}}{2} \sum_{i\in\mathcal{P}^{\rm u}} \delta^{\rm u}_i(x_i)^2
\leq L_{\varphi'} L_g^2\, {\rm diam}(\bm X)^2,
\end{align*}
thereby establishing that the superposition underestimator has quadratic pointwise convergence. Lastly, we apply a similar approach to bound the second term in the right-hand side of \eqref{eq:supcvxover}, introducing the notation $\delta_i^{\rm o}(x_i) \coloneqq g^{\rm o}_i(x_i)-\overline{g}^{\rm o}_i(X_i)$ for $i\in\mathcal{P}^{\rm o}$. For a convex nondecreasing $\varphi$ in Theorem~\ref{thm:cvxcomposition}, we have
\begin{align*}
\varphi\left( \sum_{i=1}^n g^{\rm o}_i(x_i)\right) - \sum_{i=1}^n f^{\rm o}_i(x_i)
=\ & \varphi\left( \sum_{i\in\mathcal{P}^{\rm o}} \delta^{\rm o}_i(x_i) + \overline{g}^{\rm o}({\bm X})\right) - \sum_{i\in\mathcal{P}^{\rm o}} \theta^o_i\varphi\left( \frac{\delta^{\rm o}_i(x_i)}{\theta^{\rm o}_i} + \overline{g}^{\rm o}({\bm X})\right).
\end{align*}
Second-order Taylor expansion of the composite terms gives
\begin{align*}
\varphi\left( \sum_{i\in\mathcal{P}^{\rm o}} \delta^{\rm o}_i(x_i) + \overline{g}^{\rm o}({\bm X})\right) 
\leq\ & \varphi\left( \overline{g}^{\rm o}({\bm X})\right) + \varphi'\left( \overline{g}^{\rm o}({\bm X})\right) \sum_{i\in\mathcal{P}^{\rm o}} \delta^{\rm o}_i(x_i) + \frac{L_{\varphi'}}{2}\left(\sum_{i\in\mathcal{P}^{\rm o}} \delta^{\rm o}_i(x_i)\right)^2
\end{align*}
and, for all $i\in\mathcal{P}^{\rm o}$,
\begin{align*}
\theta^{\rm o}_i\varphi\left( \frac{\delta^{\rm o}_i(x_i)}{\theta^{\rm o}_i} + \overline{g}^{\rm o}({\bm X})\right) \geq \ & \theta^{\rm o}_i \varphi\left( \overline{g}^{\rm o}({\bm X})\right) + \varphi'\left( \overline{g}^{\rm o}({\bm X})\right) \delta^{\rm o}_i(x_i) - \frac{L_{\varphi'}}{2\theta^{\rm o}_i} \delta^{\rm o}_i(x_i)^2.
\end{align*}
Finally, noting that by Lemma~\ref{lem:estimator_width}
\begin{align*}
\exists L_g<\infty:\ \forall x_i\in X_i, \quad \frac{\delta^{\rm o}_i(x_i)}{\theta^{\rm o}_i} \leq \wid{g^{\rm o}(\bm X)} \leq L_g\, {\rm diam}(\bm X)
\end{align*}
and combining the previous inequalities, we get
\begin{align*}
& \varphi\left( \sum_{i\in\mathcal{P}^{\rm o}} \delta^{\rm o}_i(x_i) + \overline{g}^{\rm o}({\bm X})\right) - \sum_{i\in\mathcal{P}^{\rm o}} \theta^o_i\varphi\left( \frac{\delta^{\rm o}_i(x_i)}{\theta^{\rm o}_i} + \overline{g}^{\rm o}({\bm X})\right) \leq L_{\varphi'} L_g^2\, {\rm diam}(\bm X)^2,
\end{align*}
showing that the superposition overestimator also has quadratic pointwise convergence.
\end{proof}

The results in Proposition~\ref{prop:addition_convergence} and Theorem~\ref{thm:cvxcomposition_convergence} confirm that applying the arithmetic rules in Proposition~\ref{prop:addition} and Theorem~\ref{thm:cvxcomposition} leads to quadratically convergent schemes of superposition relaxations under mild assumptions. Indeed, many univariate functions $\varphi$ of interest are continuously differentiable with Lipschitz continuous derivatives, and this also includes the terms $\varphi(\min\{z,\sigma^\ast\})$ and $\varphi(\max\{z,\sigma^\ast\})$ in the decomposition \eqref{eq:nonmonotonic_underest} and the terms $\varphi_k$ in the decomposition \eqref{eq:ncvxdecomp}---see Example~\ref{exm:sqr2} below for an illustration. As expected, however, the relaxations may no longer enjoy quadratic pointwise convergence around points where a univariate term is not differentiable, for instance with the terms $x\mapsto\sqrt{z}$ and $z\mapsto|z|$ around $z=0$---see Example~\ref{exm:abs}. The application of the intersection operation in Corrolary~\ref{cor:intersection} may also impede the quadratic pointwise convergence due to the nondifferentiable $\min$ and $\max$ terms.

\begin{example}
\label{exm:sqr2}
We revisit the superposition relaxation constructed in Example~\ref{exm:sqr} for the ridge function $f({\bm x})=(x_1+x_2)^2$, now to analyze its pointwise convergence order. We start with the initial domain ${\bm X}_1=[-1,2]^2$, which we contract around given points ${\bm x}^\ast\in\bm X$ such that ${\bm X}_\rho \coloneqq \rho{\bm X}_1 + (1-\rho){\bm x}^\ast$ for $\rho\in (0,1]$. We rely on \eqref{eq:monotonic_overest} to construct the superposition overestimators; while we apply the decomposition in \eqref{eq:nonmonotonic_underest} to compute the superposition underestimators since $f$ can be nonmonotonic on ${\bm X}_\rho$. In particular, the latter adds superposition underestimators for the terms $\min\{x_1+x_2,0\}^2$ and $\max\{x_1+x_2,0\}^2$, which are continuously differentiable with Lipsichtz-continuous derivatives over ${\bm X}_1$. We can therefore expect for the superposition relaxations to still enjoy quadratic pointwise convergence per Theorem~\ref{thm:cvxcomposition_convergence}, despite the use of $\min$ and $\max$ operation. This is illustrated on the left plot of Figure~\ref{fig:example4}, where the error shrinks proportionally to $\rho^2$, even around the point $x_1^\ast+x_2^\ast=0$ where the derivative is not differentiable. This illustration also confirms that the quadratic convergence order in Theorem~\ref{thm:cvxcomposition_convergence} is sharp, even for propagation through composition with analytic functions.
\end{example}

\begin{figure}[tb]
\centering
\includegraphics[width=0.98\linewidth]{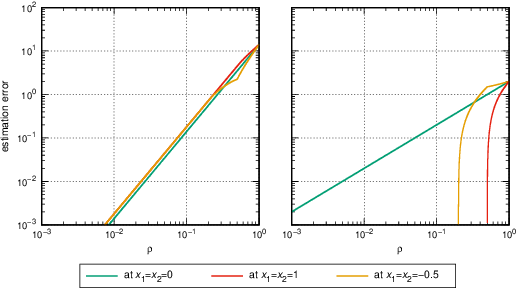}
\caption{Pointwise convergence analysis of a scheme of superposition relaxations for the ridge function $f({\bm x})=(x_1+x_2)^2$ (left plot) and $f({\bm x})=|x_1+x_2|$ (right plot) around three reference points in the domain ${\bm X}=[-1,2]^2$.}
\label{fig:example4}
\end{figure}

\begin{example}
\label{exm:abs}
We consider the ridge function $f({\bm x})=|x_1+x_2|$, which is not differentiable at points with $x_1+x_2=0$ and therefore Theorem~\ref{thm:cvxcomposition_convergence} may not apply. We again start with the initial domain ${\bm X}_1=[-1,2]^2$ and contract it around given points ${\bm x}^\ast\in\bm X$ as ${\bm X}_\rho \coloneqq \rho{\bm X}_1 + (1-\rho){\bm x}^\ast$ for $\rho\in (0,1]$. We construct the superposition underestimators and overestimators using \eqref{eq:monotonic_underest} and \eqref{eq:monotonic_overest}, respectively. The right plot of Figure~\ref{fig:example4} confirms the loss of quadratic pointwise convergence around points of nondifferentiability such as $x_1=x_2=0$, where the error shrinks proportionally to $\rho$ instead. In contrast, the function is locally linear around differentiable points, such as $x_1=x_2=1$ or $x_1=x_2=-0.5$, and can thus be estimated exactly by the superposition relaxations for small enough $\rho$ values.
\end{example}

\section{Parameterization of Superposition Relaxations}
\label{sec:parameterization}

An effective implementation of superposition relaxation hinges on the ability to propagate the univariate summands through the arithmetic rules developed in Section~\ref{sec:arithmetic} and in such a way that the asymptotic convergence properties established in Section~\ref{sec:convergence} can be preserved. In this section, we consider finite parameterizations of the univariate summands. Three desirable properties for a practical parameterization include:
\begin{enumerate}
\item[P1.] Ability to recover the extrema of a parameterized univariate function efficiently;
\item[P2.] Invariance of the parameterization under affine operations;
\item[P3.] Uniform approximation of any univariate continuous function from below and above by a sequence of parameterized univariate functions with increasing number of parameters. 
\end{enumerate}
In addition to computing the exact range of a superposition relaxation, the first property P1 is also relevant for the composition rule established in Theorem~\ref{thm:cvxcomposition} as it uses either the minimum or maximum of the univariate summands to propagate the superposition relaxations. The second property P2 ensures that superposition relaxations can be propagated exactly through addition and scaling operations as defined in Proposition~\ref{prop:addition} and that their pointwise convergence order propagates per the result of Proposition~\ref{prop:addition_convergence}. As for the third property P3, it guarantees that the approximation error introduced by the parameterization can be controlled to an arbitrary precision upon increasing the number of parameters, e.g. via subpartitioning. Additionally, to preserve the quadratic convergence order of the superposition relaxations through the composition rule per Theorem~\ref{thm:cvxcomposition_convergence}, the approximation error of the parameterized univariate functions itself should be quadratically convergent. 

\begin{remark}
It is work noting that one could in principle propagate superposition estimators in a pointwise manner, which would remove the need to parameterize the univariate summands. However, doing so would require the propagation of supporting bounds on the range of each operation in order to enable the composition; see the composition rule in Theorem~\ref{thm:cvxcomposition} where bounds $\underline{g}_i^{\rm u}$ and $\overline{g}_i^{\rm o}$ are needed for each summand. This reliance on supporting bounds is completely symmetrical with McCormick relaxations \cite{McCormick1976}, which mandate interval bounds to be available alongside the pointwise convex underestimator and concave overestimator to compute product and composition operations. Nevertheless, relying on such interval bounds could weaken the superposition relaxations, albeit still preserving their quadratic convergence \cite{Bompadre2012}. Since the relaxations here are a superposition of univariate functions by construction, it is therefore both beneficial and tractable to parameterize these univariate summands. The situation with McCormick relaxations is different because the convex underestimator and concave overestimator counterparts are non-separable functions in general.
\end{remark}

In the remainder of this section, we discuss two finite parameterizations that consist of piecewise-constant univariates (Section~\ref{sec:pwc}) and continuous piecewise-linear univariates (Section~\ref{sec:pwl}). Alternatively, orthogonal polynomials could also be used, such as Chebyshev series up to a given maximal order as in Chebyshev models \cite{Rajyaguru2017} or even approximating the univariate summands to machine precision \citep{Trefethen2007,Driscoll2014}. 

\subsection{Piecewise-Constant Parameterization}
\label{sec:pwc}

Perhaps the simplest parameterization for the univariate summands entails piecewise-continuous functions on a set partition of the variable range. In particular, superposition relaxations with piecewise-constant summands on an equidistant partition matches the interval superposition models first investigated in \citep{Zha2018,Su2019}.

Formally, any piecewise-constant univariate function $u:X\in\mathbb{IR}\to\mathbb{R}$ over the partition $X_1,\ldots,X_N\in\mathbb{IR}$, with $N>0$, can be encoded by the vector of parameters ${\bm \nu}\coloneqq [\nu_1 \cdots \nu_N]^\intercal\in\mathbb{R}^N$ such that
\begin{align}
\label{eq:pwc}
\forall x\in X_k,\ k=1\ldots N, \quad u(x) = \nu_k.
\end{align}
The complexity of recovering the minimum or maximum of the piecewise-constant function \eqref{eq:pwc} is proportional to the partition size $N$ as it entails a linear traversal of $\bm \nu$ (property P1). Additionally, affine and composition operations on piecewise-constant under- and overestimators are readily automated using interval arithmetic rules \citep{Moore1979} on each subdomain $X_k$, thereby guaranteeing the affine invariance (P2) and uniform approximation (P3) properties. One caveat, however, is that the variables themselves may only be represented with accuracy $\mathcal{O}(\frac{\wid{X}}{N})$ using a piecewise-constant parameterization over an equidistant partition. Therefore, the pointwise convergence of the resulting piecewise-constant superposition relaxations may only be linear in general, despite Theorem~\ref{thm:cvxcomposition_convergence} remaining applicable with piecewise-constant summands for the propagation of quadratically-convergent superposition relaxations through composition.

\subsection{Continuous Piecewise-Linear Parameterization}
\label{sec:pwl}

To retain the quadratic convergence order of a superposition relaxation for sufficiently smooth factorable functions, we next consider parameterizing the univariate summands as continuous piecewise-linear functions and allow for adaptive partitions. Of the possible encodings we choose the segment-based representation, which for a continuous piecewise-linear univariate function $u:X\in\mathbb{IR}\to\mathbb{R}$ over the partition $X_1,\ldots,X_N\in\mathbb{IR}$, with $N>0$, consists of the triplet $(\nu_0,{\bm \delta}, {\bm \sigma})\in\mathbb{R}\times \mathbb{R}^N\times\mathbb{R}^N$ such that
\begin{align}
\label{eq:pwl}
\forall x\in X_k,\ k=1\ldots N, \quad u(x) = \nu_0 + \sum_{j=1}^{k-1} (\sigma_j-\sigma_k)\delta_j +\sigma_k (x-\underline{X}).
\end{align} 
The parameter $\nu_0$ encodes the value of $u$ at $\underline{X}$; $\delta_k$, the width of the subdomain $X_k$; and $\sigma_k$, the slope of $u$ on $X_k$. We prefer this encoding to more conventional ones such as vertex-based here, as it is convenient for the implementation of addition, truncation and composition operations over continuous piecewise-linear functions; see Algorithms~\ref{alg:pwl_addition}--\ref{alg:pwl_composition} below.

Recovering the minimum or maximum of the continuous piecewise-linear function $u$ in \eqref{eq:pwl} entails a simple traversal across the $N+1$ vertices,
\begin{align*}
\underline{u}(X) =& \min \left\{ \nu_0 + \sum_{j=1}^{k-1} \sigma_j\delta_j ~\middle|~ k=1,\ldots, N+1 \right\},\\
\overline{u}(X) =& \max \left\{ \nu_0 + \sum_{j=1}^{k-1} \sigma_j\delta_j ~\middle|~ k=1,\ldots, N+1 \right\}.
\end{align*}
The corresponding computational effort increases linearly with the partition size $N$ (property P1), as was earlier the case with piecewise-constant parameterizations (Section~\ref{sec:pwc}).

\begin{algorithm}[tb]
\caption{Binary sum of continuous piecewise-linear functions using segment-based encoding \label{alg:pwl_addition}}
\small
\smallskip
\begin{center}
\begin{minipage}{1\textwidth}
\textbf{Input:} domain $X\in\mathbb{IR}$, triplets $(\nu_0,{\bm \delta}, {\bm \sigma})\in\mathbb{R}^{2N+1}$ and $(\nu'_0,{\bm \delta}', {\bm \sigma}')\in\mathbb{R}^{2N'+1}$ encoding the continuous piecewise-linear univariates $u$ and $u'$ on $X$\\[0.25em]
\textbf{Main Steps:}
\begin{enumerate}
\setlength{\itemsep}{.15em}\small
\item Set $i=1$, $i'=1$, $\tilde{N}=1$, $\tilde{\nu}_0 = \nu_0 + \nu'_0$, $\tilde{\delta}_1 = \min\{\delta_1,\delta'_1\}$, $\tilde{\sigma}_1 = \sigma_1 + \sigma'_1$
\item If $\delta_i < \delta'_{i'}$:\hspace{2.00em} update $\delta'_{i'} \leftarrow \delta'_{i'} - \delta_i$, $i\leftarrow i+1$ \\
      Else if $\delta_i > \delta'_{i'}$: update $\delta_i \leftarrow \delta_i - \delta'_{i'}$, $i'\leftarrow i'+1$ \\
      Else:\hspace{4.55em} update $i\leftarrow i+1$, $i'\leftarrow i'+1$
\item If $i \leq N$ and $i' \leq N'$: update $\tilde{N}\leftarrow \tilde{N}+1$, set $\tilde{\delta}_{\tilde{N}} = \min\{\delta_i,\delta'_{i'}\}$, $\tilde{\sigma}_{\tilde{N}} = \sigma_i + \sigma'_{i'}$\\ $\hphantom{\text{If $i \leq N$ and $i' \leq N'$:}}$ return to Step 2
\end{enumerate}
\textbf{Output:} triplet $(\tilde{\nu}_0,\tilde{\bm \delta}, \tilde{\bm \sigma})\in\mathbb{R}^{2\tilde{N}+1}$ encoding the continuous piecewise-linear univariate $u+u'$ on $X$
\end{minipage}
\end{center}
\end{algorithm}

\begin{algorithm}[tb]
\caption{Truncation of a continuous piecewise-linear function using segment-based encoding \label{alg:pwl_truncation}}
\small
\smallskip
\begin{center}
\begin{minipage}{1\textwidth}
\textbf{Input:} domain $X\in\mathbb{IR}$, triplet $(\nu_0,{\bm \delta}, {\bm \sigma})\in\mathbb{R}^{2N+1}$ encoding the continuous piecewise-linear univariate $u$ on $X$, constant $c\in\mathbb{R}$\\[0.25em]
\textbf{Main Steps:}
\begin{enumerate}
\setlength{\itemsep}{.15em}\small
\item Set $i=1$, $\tilde{N}=1$, $\tilde{\nu}_0=\max({\nu}_0,c)$, $\xi_{\rm L}=\nu_0$, $\xi_{\rm R}=\nu_0+\sigma_1\delta_1$
\item If $\xi_{\rm L} \leq c$ and $\xi_{\rm R} \leq c$:\hspace{0em} update $\tilde{\delta}_{\tilde{N}}=\delta_i$, $\tilde{\sigma}_{\tilde{N}}=0$ \\
  Else if $\xi_{\rm L} < c$:\hspace{0em} update $\tilde{\sigma}_{\tilde{N}}=0$, $\tilde{N} \leftarrow \tilde{N}+1$, $\tilde{\delta}_{\tilde{N}} = \displaystyle\frac{ \xi_{\rm R} - c }{\sigma_{i}}$, $\tilde{\delta}_{\tilde{N}-1} \leftarrow \delta_{i} - \tilde{\delta}_{\tilde{N}}$, $\tilde{\sigma}_{\tilde{N}}=\sigma_{i}$\\
  Else if $\xi_{\rm R} < c$:\hspace{0em} update $\tilde{\sigma}_{\tilde{N}}=\sigma_i$, $\tilde{N} \leftarrow \tilde{N}+1$,  $\tilde{\delta}_{\tilde{N}} = \displaystyle\frac{ \xi_{\rm R} - c }{\sigma_{i}}$, $\tilde{\delta}_{\tilde{N}-1} \leftarrow \delta_{i} - \tilde{\delta}_{\tilde{N}}$, $\tilde{\sigma}_{\tilde{N}}=0$\\
  Else:\hspace{0em} update: $\tilde{\delta}_{\tilde{N}}=\delta_i$, $\tilde{\sigma}_{\tilde{N}}=\sigma_i$ 
\item If $i < N$: update $i \leftarrow i+1$, $\tilde{N} \leftarrow \tilde{N}+1$,  $\xi_{\rm L} \leftarrow \xi_{\rm R}$, $\xi_{\rm R} \leftarrow \xi_{\rm L} + \delta_i\sigma_i$\\
$\hphantom{\text{If $i < N$:}}$ return to Step 2
\end{enumerate}
\textbf{Output:} triplets $(\tilde{\nu}_0,\tilde{\bm \delta}, \tilde{\bm \sigma})\in\mathbb{R}^{2\tilde{N}+1}$ encoding the continuous piecewise-linear univariate $\max(u,c)$ on $X$
\end{minipage}
\end{center}
\end{algorithm}

The affine invariance property (P2) is also satisfied for continuous piecewise-linear parameterizations. Given the continuous piecewise-linear function $u$ parameterized by $(\nu_0,{\bm \delta}, {\bm \sigma})$, it is easy to see that the affine transformation $au+b$ yields another continuous piecewise-linear function parameterized by $(a\nu_0+b,\bm\delta, a\bm \sigma)$. The sum of two piecewise-linear functions $u+u'$ again yields a continuous piecewise-linear function, obtained by adding the slopes of $u$ and $u'$ over the combination of their partitions. This process is formalized in Algorithm~\ref{alg:pwl_addition}, where the complexity of binary sum increases linearly with the size $N+N'$ of the respective partitions of $u$ and $u'$. Beside affine operations, continuous piecewise-linear functions are also invariant under truncation operation, e.g. $\max(u,c)$ and $\min(u,c)$. An implementation of $\max$ truncation using segment-based encoding is provided in Algorithm~\ref{alg:pwl_truncation}, with a complexity no greater than double the size $N$ of the partition of $u$.

\begin{algorithm}[tb]
\caption{Unary composition of a continuous piecewise-linear function with a convex outer function using segment-based encoding \label{alg:pwl_composition}}
\small
\smallskip
\begin{center}
\begin{minipage}{1\textwidth}
\textbf{Input:} domain $X\in\mathbb{IR}$, triplet $(\nu_0,{\bm \delta}, {\bm \sigma})\in\mathbb{R}^{2N+1}$ encoding the continuous piecewise-linear univariate $u$ on $X$, convex unary function $\varphi$ on $[\underline{u}(X),\overline{u}(X)]$\\[0.25em]
\textbf{Main Steps:}
\begin{enumerate}
\setlength{\itemsep}{.15em}\small
\item Set $i=1$, $\hat{N}=1$, $\check{N}=2$, $\hat{\nu}_0=\check{\nu}_0=\varphi(\nu_0)$, $\xi_{\rm L}=\nu_0$, $\xi_{\rm R}=\nu_0+\sigma_1\delta_1$
\item Compute $\displaystyle \hat{\sigma}_{\hat{N}} = \frac{\varphi(\xi_{\rm R}) - \varphi(\xi_{\rm L})}{\delta_{i}}$, $\check{\sigma}_{\check{N}-1} = \sigma_i \varphi'(\xi_{\rm L})$, $\check{\sigma}_{\check{N}} = \sigma_i \varphi'(\xi_{\rm R})$,\\ $\hphantom{\text{Compute}\ }\hat{\delta}_{\hat{N}} = \delta_{i}$, $\displaystyle \check{\delta}_{\check{N}-1} = \frac{\check{\sigma}_{\check{N}}\delta_i - \varphi(\xi_{\rm R}) + \varphi(\xi_{\rm L})}{\check{\sigma}_{\check{N}} - \check{\sigma}_{\check{N}-1}}$, $\check{\delta}_{\check{N}} = \delta_i - \check{\delta}_{\check{N}-1}$
\item If $\hat{N} < N$: update $i \leftarrow i+1$, $\hat{N}\leftarrow \hat{N}+1$, $\check{N} \leftarrow \check{N}+2$, $\xi_{\rm L} \leftarrow \xi_{\rm R}$, $\xi_{\rm R}\leftarrow \xi_{\rm L} + \sigma_i\delta_i$ \\ $\hphantom{\text{If $\hat{N} < N$:}}$ return to Step 2 
\end{enumerate}
\textbf{Output:} triplets $(\check{\nu}_0,\check{\bm \delta}, \check{\bm \sigma})\in\mathbb{R}^{2\check{N}+1}$ and $(\hat{\nu}_0,\hat{\bm \delta}, \hat{\bm \sigma})\in\mathbb{R}^{2\hat{N}+1}$ encoding a continuous piecewise-linear univariate under- and overestimator of $\varphi\circ u$ on $X$
\end{minipage}
\end{center}
\end{algorithm}

In contrast with an affine or truncation operation, the composition of a continuous piecewise-linear function with a nonlinear outer function is not a continuous piecewise-linear function in general. Instead, the idea is to construct pairs of continuous piecewise-linear functions that bracket such nonlinear composite functions. Algorithm~\ref{alg:pwl_composition} describes one such construction for the composition of a continuous piecewise-linear function $u$ with a convex outer function $\varphi$, where the tangent and secant lines of $\varphi\circ u$ over the partition of $u$ are used as under- and overestimators, respectively. The resulting piecewise-linear overestimator of $\varphi\circ u$ parameterized by $(\hat{\nu}_0,\hat{\bm \delta}, \hat{\bm \sigma})\in\mathbb{R}^{2\hat{N}+1}$ shares the same breakpoints as $u$, whereas the piecewise-linear underestimator parameterized by $(\check{\nu}_0,\check{\bm \delta}, \check{\bm \sigma})\in\mathbb{R}^{2\check{N}+1}$ introduces extra breakpoints at the intersection between adjacent tangent lines, thereby doubling the number of segments compared to $u$. In a variant of Algorithm~\ref{alg:pwl_composition}, one could introduce extra breakpoints in the overestimator of $\varphi\circ u$ as well, or even introduce breakpoints adaptively to keep the approximation error within preset limits like the sandwich algorithm used in polyhedral outer-approximation schemes \citep{Rote1992,Tawarmalani2005}. For composition with a concave outer function $\varphi$, Algorithm~\ref{alg:pwl_composition} can be applied similarly, but assigning the parameterizations $(\hat{\nu}_0,\hat{\bm \delta}, \hat{\bm \sigma})$ and $(\check{\nu}_0,\check{\bm \delta}, \check{\bm \sigma})$ to, respectively, the underestimator and overestimator of $\varphi\circ u$ instead.

\begin{remark}
\label{rmk:never_coarsen}
The resultant triplets computed by any of Algorithms~\ref{alg:pwl_addition}--\ref{alg:pwl_composition} specify a partition of $X$ that is either identical to or a refinement of the partition specified by the input triplet. Moreover, an affine transformation $ax+b$ does not change the partition of $X$. Therefore, for the unary and binary operations considered herein, the propagation of continuous piecewise-linear functions using segment-based encoding always retains or subdivides existing segments, but never coarsens them. 
\end{remark}

The following proposition establishes that Algorithm~\ref{alg:pwl_composition} propagates the quadratic pointwise convergence order of the univariate under- and overestimators under the same regularity conditions for $\varphi$ as in Theorem~\ref{thm:cvxcomposition_convergence}. A complication here is that the number $N$ of segments may vary through the recursive application of composition operations. Instead, we prove the result in terms of the initial number $N_0$ of segments used to partition a variable's domain. Since $N$ can only increase through recursive composition---see Remark~\ref{rmk:never_coarsen}, the relevant segment-width and estimation-error bounds remain controlled with respect to $N_0$.

\begin{proposition}
\label{prop:cvxcomposition_convergence}
Let $u:\mathcal{X}\subset\mathbb{R} \to \mathbb R$ and assume that $\left(u^{\rm u},u^{\rm o}\right)_{X\subset\mathcal{X}}$ is a continuous piecewise-linear estimator, for which each of the $N^{\rm u}\geq N_0$ segments of $u^{\rm u}$ and each of the $N^{\rm o}\geq N_0$ segments of $u^{\rm o}$ has width at most $C_x\frac{\wid{X}}{N_0}$, with constant $C_x>0$ independent of $N_0$ and $X$. Let $\varphi:\mathcal{Z}\to\mathbb{R}$ be continuously differentiable on $\mathcal{Z}\supseteq [\underline{u}^{\rm u}(\mathcal{X}),\overline{u}^{\rm o}(\mathcal{X})]$ with Lipschitz-continuous derivatives, and assume $\varphi$ is convex and non-decreasing. Construct both an underestimator $f^{\rm u}$ of $\varphi\circ u^{\rm u}$ and an overestimator $f^{\rm o}$ of $\varphi\circ u^{\rm o}$ using Algorithm~\ref{alg:pwl_composition}. 
If the estimation error of $\left(u^{\rm u},u^{\rm o}\right)_{X\subset\mathcal{X}}$ satisfies
\[
\sup_{x\in X}\left(u(x)-u^{\mathrm{u}}(x)\right) \leq C_u\left(\frac{\wid{X}}{N_0}\right)^2
\quad \text{and} \quad
\sup_{x\in X}\left(u^{\mathrm{o}}(x)-u(x)\right) \leq C_u\left(\frac{\wid{X}}{N_0}\right)^2
\]
where the constant $C_u$ is independent of $X$ and $N_0$, then the estimation error of $f\coloneqq \varphi\circ u$ by the continuous piecewise-linear estimator $(f^{\rm u},f^{\rm o})_{X\subset\mathcal{X}}$ satisfies
\[
\sup_{x\in X}\bigl(f(x)-f^{\mathrm{u}}(x)\bigr) \leq C_f\left(\frac{\wid{X}}{N_0}\right)^2 
\quad \text{and} \quad
\sup_{x\in X}\bigl(f^{\mathrm{o}}(x)-f(x)\bigr) \leq C_f\left(\frac{\wid{X}}{N_0}\right)^2 
\]
where the constant $C_f$ is again independent of $X$ and $N_0$.
\end{proposition}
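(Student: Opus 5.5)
We split each error into the error inherited from the inner estimator and the error added by Algorithm~\ref{alg:pwl_composition}. For the underestimator,
\begin{align*}
f(x)-f^{\rm u}(x) = \bigl(\varphi(u(x))-\varphi(u^{\rm u}(x))\bigr) + \bigl(\varphi(u^{\rm u}(x))-f^{\rm u}(x)\bigr),
\end{align*}
and analogously for the overestimator with $u^{\rm o}$. Since $\varphi$ has Lipschitz-continuous derivative on the compact set $\mathcal{Z}$, it is Lipschitz there with some constant $L_\varphi$. Because $\varphi$ is nondecreasing, the first bracket is nonnegative and at most $L_\varphi\,(u(x)-u^{\rm u}(x))\le L_\varphi C_u(\wid{X}/N_0)^2$. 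The same bound holds for $\varphi(u^{\rm o}(x))-\varphi(u(x))$. So it remains to control the purely univariate piecewise-linear approximation error of Algorithm~\ref{alg:pwl_composition} on each segment of $u^{\rm u}$ (resp.\ $u^{\rm o}$).

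Fix a segment $X_k$ of $u^{\rm u}$, with width $\delta_k\le C_x\wid{X}/N_0$ and slope $\sigma_k$. On it $u^{\rm u}$ is affine, so $\varphi\circ u^{\rm u}$ is convex on $X_k$. Its derivative $\sigma_k\varphi'(u^{\rm u}(\cdot))$ is Lipschitz with constant $\sigma_k^2 L_{\varphi'}$, where $L_{\varphi'}$ is the Lipschitz constant of $\varphi'$. The algorithm uses the two endpoint tangents, maximized, as the underestimator. Their maximum lies above each tangent, so at any $x\in X_k$ we can compare against the tangent at the nearer endpoint. Taylor's theorem with Lipschitz derivative then bounds the gap by $\tfrac12\sigma_k^2L_{\varphi'}\delta_k^2$. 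The overestimator on $X_k$ is the secant, and the standard linear-interpolation error bound gives at most $\tfrac18\sigma_k^2L_{\varphi'}\delta_k^2$.

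Both bounds have the form $C\,(\sigma_k\delta_k)^2$. Note that $|\sigma_k\delta_k|$ is just the variation of $u^{\rm u}$ over $X_k$. So the proof reduces to showing
\begin{align*}
|\sigma_k|\,\delta_k \le C'\,\wid{X}/N_0
\end{align*}
uniformly in $X$ and $N_0$. Once this holds, summing the two contributions gives $C_f = L_\varphi C_u + \tfrac12 L_{\varphi'}C'^2$, which depends on neither $X$ nor $N_0$.

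This slope bound is the main obstacle: the hypotheses bound segment widths but say nothing directly about the slopes $\sigma_k$. I would first try to obtain it from the error hypothesis together with a Lipschitz assumption on $u$ (implicit, as in Lemma~\ref{lem:estimator_width}). On $X_k$ the increment of $u^{\rm u}$ equals the increment of $u$ plus the change in the error $u-u^{\rm u}$. The error is nonnegative and at most $C_u(\wid{X}/N_0)^2$, so the increment of $u^{\rm u}$ is at most $L_u\delta_k + C_u(\wid{X}/N_0)^2$. Since $\wid{X}\le\wid{\mathcal{X}}$ and $N_0\ge 1$, this is at most $(L_uC_x + C_u\wid{\mathcal{X}})\,\wid{X}/N_0$, which is the required $C'$.

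If no Lipschitz property of $u$ is available, I would instead read the claim as a statement about parameterizations generated by the arithmetic itself, where slopes stay bounded because of Remark~\ref{rmk:never_coarsen} and the initialization $x\mapsto x$. In that case I would make the slope-boundedness explicit as a standing assumption. The overestimator case is symmetric, with $u^{\rm o}$ replacing $u^{\rm u}$.
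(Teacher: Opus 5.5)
Your decomposition is the paper's. The inherited term $\varphi\circ u-\varphi\circ u^{\rm u}$ is bounded by $L_\varphi C_u(\wid{X}/N_0)^2$, and the error added by Algorithm~\ref{alg:pwl_composition} is bounded segment by segment through a Taylor argument. On that second term your version is the correct one, and the paper's proof has a gap.

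The paper applies Lemma~\ref{lem:unicvxtangentsecant} with the $x$-width $\wid{X_k}$ and the constant $L_{\varphi'}$, giving $\tfrac32 L_{\varphi'}\wid{X_k}^2$. That lemma is stated in the argument of $\varphi$, where the relevant interval is $u^{\rm u}(X_k)$, whose width is $|\sigma_k|\,\wid{X_k}$. Equivalently, as you note, $(\varphi\circ u^{\rm u})'$ has Lipschitz constant $\sigma_k^2L_{\varphi'}$ on $X_k$. The factor $\sigma_k^2$ is dropped, and nothing in the stated hypotheses controls it.

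Your slope bound $|\sigma_k|\delta_k\le C'\wid{X}/N_0$ is exactly the missing step, and the statement fails without it. Here is a counterexample:
\begin{itemize}
\item On $\mathcal{X}=[0,1]$, let $u$ be the continuous increasing piecewise-affine function with $u(0)=0$ and slope $2^{j/2}$ on $[2^{-j-1},2^{-j}]$.
\item Take $X=[2^{-j-1},2^{-j}]$, $N_0=1$, and $u^{\rm u}=u^{\rm o}=u$. This is a single exact segment, so the hypotheses hold with $C_x=1$ and any $C_u$.
\item Take $\varphi(z)=z^2$ on $\mathcal{Z}=[0,u(1)]$.
\end{itemize}
On $X$, $\varphi\circ u$ is a quadratic with leading coefficient $2^j$. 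Both the maximum of the two endpoint tangents and the secant miss it at the midpoint by $2^{j-2}\wid{X}^2$, so no $C_f$ independent of $X$ exists.

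Your derivation of the slope bound from a Lipschitz constant $L_u$ is correct. It uses $0\le u-u^{\rm u}\le C_u(\wid{X}/N_0)^2$ at both endpoints of $X_k$ together with $\wid{X}\le\wid{\mathcal{X}}<\infty$. It also mirrors how Lemma~\ref{lem:estimator_width} is used in Theorem~\ref{thm:cvxcomposition_convergence}. Your argument therefore proves the proposition once the statement adds the hypothesis that $u$ is Lipschitz on a bounded $\mathcal{X}$, or that the slopes of $u^{\rm u}$ and $u^{\rm o}$ are uniformly bounded.

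Your constants are also sharper than the paper's: $\tfrac12$ from the nearer-endpoint tangent, and $\tfrac18$ for the secant. The $\tfrac18$ bound holds for merely Lipschitz derivatives, since $|g-s|\le\tfrac{L}{2}(x-a)(b-x)$ on $[a,b]$.

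One caveat on your fallback: Remark~\ref{rmk:never_coarsen} controls segment widths, not slopes. For summands produced by the arithmetic, slope bounds would come from the chain rule and bounds on $\varphi'$ over compact ranges. So stating slope-boundedness as an explicit assumption, as you propose, is the right call.
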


Notice that the counterpart of Proposition~\ref{prop:cvxcomposition_convergence} for a convex and non-increasing outer function $\varphi$ follows by symmetry, via the application of Algorithm~\ref{alg:pwl_composition} to construct both an underestimator $f^{\rm u}$ of $\varphi\circ u^{\rm o}$ and an overestimator $f^{\rm o}$ of $\varphi\circ u^{\rm u}$.

The following lemma is used in the proof of Proposition~\ref{prop:cvxcomposition_convergence}. It shows that, in the construction of Algorithm~\ref{alg:pwl_composition}, the approximation error on each partition element is guaranteed to shrink quadratically with the width of that element.

\begin{lemma}   
\label{lem:unicvxtangentsecant}
 Let $\varphi:Z\in\mathbb{IR}\to\mathbb{R}$ be convex or concave and continuously-differentiable on $Z$ with Lipschitz-continuous derivatives. For the secant line $s_\varphi:Z\to\mathbb{R}$ connecting both endpoints $\varphi(\underline{Z})$ and $\varphi(\overline{Z})$ and the tangent line $t_{\varphi,z^\ast}:Z\to\mathbb{R}$ to $\varphi$ at any given point $z^\ast\in Z$, we have
\begin{align}
\label{eq:unitangent}
\sup_{z \in Z} \left|\varphi(z)-t_{\varphi,z^\ast}(z)\right| &\leq \frac{L_{\varphi'}}{2} \wid{Z}^{2}\\
\label{eq:unisecant}
\text{and}\quad \sup_{z \in Z} \left|\varphi(z)-s_\varphi(z)\right| &\leq \frac{3L_{\varphi'}}{2} \wid{Z}^{2} \, , 
\end{align}
where $L_{\varphi'}$ is the Lipschitz constant of $\varphi'$ on $Z$.
\end{lemma}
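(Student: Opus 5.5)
Both bounds follow from the first-order Taylor expansion with integral remainder, together with the Lipschitz bound on $\varphi'$. Convexity or concavity is not actually needed for the estimates; it only fixes the sign of the errors.

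\emph{Tangent bound \eqref{eq:unitangent}.} For any $z,z^\ast\in Z$ I write
\begin{align*}
\varphi(z)-t_{\varphi,z^\ast}(z) = \varphi(z)-\varphi(z^\ast)-\varphi'(z^\ast)(z-z^\ast) = \int_0^1 \bigl(\varphi'(z^\ast+s(z-z^\ast))-\varphi'(z^\ast)\bigr)(z-z^\ast)\,ds .
\end{align*}
The Lipschitz continuity of $\varphi'$ bounds the integrand by $L_{\varphi'}\,s\,|z-z^\ast|^2$. Integrating over $s\in[0,1]$ gives $|\varphi(z)-t_{\varphi,z^\ast}(z)|\leq \frac{L_{\varphi'}}{2}|z-z^\ast|^2$. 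Since both $z$ and $z^\ast$ lie in $Z$, we have $|z-z^\ast|\leq \wid{Z}$, which gives \eqref{eq:unitangent}.

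\emph{Secant bound \eqref{eq:unisecant}.} I compare the secant to the tangent at $z^\ast=\underline{Z}$ using the triangle inequality:
\begin{align*}
|\varphi(z)-s_\varphi(z)| \leq |\varphi(z)-t_{\varphi,\underline{Z}}(z)| + |t_{\varphi,\underline{Z}}(z)-s_\varphi(z)| .
\end{align*}
The first term is at most $\frac{L_{\varphi'}}{2}\wid{Z}^2$ by the tangent bound.

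For the second term, $t_{\varphi,\underline{Z}}$ and $s_\varphi$ are both affine and agree at $\underline{Z}$. Their difference is therefore $(\varphi'(\underline{Z})-m)(z-\underline{Z})$, where $m \coloneqq (\varphi(\overline{Z})-\varphi(\underline{Z}))/\wid{Z}$ is the secant slope. By the mean value theorem, $m=\varphi'(\xi)$ for some $\xi\in Z$. The Lipschitz bound then gives $|\varphi'(\underline{Z})-m|\leq L_{\varphi'}\wid{Z}$, so the second term is at most $L_{\varphi'}\wid{Z}^2$. Adding the two terms yields the constant $\frac{3L_{\varphi'}}{2}$ claimed in \eqref{eq:unisecant}.

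A sharper constant $\frac{L_{\varphi'}}{8}$ could be obtained by interpolation-error arguments, but it is not needed. If $\wid{Z}=0$, the secant is understood as the constant $\varphi(\underline{Z})$, and both bounds hold trivially.

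The only point needing care is making sure every intermediate point, such as $\xi$ and $z^\ast+s(z-z^\ast)$, stays inside $Z$, so that the Lipschitz constant on $Z$ applies. This holds because $Z$ is an interval. I therefore expect no real obstacle in this proof.
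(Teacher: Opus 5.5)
Your proof is correct, and it uses the same splitting as the paper's proof. You bound the error against the tangent at $\underline{Z}$ by $\frac{L_{\varphi'}}{2}\wid{Z}^2$, and add the slope mismatch $|\varphi'(\underline{Z})-\varphi'(\xi)|$ from the mean value theorem times a length at most $\wid{Z}$; this gives the same constant $\frac{3L_{\varphi'}}{2}$.

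The difference is where the splitting is applied. The paper first reduces to the single mean-value point ($\zeta$ there, your $\xi$), at which $|\varphi-s_\varphi|$ is maximal. It then splits $\varphi(\zeta)-s_\varphi(\zeta)$. That maximality property, cited from Rote, is the only place convexity or concavity enters.

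You split $\varphi(z)-s_\varphi(z)$ at every $z\in Z$ directly, so the maximality step is unnecessary. Your argument therefore shows the lemma holds for any $\varphi$ with Lipschitz-continuous derivative on $Z$, not just convex or concave ones. Your first-order integral remainder also avoids the $\varphi''$ in the paper's Taylor formula, which under the stated hypotheses exists only almost everywhere.

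The paper's detour through the maximizer tells you where the worst-case secant gap occurs, which fits the sandwich-algorithm setting it cites. It is not needed for the estimate itself.
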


\begin{proof}
From the regularity assumptions, second-order Taylor expansion of $\varphi$ at any point $z^\ast\in Z$ is such that
\begin{align*}
\varphi\left(z\right) =\ & \varphi\left(z^\ast\right) + \varphi'\left(z^\ast\right) (z - z^\ast) + \int_{z^\ast}^{z} \varphi''(\zeta)\, \left(z-\zeta\right)\, d\zeta,
\end{align*}
from which we have
\begin{align}
\label{eq:uniineq}
\forall z\in Z, \quad \left|\varphi\left(z\right) - \varphi\left(z^\ast\right) - \varphi'\left(z^\ast\right) (z - z^\ast)\right| \leq \frac{L_{\varphi'}}{2}\, \wid{Z}^2 .
\end{align}
It is immediate, therefore, that the tangent line $t_{\varphi,z^\ast} \coloneqq \varphi\left(z^\ast\right) + \varphi'\left(z^\ast\right) (z - z^\ast)$ satisfies \eqref{eq:unitangent}. As for the secant line, we have
\begin{align*}
s_\varphi(z) \coloneqq\ & \varphi(\underline{Z}) + \frac{\varphi(\overline{Z})-\varphi(\underline{Z})}{\wid{Z}} \left(z-\underline{Z}\right)
= \varphi(\underline{Z}) + \varphi'(\zeta) \left(z-\underline{Z}\right)
\end{align*}
for some $\zeta\in Z$, which is a direct consequence of the mean-value theorem. For a convex or concave $\varphi$ on $Z$ in particular, the distance between $\varphi$ and its secant $s_\varphi$ is maximal at $\zeta$ \citep{Rote1992}, and it then follows from \eqref{eq:uniineq} and Lipschitz continuity of $\varphi'$ that
\begin{align*}
\forall z\in Z, \quad \left|\varphi\left(z\right) - s_\varphi\left(z\right)\right| =\ & \left|\varphi\left(z\right) - \varphi(\underline{Z}) - \varphi'(\zeta) \left(z-\underline{Z}\right)\right|\\
\leq\ & \left|\varphi\left(\zeta\right) - \varphi(\underline{Z}) - \varphi'(\underline{Z}) \left(\zeta-\underline{Z}\right)\right| + \left|\varphi'\left(\underline{Z}\right) - \varphi'(\zeta)\right|\: \left|\zeta-\underline{Z}\right|\\
\leq\ & \frac{3L_{\varphi'}}{2} \wid{Z}^2,
\end{align*}
thereby proving \eqref{eq:unisecant}.
\end{proof}

\begin{proof}[Proof of Proposition~\ref{prop:cvxcomposition_convergence}]
We proceed by decomposing the underestimation error bound as 
\begin{align}
\sup_{x \in X} \left|\varphi \circ u(x)-f^{\rm u}(x)\right| \leq & \sup_{x \in X} \left| \varphi \circ u(x) - \varphi\circ u^{\rm u}(x) \right| + \sup_{x \in X} \left| \varphi\circ u^{\rm u}(x) - f^{\rm u}(x)\right|\label{eq:unicvxunder}
\end{align}
and similarly for the overestimation error bound.
Since $\varphi$ is continuously differentiable on $Z$ and denoting its Lipschitz constant by $L_\varphi$, it follows by assumption on the estimation error of $\left(u^{\rm u},u^{\rm o}\right)_{X\subset\mathcal{X}}$ that
\begin{align}
\sup_{x \in X} \left| \varphi \circ u(x) - \varphi\circ u^{\rm u}(x) \right| \leq\ & L_\varphi \sup_{x \in X} \left| u(x) - u^{\rm u}(x) \right| \leq  L_\varphi C_u\left(\frac{\wid{X}}{N_0}\right)^2. \label{eq:unicvxunder2}
\end{align}
Next, we recall that Algorithm~\ref{alg:pwl_composition} proceeds segment-by-segment to construct the continuous piecewise-linear under- and overestimators $f^{\rm u}$, $f^{\rm o}$. 
For $k \in\{1,\ldots,N^{\rm u}\}$, the width of the $k$th segment $X_k$ supporting $u^{\mathrm u}$ is so that $\wid{X_k} \leq C_x\frac{\wid{X}}{N_0}$ by assumption.
By Lemma~\ref{lem:unicvxtangentsecant}, the estimation error of $\varphi\circ u^{\rm u}$ on $X_k$ is quadratic in its width $\wid{X_k}$ when $\varphi$ is continuously differentiable with Lipschitz-continuous derivatives. 
Letting $L_{\varphi'}$ denote the Lipschitz constant of $\varphi'$ on $\mathcal Z$, and gathering the above results gives  
\[
\sup_{x \in X} \left| \varphi\circ {u}^{\rm u}(x) - f^{\rm u}(x)\right| \leq \max_{k\in\{1,\ldots,N\}}\frac{3L_{\varphi'}}{2}\wid{X_k}^2 \leq \frac{3L_{\varphi'}C_x^2}{2}\left(\frac{\wid{X}}{N_0}\right)^2 \, .
\]
Combining this bound with that in \eqref{eq:unicvxunder2}, we obtain the following bound of the right-hand side of~\eqref{eq:unicvxunder}
\[
\sup_{x \in X} \left| \varphi \circ {{u}}(x) - \varphi\circ {u}^{\rm u}(x) \right| + \sup_{x \in X} \left| \varphi\circ {u}^{\rm u}(x) - f^{\rm u}(x)\right| \leq \left(L_\varphi C_u +\frac{3L_{\varphi'}C_x^2}{2}\right) \left(\frac{\wid{X}}{N_0}\right)^2 \, .
\]
Finally, noting that the Lipschitz constants $L_{\varphi}$ and  $L_{\varphi'}$ simply depend on $\mathcal Z$ rather than $X$ and $N_0$, and by the assumption that the constants $C_x$ and $C_u$ are independent of $X$ and $N_0$, we conclude that the constant
$C_f \coloneqq L_\varphi C_u +\frac{3L_{\varphi'}C_x^2}{2}$ is again independent of $X$ and $N_0$. 
\end{proof}

Finally, it is clear that any variable can be encoded exactly by a continuous piecewise-linear function. In analogy with classical approximation theory, where any continuous univariate function can be uniformly approximated by a sequence of continuous piecewise-linear functions with increasingly fine partitions, the recursive application of Algorithms~\ref{alg:pwl_addition}--\ref{alg:pwl_composition} can therefore construct pairs of continuous piecewise-linear functions that bracket sufficiently regular univariate functions within any desired finite precision (property P3). The key enabling property is that Algorithms~\ref{alg:pwl_addition}--\ref{alg:pwl_composition} only retain or subdivide existing segments, but never coarsen them---see Remark~\ref{rmk:never_coarsen}. Hence, every newly generated segment remains bounded by $C_x\frac{\wid{X}}{N_0}$, and the segment-width hypothesis required for the next recursive application follows immediately.

In practice, partitioning the domain of the variables thus provides a means of reducing the estimation error introduced by the composition operations. But this refinement also creates a tradeoff with the computational complexity of a superposition relaxation, which we illustrate in the following case studies.

\section{Computational Studies}
\label{sec:casestudies}

In this section, the construction and convergence of superposition relaxations are studied numerically for a selection of factorable functions. These relaxations are furthermore compared with McCormick relaxations \citep{McCormick1976}, including their generalization to enable product, minimum and maximum operations using multivariate composition per \cite{Tsoukalas2014}.

All the computations that led to these results are performed with the open-source library {\sf MC\textsuperscript{++}} (version 5.0) \citep{MCPP} which features C\textsuperscript{++} classes for the evaluation of superposition relaxations for factorable expressions, alongside McCormick relaxations and several other arithmetics. These arithmetics can also be accessed through the Python-binding interface {\sf PyMC\textsuperscript{++}}.\footnote{{\sf PyMC\textsuperscript{++}} is available on {\sf PyPI} under the name {\sf pymcpp} (\url{https://pypi.org/project/pymcpp/}).} For higher flexibility, the superposition relaxation arithmetic was implemented in a templated C\textsuperscript{++} class, with the template type specializing to various parameterizations and operations on the univariate summands, including piecewise-constant and continuous piecewise-linear parameterizations. All of these arithmetics can be applied either on-the-fly through operator overloading, or via the construction of a computational graph (DAG) and its subsequent evaluation to take advantage of common subexpressions. Note also that any round-off caused by operations between the univariate summands in a superposition relaxation, or to evaluate the convex underestimator, concave overestimator and their subgradients in a McCormick relaxation, are not accounted for in the current implementation.

\subsection{Case Study 1}

Let $\mathcal{X}=[-1,2]\times[-2,1]$ and consider the factorable function $f:\mathcal{X}\to\mathbb{R}$ with
\begin{align}
\label{eq:CS1}
f(\bm x) \coloneqq x_1 x_2 \left( x_1 \left(\exp(x_1)-\exp(-x_1)\right) + x_2 \left(\exp(x_2)-\exp(-x_2)\right) \right).
\end{align}
Although it is not separable, notice that \eqref{eq:CS1} presents some separable structures in the univariate terms  $x_1 \left(\exp(x_1)-\exp(-x_1)\right)$ and $x_2 \left(\exp(x_2)-\exp(-x_2)\right)$.

\begin{figure}[tbp]
\centering
\includegraphics[width=0.98\linewidth]{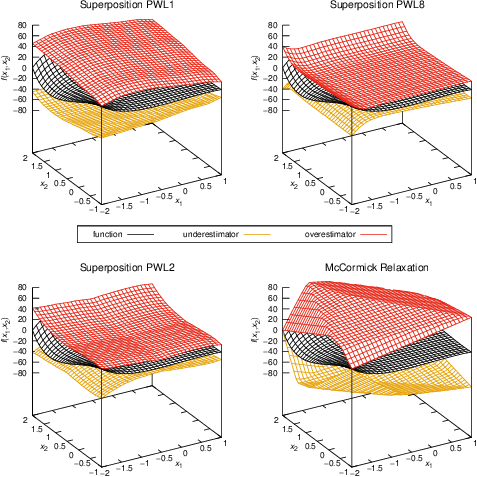}
\caption{Comparison between relaxations of the test function \eqref{eq:CS1}. The superposition relaxations are obtained with piecewise-linear univariate estimators initialized with $N_{\rm ini}=1$ (top-left), $2$ (bottom-left) or $8$ (top-right) equidistant segments for the variables $x_1$ and $x_2$. The McCormick relaxation (bottom-right) involves recomputing the values of both convex underestimator and concave overestimator on a grid.}
\label{fig:CS1_plot}
\end{figure}

Figure~\ref{fig:CS1_plot} compares the superposition relaxations of the test function \eqref{eq:CS1} computed with continuous piecewise-linear summands and initializing the two variables $x_1$ and $x_2$ over an equidistant partition with $N_{\rm ini}=1$, $2$ or $8$ segments. Increasing the initial partition size of both variables noticeably tightens both superposition under- and overestimators, with little further improvement observed beyond 8 segments. This improved tightness comes at the cost of added complexity for the four univariate summands, each containing 9 segments for $N_{\rm ini}=1$, increasing to 16 segments for $N_{\rm ini}=2$, and to 61 segments for $N_{\rm ini}=8$. Recall that these extra segments/breakpoints are introduced adaptively by Algorithm~\ref{alg:pwl_composition} when composing the piecewise-linear summands with nonlinear terms, both exponential and square power terms in this case. Figure~\ref{fig:CS1_plot} also shows the McCormick relaxation of \eqref{eq:CS1}, which uses the multivariate composition rule for product operations here. Although the convex underestimator and concave overestimator appear to be exact at several edges of the domain $\mathcal{X}$, these estimators are also found to be significantly weaker than their superposition counterparts over most of the domain $\mathcal{X}$. A clear advantage of superposition relaxations here lies in their superior ability to describe the variations and nonconvexities present in \eqref{eq:CS1}, enabled by the flexibility of the univariate summand parameterization.

\begin{figure}[tbp]
\centering
\includegraphics[width=0.98\linewidth]{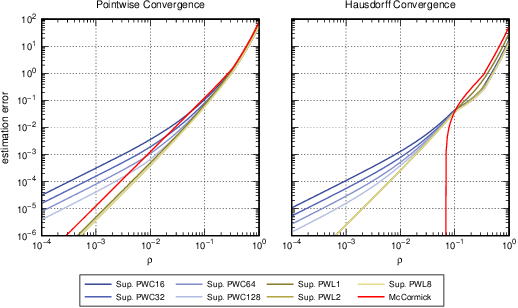}
\caption{Comparison between the convergence rate of superposition relaxations and McCormick relaxations for the test function \eqref{eq:CS1}, in the pointwise sense (left) and Hausdorff sense (right). The superposition relaxations with piecewise-constant (PWC) univariate estimators consider equal subdivisions on $16$, $32$, $64$ or $128$ segments for the variables $x_1$ and $x_2$. The superposition relaxations with piecewise-linear (PWL) univariate summands are initialized with $N_{\rm ini}=1$, $2$ or $8$ equidistant segments for the variables $x_1$ and $x_2$.}
\label{fig:CS1_rate}
\end{figure}

A convergence analysis for various relaxations of the test function \eqref{eq:CS1} is presented in Figure~\ref{fig:CS1_rate}, where the log-log convergence plots report the estimation error for a varying contraction ratio $\rho\in(0,1]$, corresponding to the reduced domain $\bm X_\rho \coloneqq \rho\mathcal{X} + (1-\rho)\bm x^\ast$ centered around the mid-point $\bm x^\ast=[-0.5\ 0.5]^\intercal$. The same three superposition relaxations with continuous piecewise-linear summands for $N_{\rm ini}=1$, $2$ and $8$ present a very similar convergence behavior, all showing the expected quadratic convergence order in both the pointwise (left plot) and Hausdorff (right plot) sense. The comparison with McCormick relaxations confirms that superposition relaxations remain tighter on smaller domains in the pointwise view; while McCormick relaxations eventually regain the advantage in the Hausdorff sense on smaller domains, as they estimate the range of \eqref{eq:CS1} exactly for $\rho\lesssim 7\times 10^{-2}$. Also displayed on Figure~\ref{fig:CS1_rate} for comparison, are the superposition relaxations with piecewise-constant summands on fixed variable grids with $N_{\rm grid}=16$, $32$, $64$ and $128$ segments. These relaxations behave similarly to their piecewise-linear counterparts on wider domains, but their convergence rates start departing from quadratic order below $\rho\lesssim 10^{-2}$ where they become linear only. Nevertheless, increasing the partition size for the two variables provides a means of delaying this transition to linear order.

\subsection{Case Study 2}

Consider the function $f:\mathcal{X}\subset\mathbb{R}^n\to\mathbb{R}$ defined on the hypercube $\mathcal{X}=[0,10]^n$ by
\begin{align}
\label{eq:CS2}
f(\bm x) \coloneqq\ & \sum_{k=1}^m \frac{1}{\displaystyle\sum_{i=1}^n (x_i-\alpha_{i,k})^2+\beta_k},
\end{align}
commonly known as Shekel's foxholes in global optimization benchmarks \citep{Naser2025}. This function has $m$ local maxima in $\mathbb{R}^n$ and a unique global maximum at $x_i^\ast=4$, e.g., for $m=10$, $n=2p$ with $p>0$, and the following parameter values for $0<j\leq p$,
\begin{align*}
{\bm\alpha}_{2j} =\ & [4\ 1\ 8\ 6\ 3\ 2\ 5\ 8\ 6\ 7]^\intercal\\
{\bm\alpha}_{2j-1} =\ & [4\ 1\ 8\ 6\ 7\ 9\ 3\ 1\ 2\ 3.6]^\intercal\\
{\bm\beta} =\ & [0.1\ 0.2\ 0.2\ 0.4\ 0.4\ 0.6\ 0.3\ 0.7\ 0.5\ 0.5]^\intercal.
\end{align*}
Like \eqref{eq:CS1} before, the function \eqref{eq:CS2} is not separable but it presents a clear separable structure in the denominator $\sum_{i=1}^n (x_i-\alpha_{i,k})^2$. This case is also useful to analyze the behavior of superposition relaxations in higher dimensions.

\begin{figure}[tbp]
\centering
\includegraphics[width=0.98\linewidth]{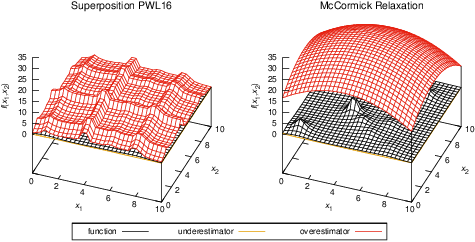}
\caption{Comparison between relaxations of the test function \eqref{eq:CS2}. The superposition relaxation is obtained with piecewise-linear univariate estimators initialized with $N_{\rm ini}=16$ (left). The McCormick relaxation (right) involves recomputing the values of both convex underestimator and concave overestimator on a grid.}
\label{fig:CS2_plot}
\end{figure}

The left plot on Figure~\ref{fig:CS2_plot} shows the superposition relaxation of \eqref{eq:CS2} for $n=2$ dimensions, computed with continuous piecewise-linear summands and initialized with an equidistant partition over $N_{\rm ini}=16$ segments. This relaxation captures well the main peaks in the Shenkel function, including the global maximum at ${\bm x}^\ast=[4\ 4]^\intercal$. In particular, it is significantly tighter than its McCormick relaxation counterpart as shown on the right plot. However, because of its separable nature, the superposition relaxation also presents extra (parasitic) peaks, for instance around ${\bm x}=[1\ 4]^\intercal$ and ${\bm x}=[4\ 1]^\intercal$ where the Shenkel function does not have such peaks (compare with the plot on the right). With regard to complexity, the two univariate underestimator summands comprise 179 segments and the overestimator summands 46--50 segments for $N_{\rm ini}=16$, which are added adaptively when composing the piecewise-linear summands with inverse and square power terms using Algorithm~\ref{alg:pwl_composition}. Interestingly, with $n=6$ dimensions (and again $N_{\rm ini}=16$), the numbers of segments in the six univariate underestimator/overestimator summands remain identical to the $n=2$ case.

\begin{figure}[tbp]
\centering
\includegraphics[width=0.98\linewidth]{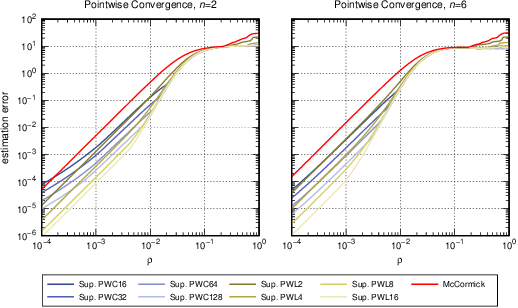}
\caption{Comparison between the convergence rate of superposition relaxations and McCormick relaxations for the test function \eqref{eq:CS2}, in the pointwise sense, for $n=2$ (left) and $n=6$ (right) dimensions. The superposition relaxations with piecewise-constant (PWC) univariate estimators consider equal subdivisions on $16$, $32$, $64$ or $128$ segments for the variables $x_i$. The superposition relaxations with piecewise-linear (PWL) univariate summands are initialized with $N_{\rm ini}=2$, $4$, $8$ or $16$ equidistant segments for the variables $x_i$.}
\label{fig:CS2_rate}
\end{figure}

The convergence analysis in Figure~\ref{fig:CS2_rate} for various relaxations of the test function \eqref{eq:CS2} contracts the variable domain around the global maximum $\bm x^\ast$. The general trends are similar for either $n=2$ or $6$ dimensions, distinguishing three main parts. In the central part around $\rho\approx 10^{-1}$, the pointwise estimation error is comparable amongst the various relaxation schemes. In the part with smaller subdomains $\rho\ll 10^{-1}$, the superposition relaxations significantly reduce the pointwise estimation error compared to their McCormick counterparts, mainly due to the ability of the superposition under- and overestimators to describe nonconvex functions. In particular, the estimation error benefits significantly from refining the variable partition both with piecewise-constant (e.g. PWC128) or continuous piecewise-linear (e.g. PWL16) summands, although the former ultimately transition to a linear convergence rate while the latter -- and the McCormick relaxations too -- are quadratically convergent since \eqref{eq:CS2} is sufficiently smooth. In the part with larger subdomains $\rho\gg 10^{-1}$, the superposition relaxations again yield tighter relaxations, especially those with larger partitions for which the estimation error remains about constant---a very desirable property for a relaxation scheme to have on wider domains.

\subsection{Case Study 3}

Let $\mathcal{X}=[-3,3]^2$ and consider the function $f:\mathcal{X}\to\mathbb{R}$ with
\begin{align}
\label{eq:CS3}
f(\bm x) \coloneqq\ & 3(1-x_1)^2\exp\left(-x_1^2-(x_2+1)^2\right)-10\left(\frac{x_1}{5}-x_1^3-x_2^5\right)\exp\left(-x_1^2-x_2^2\right) \nonumber\\ & -\frac{1}{3}\exp\left(-(x_1+1)^2-x_2^2\right),
\end{align}
often referred to as {\sf MATLAB}'s peak function. Unlike \eqref{eq:CS1} and \eqref{eq:CS2}, this factorable function does not present obvious separable structure.

\begin{figure}[tbp]
\centering
\includegraphics[width=0.98\linewidth]{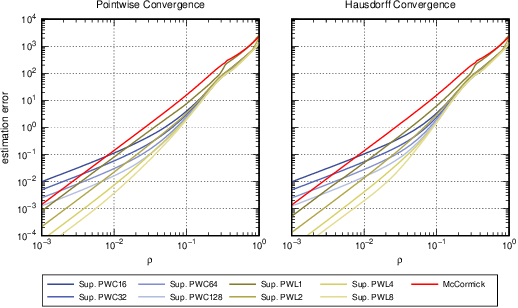}
\caption{Comparison between the convergence rate of superposition relaxations and McCormick relaxations for the test function \eqref{eq:CS3}, in the pointwise sense (left) and Hausdorff sense (right). The superposition relaxations with piecewise-constant (PWC) univariate estimators consider equal subdivisions on $16$, $32$, $64$ or $128$ segments for the variables $x_1$ and $x_2$. The superposition relaxations with piecewise-linear (PWL) univariate summands are initialized with $N_{\rm ini}=1$, $2$, $4$ or $8$ equidistant segments for the variables $x_1$ and $x_2$.}
\label{fig:CS3a_rate}
\end{figure}

Both the superposition and McCormick relaxations over the full domain $\mathcal{X}$ largely overestimate the actual function range $[-6.55,8.11]$, with estimation errors as large as 1300--2600. This large overestimation is attributed to the complex algebraic expressions in \eqref{eq:CS3}, leading to a dependency problem. However, the overestimation decreases quickly as the variable domain shrinks. Figure~\ref{fig:CS3a_rate} compares the convergence rates of various relaxations as the domain $\mathcal{X}$ is contracted around the global maximum of \eqref{eq:CS3} at $\bm x^* \approx [-0.0106\ 1.5803]^\intercal$---a similar convergence behavior is observed around other extrema of \eqref{eq:CS3}. All four superposition relaxations with continuous piecewise-linear summands enjoy a quadratic convergence order in both the pointwise (left plot) and Hausdorff (right plot) sense. The effect of partitioning each variable domain over $N_{\rm ini}=1$, $2$, $4$ and $8$ equidistant segments is found to be significant, especially over smaller domains where the overestimation eror decreases by 10 fold or more between $N_{\rm ini}=1$ and $8$. The superposition relaxations with piecewise constant sumands behave similarly to their piecewise linear counterparts on large domains, then start diverging below $\rho\lesssim 10^{-1}$. In particular, increasing the (fixed) partition size of the piecewise constant sumands -- here from $N_{\rm grid}=16$ to $128$ segments -- is effective at delaying the onset of linear convergence. Lastly, the comparison with McCormick relaxations shows that superposition relaxations are consistently tighter, in both the pointwise and Hausdorff sense, which is remarkable given the lack of separable structures in \eqref{eq:CS3}.
 

\begin{figure}[tbp]
\centering
\includegraphics[width=0.98\linewidth]{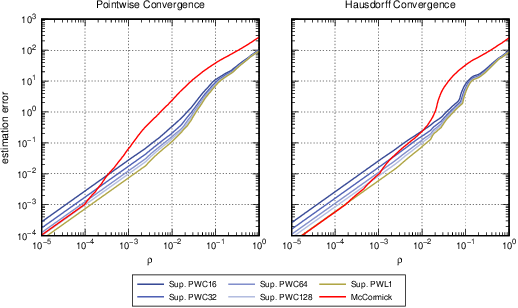}
\caption{Comparison between the convergence rate of superposition relaxations and McCormick relaxations for an ANN with 4 hidden layers of 40 neurons each and ReLU activation functions describing the test function \eqref{eq:CS3}, in the pointwise sense (left) and Hausdorff sense (right). The superposition relaxations with piecewise-constant univariate estimators consider equal subdivisions on $16$, $32$, $64$ or $128$ segments for the variables $x_1$ and $x_2$. The superposition relaxations with piecewise-linear univariate summands are initialized with a single segment for the variables $x_1$ and $x_2$.}
\label{fig:CS3b_rate}
\end{figure}

Next, we conduct another convergence analysis, this time on an artificial neuron network (ANN) surrogate of the test function \eqref{eq:CS3}. The selected ANN architecture comprises 4 hidden layers of 40 neurons each with ReLU activation functions in each layer, and the ANN was trained using {\sf scikit-learn} with samples of \eqref{eq:CS3} on a $600\times 600$ grid of the domain $\mathcal{X}$ and the mean-squared error (with an $\ell_1$ regularization) as loss function. Note that the superposition relaxations with continuous piecewise-linear summands are initialized without extra breakpoints ($N_{\rm ini}=1$) in this case, as the current implementation in {\sf MC\textsuperscript{++}} can automatically add the relevant breakpoints to describe the maximum (ReLU) operation between a piecewise-linear function exactly and a constant exactly; that is, initializing the variables with extra breakpoints would not improve the relaxations. Figure~\ref{fig:CS3b_rate} again compares the convergence rates of various relaxations as the domain $\mathcal{X}$ is contracted around the global maximum of the ANN surrogate of \eqref{eq:CS3} at $\bm x^* \approx [-0.0150\ 1.6176]^\intercal$. The log-log convergence plots confirm the loss of quadratic convergence order both in the pointwise (left) and Hausdorff (right) sense, due to the nondifferentiability of the max operations. The superposition relaxations with piecewise-constant summands behave similarly and approach their piecewise-linear counterpart as the grid partition of the variables $x_1$ and $x_2$ is refined. Lastly, the estimation error of the McCormick relaxation is more than double that of the superposition relaxations on the full domain $\mathcal{X}$, a difference that first increases as the domain arod the ANN maximum is reduced, before starting to close in with the superposition relaxations below $\rho\lesssim 10^{-2}$.

\begin{table}[tb]
\caption{Comparison of computational times for superposition relaxations and McCormick relaxations for the test function \eqref{eq:CS3} and its ANN surrogate, with wall-times in microseconds.\textsuperscript{$\dagger$}}
\label{tab:CS3_walltime}
\small
\setlength\tabcolsep{0pt}
\begin{tabular*}{\linewidth}{@{\extracolsep{\fill}}lrr}
\toprule
Relaxation & Test function \eqref{eq:CS3} & ANN surrogate of \eqref{eq:CS3}\\
\midrule
McCormick                 &   1 µs &   37 µs\\
McCormick \& subgradient  &   3 µs &  181 µs\\[.35em]
Superposition PWC16       &  31 µs & 1390 µs\\
Superposition PWC32       &  52 µs & 1720 µs\\
Superposition PWC64       &  98 µs & 2550 µs\\
Superposition PWC128      & 193 µs & 4030 µs\\[.35em]
Superposition PWL1        &  16 µs & 5750 µs\\
Superposition PWL2        &  18 µs & --\\
Superposition PWL4        &  24 µs & --\\
Superposition PWL8        &  38 µs & --\\
\bottomrule
\end{tabular*}
\footnotesize\textsuperscript{$\dagger$}{Lenovo ThinkPad X1 Carbon Gen 10 with 12\textsuperscript{th} Gen Intel\textsuperscript{®} Core™ i7-1260P × 16, 32.0 GiB memory, Ubuntu 22.04 operating system}
\end{table}

A computational performance comparison between various relaxations is reported in Table~\ref{tab:CS3_walltime}. Overall, relaxing the ANN surrogate of \eqref{eq:CS3} is far more computationally demanding than relaxing the test function \eqref{eq:CS3} itself as the ANN entails a much greater operation count. The McCormick relaxations are also significantly faster to compute than any of the superposition relaxations, even when subgradients of the McCormick relaxations are propagated. Although it is worth recalling that a McCormick relaxation computes the convex underestimator and concave overestimator values as well as the respective subgradients at one point $\bm x\in\mathcal{X}$ only -- here the global maximum -- while a superposition relaxation computes parameterized univariate summands for the superposition under- and overestimator that remain valid over the whole domain $\mathcal{X}$. 

For the test function \eqref{eq:CS3}, the superposition relaxations with piecewise-linear (PWL) summands are faster to compute than their piecewise-constant (PWC) counterparts. For instance, the piecewise-linear summands in a superposition relaxation where the variables $x_1$ and $x_2$ are partitioned into 8 segments (PWL8) comprise between 37--57 segments, but this relaxation is still faster to compute than with piecewise-constant summands on an equipartition with 32 segments (PWC32), thus confirming that adaptive partitioning can offer computational benefits. The performance for the relaxation of the ANN surrogate shows a different trend. Even with a partition size of 128 the superposition relaxations with piecewise-constant summands (PWC128) is faster to compute than its piecewise-linear counterpart (PWC1), and this despite the under- and overestimator summands comprising few segments, between 74--81. In future work it would be relevant, therefore, to implement segment-reduction strategies for the piecewise-linear summands in order to compute superposition relaxations more efficiently. 



More broadly, a natural direction for future research lies in systematically investigating the performance of superposition arithmetic in global optimization algorithms, particularly within spatial branch-and-bound frameworks and reduced-space problem formulations \citep{Eperly1997,Bongartz2019}. In this context, it is crucial to evaluate how the tighter relaxations affect node pruning efficiency, bound tightening, and convergence behavior when compared to McCormick relaxations and other state-of-the-art set arithmetics. Emphasis should also be placed on high-dimensional optimization problems, where the separable structure of superposition relaxations may alleviate some aspects of the curse of dimensionality. Additionally, hybrid strategies that combine superposition relaxations with more computationally efficient relaxations in a problem-dependent and dynamic fashion may offer a promising path to balancing relaxation quality against computational overhead.

\section{Conclusion and Future Work}

The superposition relaxation arithmetic introduced through this paper constructs separable under- and overestimators that bracket the graph of a multivariate factorable function over compact domains. Maintaining separability of the estimators enables efficient range computation and estimator propagation with linear complexity in the number of variables, while still capturing global nonlinearities and nonconvexities in the original function. A key technical contribution is the ability to propagate these separable estimators through compositions with general nonlinear univariate terms by exploiting ridge function structures in combination with global monotonicity and convexity properties. We established that superposition relaxations can preserve quadratic convergence in the pointwise and Hausdorff sense under mild regularity assumptions, e.g. continuous-differentiability with Lipschitz continuous derivatives of outer functions. We furthermore introduced parameterizations of the univariate summands, either as piecewise-constant or continuous piecewise-linear functions, that are computationally tractable, preserve affine invariance, and are uniformly approximating. Our numerical experiments show that, in comparison to McCormick relaxations, superposition relaxations consistently produce tighter enclosures, even in challenging cases without clear separable structures in the estimated function or with a large number of operations as in artificial neural networks. However, this improved tightness comes at a higher computational cost, primarily due to the increased parameterization complexity. Future research, therefore, needs to conduct a systematic investigation of the performance of superposition relaxations in global optimization, for instance as part of a spatial branch-and-bound algorithm. Finally, while this work focused on superposition arithmetic with separable under- and overestimators, it will be interesting to investigate higher-order superposition relaxations, where the summands depend on two or more variables to further reduce the relaxation gap.

\backmatter

\bmhead{Data Statement}

An implementation of superposition arithmetic is available through the C\textsuperscript{++} library {\sf MC\textsuperscript{++}} and its Python-binding interface {\sf PyMC\textsuperscript{++}} under \url{https://github.com/omega-icl/mcpp/tree/version-5.0} \citep{MCPP}. {\sf PyMC\textsuperscript{++}} is available as a standalone package on {\sf PyPI} under \url{https://pypi.org/project/pymcpp/}. This code is used in the computational studies in Section~\ref{sec:casestudies}.

\bmhead{Funding Declaration}

This research was supported by the Engineering and Physical Sciences Research Council (EPSRC) of UK Research and Innovation (UKRI) under grant EP/W003317/1.

\bmhead{Acknowledgements} 

The authors are grateful to the handling editor and to the two anonymous reviewers for their careful reading of the manuscript and for their constructive comments.

\begin{appendices}

\section{Are the Superposition Relaxations of Convex Ridge Functions Optimal in Some Sense?}\label{app:optimality}

The superposition relaxation constructions for the ridge function \eqref{eq:cvxridge} with a convex profile function $\varphi$ were established in Propositions~\ref{prop:cvxridge_over} and \ref{prop:cvxridge_under}, but it remains unclear whether the summands \eqref{eq:monotonic_underest}, \eqref{eq:monotonic_overest} and \eqref{eq:nonmonotonic_underest} are optimal in some sense. In this appendix, we conduct an investigation using numerical optimization.

Starting with the superposition underestimator $f^{\rm u}$, we consider the $L_p$ error minimization problem 
\begin{align}\label{eq:fuopt}
\min_{f^{\rm u}_i\in\mathcal{C}^0(X_i)} & \int_{\bm X} \left| \varphi\left(\sum_{i=1}^n x_i\right) - \sum_{i=1}^n f_i^{\rm u}(x_i) \right|^p d{\bm x}\\
\text{s.t.}\ \ & \sum_{i=1}^n f_i^{\rm u}(x_i) \leq \varphi\left(\sum_{i=1}^n x_i\right), \quad \forall{\bm x}\in{\bm X}\nonumber\\
& \sum_{i=1}^n f_i^{\rm u}(x_i) \geq \min_{{\bm x}\in{\bm X}} \varphi\left(\sum_{i=1}^n x_i\right), \quad \forall{\bm x}\in{\bm X},\nonumber
\end{align}
where the constraints guarantee that the underestimator is valid and tracks the minimum of the ridge function. For simplicity, we discretize this problem on a grid with $N$ points along each coordinate,
\begin{align}\label{eq:fuopt_discr}
\min_{f^{\rm u}_{i,{\bm\kappa}}} & \sum_{{\bm\kappa}\in\{1,\ldots,N\}^n} \frac{1}{N^{\frac{n}{p}}}\left| \varphi\left(\sum_{i=1}^n x_{i,\kappa_i}\right) - \sum_{i=1}^n f_{i,\kappa_i}^{\rm u} \right|^p\\
\text{s.t.}\ \ & \sum_{i=1}^n f_{i,\kappa_i}^{\rm u} \leq \sum_{{\bm\kappa}\in\{1,\ldots,N\}^n} \varphi\left(\sum_{i=1}^n x_{i,\kappa_i}\right),\quad \forall i\in\{1,\ldots,n\},\ {\bm\kappa}\in\{1,\ldots,N\}^n\nonumber\\
& \sum_{i=1}^n f_{i,\kappa_i}^{\rm u} \geq \min_{{\bm x}\in{\bm X}} \varphi\left(\sum_{i=1}^n x_i\right),\quad \forall{\bm\kappa}\in\{1,\ldots,N\}^n.\nonumber
\end{align}
We solve the convex NLP \eqref{eq:fuopt_discr} using {\sf Gurobi} solver from {\sf GAMS} (v49.6.0). The results obtained for selected convex profile functions $\varphi$, nondecreasing on various 2-d domains $\bm X$, are summarized in Table~\ref{tab:fuopt}. The focus is on the $L_1$ and $L_2$ errors for their computational tractability. The $L_\infty$ error was also considered but these results are not reported in Table~\ref{tab:fuopt} as the corresponding NLP \eqref{eq:fuopt_discr} exhibits multiple global optima.

In all the cases in Table~\ref{tab:fuopt}, the minimal (discretized) $L_1$ and $L_2$ errors for these optimal underestimators are identical, within numerical accuracy, to the (discretized) $L_1$ and $L_2$ errors of the superposition underestimator $f^{\rm u}$ with summands as in \eqref{eq:monotonic_underest}. This comparison suggests that the summands \eqref{eq:monotonic_underest} may indeed provide an optimal underestimator, both in the $L_1$ and $L_2$ sense, among all possible separable underestimators of the convex increasing ridge function \eqref{eq:cvxridge} that track its minimum exactly. In general, tighter separable underestimators could be obtained by omitting the second constraint in \eqref{eq:fuopt_discr}, although these underestimators would then lose the minimum-tracking property; see Remark~\ref{rmk:cvxridge_under}. A similar comparison is given in Table~\ref{tab:fuopt2} for selected nonmonotonic convex profile functions $\varphi$ on $\bm X$. The results suggest that the univariate summands \eqref{eq:nonmonotonic_underest}, constructed from the decomposition \eqref{eq:monotonic_decomp} to maintain their minimum-tracking property, may no longer provide optimal underestimators in the $L_1$ or $L_2$ sense, since (slightly) tighter separable underestimators can be computed from solving \eqref{eq:fuopt_discr}. 

\begin{table}[tb]
\caption{Comparison between the minimal (discretized) $L_1$ and $L_2$ errors computed via \eqref{eq:fuopt_discr} and those of the superposition underestimator constructed via \eqref{eq:monotonic_underest} for a convex nondecreasing profile function $\varphi$. The discretization uses $N=120$ equidistant grid points for each coordinate.}
\label{tab:fuopt}
\small
\setlength\tabcolsep{0pt}
\begin{tabular*}{\linewidth}{@{\extracolsep{\fill}}llrrrr}
\toprule
Profile,          & Domain,                  & \multicolumn{2}{c}{$L_1$ error}     & \multicolumn{2}{c}{$L_2$ error}\\
\cmidrule{3-6}
$\varphi(z)$      & $\bm X$                  & Eq.~\eqref{eq:fuopt_discr} & Eq.~\eqref{eq:monotonic_underest} & Eq.~\eqref{eq:fuopt_discr} & Eq.~\eqref{eq:monotonic_underest} \\
\midrule
$z^2$        & $[0.4,2]^2$                   &  1.280000 & {\color{Green}1.280000} & 1.713778 & {\color{Green}1.713778}\\
$z^2$        & $[0,1]\times[0.4,2]$          &  0.800000 & {\color{Green}0.800000} & 1.071111 & {\color{Green}1.071111}\\
$\exp(z)$    & $[-1,1]^2$                    &  0.656730 & {\color{Green}0.656730} & 1.098116 & {\color{Green}1.098116}\\
$\exp(z)$    & $[-1,0]\times[1,3]$           &  1.584865 & {\color{Green}1.584865} & 2.489730 & {\color{Green}2.489730}\\
$-\sqrt{-z}$ & $[-1,-0.2]^2$                 &  0.024369 & {\color{Green}0.024369} & 0.035808 & {\color{Green}0.035808}\\
$-\sqrt{-z}$ & $[-2,-0.5]\times[-1,-0.2]$    &  0.024258 & {\color{Green}0.024258} & 0.035436 & {\color{Green}0.035436}\\
$-\log(-z)$  & $[-0.8,-0.2]\times[-2,-1]$    &  0.030759 & {\color{Green}0.030759} & 0.044287 & {\color{Green}0.044287}\\
$-\log(-z)$  & $[-1,-0.05]\times[-0.7,-0.2]$ &  0.096174 & {\color{Green}0.096174} & 0.151491 & {\color{Green}0.151491}\\
\bottomrule
\end{tabular*}
\end{table}

\begin{table}[tb]
\caption{Comparison between the minimal (discretized) $L_1$ and $L_2$ errors computed via \eqref{eq:fuopt_discr} and those of the superposition underestimator constructed via \eqref{eq:nonmonotonic_underest} for a convex nonmonotonic profile function $\varphi$. The discretization uses $N=120$ equidistant grid points for each coordinate.}
\label{tab:fuopt2}
\small
\setlength\tabcolsep{0pt}
\begin{tabular*}{\linewidth}{@{\extracolsep{\fill}}llrrrr}
\toprule
Profile,          & Domain,                  & \multicolumn{2}{c}{$L_1$ error}     & \multicolumn{2}{c}{$L_2$ error}\\
\cmidrule{3-6}
$\varphi(z)$      & $\bm X$                  & Eq.~\eqref{eq:fuopt_discr} & Eq.~\eqref{eq:nonmonotonic_underest} & Eq.~\eqref{eq:fuopt_discr} & Eq.~\eqref{eq:nonmonotonic_underest} \\
\midrule
$z^2$        & $[-1,2]^2$                &  2.296281 & {\color{Green}2.296281} & 3.632748 & {\color{Green}3.632748}\\
$z^2$        & $[-1,2]\times[0.5,2]$     &  2.043388 & {\color{Red}2.043414}   & 2.832724 & {\color{Red}2.832719} \\
$|z|$        & $[-1,2]^2$                &  0.963595 & {\color{Green}0.963595} & 1.156356 & {\color{Green}1.156356} \\
$|z|$        & $[-1,2]\times[0.5,2]$     &  0.379995 & {\color{Red}0.382061}   & 0.414386 & {\color{Red}0.415531} \\
$\exp(z)-z$  & $[-0.5,1]^2$              &  0.455369 & {\color{Green}0.455369} & 0.814373 & {\color{Green}0.814373} \\
$\exp(z)-z$  & $[-0.5,1]\times[-1,-0.5]$ &  0.084208 & {\color{Red}0.084214}   & 0.119151 & {\color{Red}0.119155} \\
\bottomrule
\end{tabular*}
\end{table}

Similarly for the superposition overestimator $f^{\rm o}$ with summands as in \eqref{eq:monotonic_overest}, we consider the $L_p$ error minimization problem 
\begin{align}\label{eq:foopt}
\min_{f^{\rm u}_i\in\mathcal{C}^0(X_i)} & \int_{\bm X} \left| \varphi\left(\sum_{i=1}^n x_i\right) - \sum_{i=1}^n f_i^{\rm o}(x_i) \right|^p d{\bm x}\\
\text{s.t.}\ \ & \sum_{i=1}^n f_i^{\rm o}(x_i) \geq \varphi\left(\sum_{i=1}^n x_i\right), \quad \forall{\bm x}\in{\bm X},\nonumber
\end{align}
which we again discretize on a grid with $N$ points along each coordinate,
\begin{align}\label{eq:foopt_discr}
\min_{f^{\rm o}_{i,{\bm\kappa}}} & \sum_{{\bm\kappa}\in\{1,\ldots,N\}^n} \frac{1}{N^{\frac{n}{p}}}\left| \varphi\left(\sum_{i=1}^n x_{i,\kappa_i}\right) - \sum_{i=1}^n f_{i,\kappa_i}^{\rm o} \right|^p\\
\text{s.t.}\ \ & \sum_{i=1}^n f_{i,\kappa_i}^{\rm o} \geq \sum_{{\bm\kappa}\in\{1,\ldots,N\}^n} \varphi\left(\sum_{i=1}^n x_{i,\kappa_i}\right),\quad \forall i\in\{1,\ldots,n\},\ {\bm\kappa}\in\{1,\ldots,N\}^n.\nonumber
\end{align}
Notice that we do not include a constraint for the overestimator to track the maximum of the ridge function exactly, as the optimal overestimators computed via \eqref{eq:foopt_discr} seem to automatically satisfy this condition with a convex profile function $\varphi$. Table~\ref{tab:foopt} summarizes the results obtained by solving the convex NLP \eqref{eq:foopt_discr} using {\sf Gurobi} for selected convex profile functions $\varphi$, that are either nondecreasing or nonmonotonic on various 2-d domains $\bm X$. In all the cases, the (discretized) $L_1$ error of the superposition overestimator $f^{\rm o}$ constructed with the univariate summands \eqref{eq:monotonic_overest} matches, within numerical precision, the minimal $L_1$ error for the optimal separable overestimator computed via \eqref{eq:foopt_discr}. This comparison thus suggests that the summands \eqref{eq:monotonic_overest} may indeed determine an optimal separable overestimator in the $L_1$ sense when $\varphi$ is convex. In contrast, the summands \eqref{eq:monotonic_overest} may not be optimal in the $L_2$ sense, as separable overestimators with a lower $L_2$ discretization error can be computed from solving \eqref{eq:foopt_discr} in general. 

\begin{table}[tb]
\caption{Comparison between the minimal (discretized) $L_1$ and $L_2$ errors computed via \eqref{eq:foopt_discr} and those of the superposition overestimator constructed via \eqref{eq:monotonic_overest} for a convex nondecreasing (top part) or nonmonotonic (bottom part) profile function $\varphi$. The discretization uses $N=120$ equidistant grid points for each coordinate.}
\label{tab:foopt}
\small
\setlength\tabcolsep{0pt}
\begin{tabular*}{\linewidth}{@{\extracolsep{\fill}}llrrrr}
\toprule
Profile,           & Domain,                 & \multicolumn{2}{c}{$L_1$ error} & \multicolumn{2}{c}{$L_2$ error}\\
\cmidrule{3-6}
$\varphi(z)$       & ${\bm X}$               & Eq.~\eqref{eq:foopt_discr} & Eq.~\eqref{eq:monotonic_overest} & Eq.~\eqref{eq:foopt_discr} & Eq.~\eqref{eq:monotonic_overest} \\
\midrule
$z^2$        & $[0.4,2]^2$                   &  0.433778 & {\color{Green}0.433778} & 0.671994 & {\color{Green}0.671994}\\
$z^2$        & $[0,1]\times[0.4,2]$          &  0.271111 & {\color{Green}0.271111} & 0.419996 & {\color{Green}0.419996}\\
$\exp(z)$    & $[-1,1]^2$                    &  0.441386 & {\color{Green}0.441386} & 0.719049 & {\color{Green}0.719049}\\
$\exp(z)$    & $[-1,0]\times[1,3]$           &  0.889541 & {\color{Green}0.889541} & 1.426867 & {\color{Red}1.432228}\\
$-\sqrt{-z}$ & $[-1,-0.2]^2$                 &  0.011773 & {\color{Green}0.011773} & 0.018510 & {\color{Green}0.018510}\\
$-\sqrt{-z}$ & $[-2,-0.5]\times[-1,-0.2]$    &  0.011362 & {\color{Green}0.011362} & 0.017874 & {\color{Red}0.017903}\\
$-\log(-z)$  & $[-0.8,-0.2]\times[-2,-1]$    &  0.013645 & {\color{Green}0.013645} & 0.021342 & {\color{Red}0.021358}\\
$-\log(-z)$  & $[-1,-0.05]\times[-0.7,-0.2]$ &  0.057165 & {\color{Green}0.057165} & 0.092980 & {\color{Red}0.093423}\\
\midrule
$z^2$        & $[-1,2]^2$                    &  1.525000 & {\color{Green}1.525000} & 2.362479 & {\color{Green}2.362479} \\
$z^2$        & $[-1,2]\times[0.5,2]$         &  0.762600 & {\color{Green}0.762600} & 1.181240 & {\color{Green}1.181240} \\
$|z|$        & $[-1,2]^2$                    &  0.375248 & {\color{Green}0.375248} & 0.674995 & {\color{Green}0.674995} \\
$|z|$        & $[-1,2]\times[0.5,2]$         &  0.049138 & {\color{Green}0.049138} & 0.129905 & {\color{Red}0.133978} \\
$\exp(z)-z$  & $[-0.5,1]\times[-0.5,1]$      &  0.365248 & {\color{Green}0.365248} & 0.582778 & {\color{Green}0.582778} \\
$\exp(z)-z$  & $[-0.5,1]\times[-1,-0.5]$     &  0.041564 & {\color{Green}0.041564} & 0.065906 & {\color{Red}0.065647} \\
\bottomrule
\end{tabular*}
\end{table}





\end{appendices}

\bibliography{revision2}

@Book{		  horst1996,
  author	= "Horst, Reiner and Tuy, Hoang",
  title		= "Global Optimization: Deterministic Approaches",
  year		= "1996",
  publisher	= "Springer",
  address	= "Berlin, Heidelberg",
  doi		= "10.1007/978-3-662-03199-5"
}

@Book{		  tawarmalani2002,
  author	= "Tawarmalani, Mohit and Sahinidis, Nikolaos V.",
  title		= "Convexification and Global Optimization in Continuous and
		  Mixed-Integer Nonlinear Programming: Theory, Algorithms,
		  Software, and Applications",
  year		= "2002",
  publisher	= "Springer",
  address	= "Boston, MA",
  doi		= "10.1007/978-1-4757-3532-1"
}

@Article{	  vigerske2017,
  title		= {{SCIP}: {G}lobal optimization of mixed-integer nonlinear
		  programs in a branch-and-cut framework},
  author	= {Vigerske, Stefan and Gleixner, Ambros},
  volume	= {33},
  doi		= {10.1080/10556788.2017.1335312},
  number	= {3},
  journal	= {Optimization Methods \& Software},
  year		= {2017},
  pages		= {563-593},
  language	= {en}
}

@Article{	  tawarmalani2005,
  author	= "Tawarmalani, Mohit and Sahinidis, Nikolaos V.",
  year		= "2005",
  title		= "A polyhedral branch-and-cut approach to global
		  optimization",
  journal	= "Mathematical Programming",
  volume	= "103",
  issue		= "2",
  pages		= "225--249",
  doi		= "10.1007/s10107-005-0581-8"
}

@Article{	  singer2006,
  author	= {Singer, Adam B. and Barton, Paul I.},
  title		= {Bounding the Solutions of Parameter Dependent Nonlinear
		  Ordinary Differential Equations},
  journal	= {SIAM Journal on Scientific Computing},
  volume	= {27},
  number	= {6},
  pages		= {2167-2182},
  year		= {2006},
  doi		= {10.1137/040604388}
}

@InCollection{	  scott2015,
  author	= "Scott, Joseph K. and Barton, Paul I.",
  editor	= "Ilchmann, Achim and Reis, Timo",
  title		= "{Reachability Analysis and Deterministic Global
		  Optimization of DAE Models}",
  booktitle	= "Surveys in Differential-Algebraic Equations III",
  year		= "2015",
  publisher	= "Springer International Publishing",
  address	= "Cham",
  pages		= "61--116",
  doi		= "10.1007/978-3-319-22428-2_2"
}

@Article{	  stuber2015,
  author	= {Stuber, Matthew D. and Scott, Joseph K. and Barton, Paul
		  I.},
  title		= {Convex and concave relaxations of implicit functions},
  journal	= {Optimization Methods \& Software},
  volume	= {30},
  number	= {3},
  pages		= {424--460},
  year		= {2015},
  doi		= {10.1080/10556788.2014.924514}
}

@Article{	  wilhelm2022,
  author	= {Wilhelm, M. E. and Stuber, M. D.},
  title		= {{EAGO.jl: easy advanced global optimization in Julia}},
  journal	= {Optimization Methods \& Software},
  volume	= {37},
  number	= {2},
  pages		= {425--450},
  year		= {2022},
  doi		= {10.1080/10556788.2020.1786566}
}

@Misc{		  maingo,
  author	= {Bongartz, D. and Najman, J. and Sass, S. and Mitsos, A.},
  title		= {{MAiNGO -- McCormick-based Algorithm for mixed-integer
		  Nonlinear Global Optimization, Technical Report}},
  year		= 2018,
  url		= "http://permalink.avt.rwth-aachen.de/?id=729717"
}

@Misc{		  mcpp,
  author	= {Chachuat, B. and {Omega Research Group}},
  title = {{MC++ Toolkit for Construction, Manipulation and Evaluation of Factorable Functions (version 5.0)}},
  year		= 2026,
  url = "https://github.com/omega-icl/mcpp/tree/version-5.0"
}

@Article{	  bongartz2019,
  author	= {Bongartz, Dominik and Mitsos, Alexander},
  title		= {Deterministic global flowsheet optimization: {B}etween
		  equation-oriented and sequential-modular methods},
  journal	= {AIChE Journal},
  volume	= {65},
  number	= {3},
  pages		= {1022-1034},
  doi		= {10.1002/aic.16507},
  year		= {2019}
}

@Article{	  burre2023,
  author	= {Burre, J. and Bongartz, D. and Mitsos, A.},
  title		= {Comparison of {MINLP} formulations for global
		  superstructure optimization},
  journal	= {Optimization \& Engineering},
  volume	= {24},
  pages		= {801–830},
  doi		= {10.1007/s11081-021-09707-y},
  year		= {2023}
}

@Misc{		  gurobi,
  author	= {{Gurobi Optimization, LLC}},
  title		= {{Gurobi Optimizer Reference Manual}},
  year		= 2025,
  url		= "https://www.gurobi.com"
}

@Book{		  moore1979,
  author	= "Moore, R. E.",
  title		= "Methods and Applications of Interval Analysis",
  publisher	= "SIAM",
  address	= "Philadelphia, PA",
  year		= 1979,
  doi		= {10.1137/1.9781611970906}
}

@Article{	  tsoukalas2014,
  author	= {Tsoukalas, A. and Mitsos, A.},
  year		= {2014},
  title		= {Multivariate {McCormick} relaxations},
  journal	= {Journal of Global Optimization},
  volume	= {59},
  number	= {2},
  pages		= {633-662},
  doi		= {10.1007/s10898-014-0176-0}
}

@Article{	  rote1992,
  author	= {Rote, G.},
  year		= {1992},
  title		= {The convergence rate of the sandwich algorithm for
		  approximating convex functions},
  journal	= {Computing},
  volume	= {48},
  pages		= {337-361},
  doi		= {10.1007/BF02238642}
}

@Article{	  su2019,
  author	= {Su, Junyan and Zha, Yanlin and Wang, Kai and Villanueva,
		  Mario E. and Paulen, Radoslav and Houska, Boris},
  title		= {Interval Superposition Arithmetic for Guaranteed Parameter
		  Estimation},
  journal	= {IFAC-PapersOnLine},
  volume	= {52},
  number	= {1},
  pages		= {574-579},
  year		= {2019},
  doi		= {10.1016/j.ifacol.2019.06.124}
}

@Misc{		  zha2018,
  title		= {Interval superposition arithmetic},
  author	= {Zha, Yanlin and Villanueva, Mario E. and Houska, Boris},
  year		= {2018},
  eprint	= {1610.05862},
  archiveprefix	= {arXiv},
  primaryclass	= {math.NA},
  url		= {https://arxiv.org/abs/1610.05862}
}

@Misc{		  driscoll2014,
  author	= {Driscoll, T. A and Hale, N. and Trefethen, L. N.},
  optpublisher	= {Pafnuty Publications},
  title		= {Chebfun Guide},
  optaddress	= "Oxford, UK",
  url		= {http://www.chebfun.org/docs/guide/},
  year		= {2014}
}

@Book{		  hiriart2001,
  author	= "Hiriart-Urruty, J.-B. and Lemar\'echal, C.",
  title		= "Fundamentals of Convex Analysis",
  publisher	= "Springer-Verlag",
  address	= "Berlin, Germany",
  year		= 2011,
  doi		= {10.1007/978-3-642-56468-0}
}

@Article{	  naser2025,
  author	= {Naser, M. Z. and Al-Bashiti, Mohammad Khaled and Tapeh,
		  Arash Teymori Gharah and Naser, Ahmad and Kodur, Venkatesh
		  and Hawileh, Rami and Abdalla, Jamal and Khodadadi, Nima
		  and Gandomi, Amir H. and Eslamlou, Armin Dadras},
  title		= {A Review of Benchmark and Test Functions for Global
		  Optimization Algorithms and Metaheuristics},
  journal	= {WIREs Computational Statistics},
  volume	= {17},
  number	= {2},
  pages		= {e70028},
  doi		= {10.1002/wics.70028},
  year		= {2025}
}

@Article{	  alefeld2000,
  author	= {Alefeld, G. and Mayer, G.},
  title		= {Interval analysis: {T}heory and applications},
  journal	= "Journal of Computational \& Applied Mathematics",
  year		= 2000,
  volume	= 121,
  pages		= "421-464",
  doi		= {10.1016/S0377-0427(00)00342-3}
}

@Article{	  wechsung2014,
  author	= "Wechsung, Achim and Schaber, Spencer D. and Barton, Paul
		  I.",
  title		= "The Cluster Problem Revisited",
  journal	= {Journal of Global Optimization},
  volume	= 58,
  number	= 3,
  pages		= "429-438",
  year		= 2014,
  doi		= {10.1007/s10898-013-0059-9}
}

@Article{	  bompadre2012,
  author	= "Bompadre, A. and Mitsos, A.",
  title		= "Convergence Rate of {McCormick} Relaxations",
  journal	= {Journal of Global Optimization},
  volume	= 52,
  number	= 1,
  pages		= "1-28",
  year		= 2012,
  doi		= {10.1007/s10898-011-9685-2}
}

@Article{	  najman2019,
  title		= {Convex relaxations of componentwise convex functions},
  journal	= {Computers \& Chemical Engineering},
  volume	= {130},
  pages		= {106527},
  year		= {2019},
  issn		= {0098-1354},
  doi		= {10.1016/j.compchemeng.2019.106527},
  author	= {Najman, Jaromił and Bongartz, Dominik and Mitsos,
		  Alexander}
}

@Article{	  sherali2012,
  author	= {Sherali, H. D. and Dalkiran, E. and Liberti, L.},
  title		= {Reduced {RLT} representations for nonconvex polynomial
		  programming problems},
  journal	= {Journal of Global Optimization},
  volume	= {52},
  pages		= {447–469},
  year		= {2012},
  doi		= {10.1007/s10898-011-9757-3}
}

@Article{	  misener2012,
  author	= {Misener, R. and Floudas, C. A.},
  title		= {Global optimization of mixed-integer
		  quadratically-constrained quadratic programs ({MIQCQP})
		  through piecewise-linear and edge-concave relaxations},
  journal	= {Mathematical Programming},
  volume	= {136},
  pages		= {155-182},
  year		= {2012},
  doi		= {10.1007/s10107-012-0555-6}
}

@Article{	  lasserre2001,
  title		= {Global optimization with polynomials and the problem of
		  moments},
  author	= {Lasserre, Jean B},
  journal	= {SIAM Journal on Optimization},
  volume	= {11},
  number	= {3},
  pages		= {796--817},
  year		= {2001},
  publisher	= {SIAM},
  doi		= {10.1137/S1052623400366802}
}

@InBook{	  sherali1999,
  author	= "Sherali, Hanif D. and Adams, Warren P.",
  title		= "RLT Hierarchy for General Discrete Mixed-Integer Problems",
  booktitle	= "A Reformulation-Linearization Technique for Solving
		  Discrete and Continuous Nonconvex Problems",
  year		= "1999",
  publisher	= "Springer",
  address	= "Boston, MA",
  pages		= "103--129",
  doi		= "10.1007/978-1-4757-4388-3_4"
}

@Article{	  ahmadi2019,
  author	= {Ahmadi, Amir Ali and Majumdar, Anirudha},
  title		= {{DSOS} and {SDSOS} Optimization: {M}ore Tractable
		  Alternatives to Sum of Squares and Semidefinite
		  Optimization},
  journal	= {SIAM Journal on Applied Algebra and Geometry},
  volume	= {3},
  number	= {2},
  pages		= {193-230},
  year		= {2019},
  doi		= {10.1137/18M118935X}
}

@Article{	  parrilo2003,
  title		= {Semidefinite programming relaxations for semialgebraic
		  problems},
  author	= {Parrilo, P.},
  journal	= {Mathematical Programming},
  volume	= {96},
  pages		= {293-320},
  year		= {2015},
  publisher	= {Springer},
  doi		= {10.1007/s10107-003-0387-5}
}

@Article{	  bao2015,
  title		= {Global optimization of nonconvex problems with multilinear
		  intermediates},
  author	= {Bao, Xiaowei and Khajavirad, Aida and Sahinidis, Nikolaos
		  V and Tawarmalani, Mohit},
  journal	= {Mathematical Programming Computation},
  volume	= {7},
  number	= {1},
  pages		= {1--37},
  year		= {2015},
  publisher	= {Springer},
  doi		= {10.1007/s12532-014-0073-z}
}

@Article{	  zorn2014,
  title		= {Global optimization of general nonconvex problems with
		  intermediate polynomial substructures},
  author	= {Zorn, Keith and Sahinidis, Nikolaos V},
  journal	= {Journal of Global Optimization},
  volume	= {59},
  number	= {2-3},
  pages		= {673--693},
  year		= {2014},
  publisher	= {Springer},
  doi		= {10.1007/s10898-014-0190-2}
}

@Article{	  fourer2010,
  author	= {Fourer, Robert and Maheshwari, Chandrakant and Neumaier,
		  Arnold and Orban, Dominique and Schichl, Hermann},
  title		= {Convexity and Concavity Detection in Computational Graphs:
		  Tree Walks for Convexity Assessment},
  journal	= {INFORMS Journal on Computing},
  volume	= {22},
  number	= {1},
  pages		= {26-43},
  year		= {2010},
  doi		= {10.1287/ijoc.1090.0321}
}

@Article{	  rikun1997,
  author	= {Rikun, A. D.},
  title		= {A convex envelope formula for multilinear functions},
  journal	= {Journal of Global Optimization},
  volume	= {10},
  number	= {4},
  pages		= {425–437},
  year		= {1997},
  doi		= {10.1023/A:1008217604285}
}

@Article{	  belotti2009,
  author	= {Pietro Belotti and Jon Lee and Leo Liberti and François
		  Margot and Andreas Wächter},
  title		= {Branching and bounds tightening techniques for non-convex
		  {MINLP}},
  journal	= {Optimization Methods \& Software},
  volume	= {24},
  number	= {4-5},
  pages		= {597--634},
  year		= {2009},
  doi		= {10.1080/10556780903087124}
}

@Article{	  gleixner2017,
  author	= "Gleixner, A.M. and Berthold, T. and Müller, B. and
		  Weltge, S.",
  title		= "Three enhancements for optimization-based bound
		  tightening",
  journal	= {Journal of Global Optimization},
  volume	= 67,
  pages		= "731–757",
  year		= 2017,
  doi		= {10.1007/s10898-016-0450-4}
}

@Article{	  eperly1997,
  author	= "Epperly, T.G.W. and Pistikopoulos, E.N.",
  title		= "A Reduced Space Branch and Bound Algorithm for Global
		  optimization",
  journal	= {Journal of Global Optimization},
  volume	= 11,
  pages		= "287-311",
  year		= 1997,
  doi		= {10.1023/A:1008212418949}
}

@Article{	  logan1975,
  author	= {Logan, B. F. and Shepp, L. A.},
  title		= {Optimal reconstruction of a function from its
		  projections},
  volume	= {42},
  journal	= {Duke Mathematical Journal},
  number	= {4},
  publisher	= {Duke University Press},
  pages		= {645--659},
  year		= {1975},
  doi		= {10.1215/S0012-7094-75-04256-8}
}

@Article{	  schweidtmann2019,
  author	= {Schweidtmann, A.M. and Mitsos, A.},
  title		= {Deterministic Global Optimization with Artificial Neural
		  Networks Embedded},
  volume	= {180},
  journal	= {Journal of Optimization Theory \& Applications},
  pages		= {925--948},
  year		= {2019},
  doi		= {10.1007/s10957-018-1396-0}
}

@Article{	  horst1999,
  author	= {Horst, R. and Thoai, N. V.},
  title		= {{DC} Programming: {O}verview},
  volume	= {103},
  journal	= {Journal of Optimization Theory \& Applications},
  pages		= {1--43},
  year		= {1999},
  doi		= {10.1023/A:1021765131316}
}

@Article{	  hartman1959,
  author	= {Hartman, P.},
  title		= {On Functions Representable as a Difference of Convex
		  Functions},
  volume	= {9},
  journal	= {Pacific Journal of Mathematics},
  pages		= {707--713},
  year		= {1959},
  doi		= {10.2140/pjm.1959.9.707}
}

@Book{		  pinkus2015,
  address	= {Cambridge, UK},
  optseries	= {Cambridge Tracts in Mathematics},
  title		= {Ridge Functions},
  publisher	= {Cambridge University Press},
  author	= {Pinkus, Allan},
  year		= {2015},
  optcollection	= {Cambridge Tracts in Mathematics},
  doi		= {10.1017/CBO9781316408124}
}

@Article{	  kearfott2013,
  title		= {A general framework for convexity analysis in
		  deterministic global optimization},
  author	= {Kearfott, Ralph Baker and Castille, Jessie and Tyagi,
		  Gaurav},
  journal	= {Journal of Global Optimization},
  volume	= {56},
  number	= {3},
  pages		= {765--785},
  year		= {2013},
  doi		= {10.1007/s10898-012-990}
}

@Article{	  berz1997,
  author	= {Berz, Martin},
  title		= {From {T}aylor series to {T}aylor models},
  journal	= {AIP Conference Proceedings},
  volume	= {405},
  number	= {1},
  pages		= {1-23},
  year		= {1997},
  doi		= {10.1063/1.53493}
}

@Article{	  bompadre2013,
  author	= {A. Bompadre and A. Mitsos and B. Chachuat},
  year		= {2013},
  title		= {Convergence analysis of {T}aylor and
		  {M}c{C}ormick-{T}aylor models},
  journal	= {Journal of Global Optimization},
  volume	= {57},
  number	= {1},
  pages		= {75-114},
  doi		= {10.1007/s10898-012-9998-9}
}

@Article{	  du1994,
  author	= {K. S. Du and R. B. Kearfott},
  year		= {1994},
  title		= {The cluster problem in multivariate global optimization},
  journal	= {Journal of Global Optimization},
  volume	= {5},
  number	= {3},
  pages		= {253-265},
  doi		= {10.1007/BF01096455}
}

@Article{	  neumaier2004,
  author	= {A. Neumaier},
  year		= {2004},
  title		= {Complete search in continuous global optimization and
		  constraint satisfaction},
  journal	= {Acta Numerica},
  volume	= {13},
  pages		= {271-369},
  doi		= {10.1017/S0962492904000194}
}

@Article{	  mccormick1976,
  author	= {G. P. McCormick},
  year		= {1976},
  title		= {Computability of global solutions to factorable nonconvex
		  programs: Part {I} -- {C}onvex underestimating problems},
  journal	= {Mathematical Programming},
  volume	= {10},
  pages		= {147-175},
  doi		= {10.1007/BF01580665}
}

@Article{	  mitsos2009,
  author	= {A. Mitsos and B. Chachuat and P. I. Barton},
  year		= {2009},
  title		= {Mc{C}ormick-based relaxations of algorithms},
  journal	= {SIAM Journal on Optimization},
  volume	= {20},
  doi		= {10.1137/080717341},
  number	= {2},
  pages		= {573-601}
}

@Article{	  rajyaguru2017,
  author	= {J. Rajyaguru and M. E. Villanueva and B. Houksa and B.
		  Chachuat},
  year		= {2017},
  title		= {Chebyshev model arithmetic for factorable functions},
  journal	= {Journal of Global Optimization},
  volume	= {68},
  pages		= {413-438},
  doi		= {10.1007/s10898-016-0474-9}
}

@Book{		  ratschek1984,
  author	= {H. Ratschek and J. Rokne},
  year		= {1984},
  title		= {Computer Methods for the Range of Functions},
  series	= {Series in Mathematics and Its Applications},
  address	= {Chichester, UK},
  publisher	= {Ellis Horwood Ltd},
  doi		= {10.1002/zamm.19850650904}
}

@Article{	  trefethen2007,
  author	= {L. N. Trefethen},
  year		= {2007},
  title		= {Computing numerically with functions instead of numbers},
  journal	= {Mathematics in Computer Science},
  volume	= {1},
  pages		= {9-19},
  doi		= {10.1007/s11786-007-0001-y}
}

@Article{	  villanueva2015a,
  author	= {M. E. Villanueva and B. Houska and B. Chachuat},
  year		= {2015},
  title		= {Unified Framework for the Propagation of Continuous-Time
		  Enclosures for Parametric Nonlinear {ODE}s},
  journal	= {Journal of Global Optimization},
  volume	= {62},
  number	= {3},
  pages		= {575-613},
  doi		= {10.1007/s10898-014-0235-6}
}

\end{document}